\documentclass[11pt, generic, amscd,amssymb,verbatim]{amsart}

\newtheorem{thm}{Theorem}
\newtheorem{Cor}{Corollary}

\newtheorem{Prop}{Proposition}

\numberwithin{equation}{section}
\numberwithin{Cor}{Prop}
\numberwithin{Prop}{section}
\newtheorem{Rem}{Remark}
\numberwithin{Rem}{section}
\newtheorem{Def}{Definition}
\numberwithin{Def}{section}

\usepackage{ifpdf}
\usepackage{amssymb}
\usepackage{amsmath}
\textwidth15.8 cm
\voffset-2.3cm
\textheight25.5 cm
\oddsidemargin.4cm
\evensidemargin.4cm
\begin{document}
\title[Schwarzians as moments and the adapted complexification]{Higher order Schwarzians for geodesic flows, moment sequences, and  the radius of adapted complexifications}
\author{Raul M. Aguilar}
\maketitle
\centerline{Science and Mathematics Department}
\centerline{Massachusetts Maritime Academy}
\centerline{Buzzards Bay, MA, 02532}
\centerline{e-mail: raguilar@maritime.edu}
\centerline{March 24, 2011}
\section{Abstract}
In the first part of the paper, comprising section 1 through 6,
we introduce a sequence of
functions in the tangent bundle $TM$
 of any smooth
two-dimensional manifold $M$ with smooth Riemannian metric
$g$
that correspond to the higher order Schwarzians
of the linearized geodesic flow. With these functions and a classical theorem of Loewner on analytic continuation we are able to characterize  the existence of the adapted
complex structure induced by $g$ on the set $T^RM$ of vectors in $TM$ of length up to $R$, equivalently for $M$ compact, to the existence of a Grauert tube of radius $R$ in terms of infinite Hankel matrices involving these Schwarzian functions.
The basic characterization so obtained can be expressed
as a sequence of differential inequalities of increasing order
polynomial in the covariant
derivatives of the Gauss curvature  on $M$ and in $\pi\,R^{-1}$ that
should be regarded as  the higher order versions of
a curvature  inequality by L. Lempert and  R. Sz\H{o}ke.

The second part of the paper, sections 7 through 11, includes a discussion of the rank of the infinite Hankel matrix of the Schwarzians from part 1
and of new Schwarzians defined now for purely imaginary radius,  as well as some computations and examples. A characterization of the existence of the adapted structure on $T^RM$
in terms of moment sequences with parameters $R$ and $\mathrm{v}$ in $TM$ is also noted.
\tableofcontents
\section{Introduction and description of main results}
Let $M$ be a two-dimensional real analytic complete  Riemannian manifold with
metric $g$ and Gauss curvature $\sigma$, and let its tangent bundle be denoted by  $\pi\colon TM\to M$\,.

L. Lempert and R. Sz\H{o}ke \cite{LESZ} defined
for any real analytic metric $g$
the \emph{adapted complex structure}, which  always exists in some neighborhood of the zero section $M\subset TM$.
An equivalent construction is defined in the cotangent bundle by V. Guillemin and M. Stenzel \cite{GLLST}
(Please see beginning of Section
\ref{SECTION-GT} for definitions).

A prototype for these structures is found in the study of the geometry of certain foliations in complexifications of symmetric spaces of rank one by G. Patrizio and P.M. Wong \cite{PW}, which in turn was motivated
by the work on the complex homogeneous Monge-Amp\'{e}re equation by D. Burns \cite{BurnsMA} and that of W. Stoll \cite{STOLL}.

There is current interest in the adapted complex structure,
and from various points of view \cite{HALL-KIRWIN} \cite{SU-JEN-KAN}

We have concentrated our attention in how the geometry of $(M,g)$
determines the set where the adapted complex structure can be defined.
 One aspect of this is the determination of
the  possible values of $R>0$ for which
\begin{equation}\label{TRM-0}
T^RM\,=\,\{\mathrm{v} \in TM\,\\, \|\mathrm{v}\|=\sqrt{g(\mathrm{v},\mathrm{v})}\,<\,R\}\,
\end{equation}
admits the adapted complex structure, equivalently, for $M$ compact, the possible radii $R$ the Grauert tube asssociated to  $(M,g)$ can have.

There are two inequalities that  serve as our  motivation
and starting point.

The first one is due to
Lempert and Sz\H{o}ke \cite{LESZ} who showed that  if
the adapted complex structure is defined
on $T^RM$,  for some $R>0$ finite or infinite, the
inequality
\begin{equation}\label{LSZIR}\fbox{$\displaystyle
\sigma\,\geq \,-\frac{1}{4}\frac{\pi^2}{R^2}\,$}
\end{equation}
must hold. The second one applies to the
  case  $R\,=\,\infty$ and is due to Sz\H{o}ke \cite{SZ-THESIS} who proved
  that
  in  addition to (\ref{LSZIR}), if the adapted structure is defined on all $TM$,
 the inequality
 \begin{equation}\label{SZI}
 \Delta\sigma\,\geq\,-16 \,\sigma^2\,,
  \end{equation}
  with $\Delta$ the Laplacian operator for the metric $g$ on $M$, must
hold as well.

Clearly, the curvature condition alone
is far from sufficient to guarantee existence of the structure on $T^RM$. This is illustrated for $R=\infty$ by the case of a torus \cite{LESZ}, by
Sz\H{o}ke result on surfaces of revolution \cite{SZOKE-SR},
and as shown by the author, by the triaxial ellipsoid \cite{RMAEllipsoid} and more generally Liouville metrics on the two-sphere \cite{RMA-QJM-09}.

So, we would like to understand if there are properties of the curvature that when met would guarantee the existence of the adapted structure. This question of course applies to $M$ of any dimension, but as a first step we limit ourselves to dimension of $M=2$; so ``curvature'' means Gauss curvature.

  The two necessary conditions above are a consequence
of the non-negativity of all the derivatives of
odd order of functions
in the Pick class, also known as Nevanlinna or Herglotz functions, that is,
functions analytic and with positive imaginary part on the
upper half-plane,
\begin{equation}\label{H+0}
H^+\,:=\,\{z\in \mathbb{C}\,|\, 0\,<\,\Im z\},
\end{equation}
 which are
defined across some interval of the boundary of $H^+$. Higher order necessary conditions are of course obtainable, as
the computations of Lempert and Sz\H{o}ke make clear.

In this paper we propose a framework to view all those higher order conditions, and provide \emph{necessary and sufficient}
conditions for the adapted complex structure to be defined on $T^RM$
that depend on the Gauss curvature $\sigma$
of $M$\,. This is possible because our set up is based on
properties of derivatives which
give a characterization of functions in the Pick class according to  Loewner's Theorem \cite{Bendat-Sherman}, \cite{DONOGHUE}.

We give a basic set of conditions in the form of an infinite
sequence of differential inequalities of
increasingly higher order in $\sigma$, viewed as function in $TM$.

Some of our results will apply to sectional curvature in higher dimensions for some situations, giving necessary conditions. However obtaining necessary and sufficient conditions requires the curvature operator.

In (\ref{INTRO-S-R-TM-vs}) we display our basic
inequalities in terms
of certain
functions in $TM$, the ``higher order Schwarzians in TM",  that we
introduce in Section \ref{SECTION-SCHWAR-CURV}. These
inequalities are preceded by
those in
(\ref{INTRO-DET-R-INEQ})  that we derive
in Section \ref{SECT-HOSCHWARZ} in terms of the
higher order Schwarzians on the real line $\mathbb{R}$.

For convenience we introduce the  following.

\begin{Def} Let $R\in (0,\,\infty]$ be fixed.
We denote by $\mathfrak{L}_R$ the class of smooth functions
$\vartheta\colon \mathbb{R}\to \mathbb{R}$ for which a quotient, and hence \emph{all} quotients (see comment at the beginning of the Proof of  Theorem \ref{TH-EXTEND-FINITE-R})
of independent solutions of
\begin{equation}\label{EQ-INTRO}
y^{\prime\prime}(x)+\vartheta(x)\,y(x)\,=\,0
\end{equation}
have meromorphic extensions to the $R$-strip
$$\{z\,\in \mathbb{C}\,,|\Im z|\,<\,R\,\}$$
that are holomorphic and  with non-vanishing imaginary part for $0\,<\,\Im z\,<\,R$.
\end{Def}

Thus, we begin by seeking conditions on $\vartheta$ to be in $\mathfrak{L}_R$.
It is well-known, and can be checked by a computation,  that
$\vartheta$ is related to
the quotient  $h=y_2/y_1$ of any pair of
independent solutions of (\ref{EQ-INTRO}),
$y_1$ and $y_2$,
   by the equality valid off the zeros of $y_1$,
\begin{equation}\label{classical-SCHW}
2\, \vartheta\,=\,
 \frac{{h}^{\prime\prime\prime}}
 {{h}^{\prime}}-
\frac{3}{2}
\left(\frac{{h}^{\prime\prime}}
           {{h}^{\prime}}
           \right)^2 \,,
\end{equation}
the right-hand side of which is the \emph{(classical) Schwarzian} of $h$.
For high order conditions on $\vartheta$ to be in $\mathfrak{L}_R$
we are led to  consider  the \emph{higher order Schwawrzians}
\begin{equation}
\mathcal{S}^{\vartheta}_n\colon\mathbb{R}\to \mathbb{R},
\end{equation} for  $n\,=\,1, \,2,\,\cdots$,
defined by the generating function (see also (\ref{n-Schwarzian}))
\begin{eqnarray} \label{INTRO-S-R}
\frac{  h(x+t)-h(x)}
{h^\prime(x)+ \frac{1}{2}\,\frac{h^{\prime\prime}(x)}{h^\prime(x)}\left(
h(x+t)-h(x) \right)}\,
=\,\sum_{n\,=\,1}^\infty\,
\mathcal{S}^{\,\vartheta}_n(x)\, \frac{t^n}{n!}\,.
\end{eqnarray}
\begin{Rem}
By the Fa\`{a} di Bruno formula,
\begin{eqnarray}\notag
\frac{\mathcal{S}^\vartheta_n(x)}{n!}=
\hskip-1.6cm\sum^n_{\begin{array}{c}k=1\\k_1+\cdots+k_n=k\\
k_1+ 2k_2\cdots+nk_n=n\end{array}}\hskip-1cm
(-1)^{k+1}\frac{k!}{ k_1!\cdots k_n!}
\left(\frac{h^{(1)}(x)}{1!}\right)^{k_1-2k+1}
\left(\frac{h^{(2)}(x)}{2!}\right)^{k_2+k-1}\cdots
\left(\frac{h^{(n)}(x)}{n!}\right)^{k_n}\,.
\end{eqnarray}
always provided that $h^\prime(x)\neq 0$, otherwise use $1/h(x)$\,, in light of Remark \ref{MOEBIUS-INV}.
\end{Rem}
\begin{Rem}
We  get that $\mathcal{S}^{\,\vartheta}_0\,=\,0$,\,
$\mathcal{S}^{\vartheta}_1\,=\,1$, \,${S}^{\,\vartheta}_2\,\equiv\, 0$,
 while $\mathcal{S}^{\,\vartheta}_3$ equals the right-hand side of
(\ref{classical-SCHW}). Moreover (see Section \ref{computations})
\begin{eqnarray}\label{shift}
\mathcal{S}^{\,\vartheta}_4=
\frac{h^{\prime\prime\prime\prime}}{h^{\prime}}
-
4
\frac{h^{\prime\prime}\;
h^{\prime\prime\prime}}{ \left(h^{\prime}\right)^{2}}
+
3
\left(\frac{h^{\prime\prime}}{h^{\prime}}\right)^{3}
\,=\,2\,\vartheta^\prime\,.
\end{eqnarray}

These functions are treated by H. Tamanoi \cite{Tamanoi},
and more recently by S. Kim and T. Sugawa \cite{KS}.
Note that our  indexing for the Schwarzians
is shifted by one unit from theirs.
The following recursion, hinted at in (\ref{shift}),  gives an alternative definition
 (Proposition 2.1 \cite{KS}),
\begin{equation}\label{HOS-REC-FORMULA}
 {\mathcal{S}}^{\,\vartheta}_{n+1}(x)
 \,=\,\left({\mathcal{S}}^{\,\vartheta }_{n}(x)\right)^\prime\;+\;\vartheta(x)\,
  \sum_{k=1}^{n-1}\,
  \binom{n}{k}\;{\mathcal{S}}^{\,\vartheta}_{k}(x)\;
  {\mathcal{S}}^{\,\vartheta}_{n-k}(x)\;.
  \end{equation}
  \end{Rem}
\begin{Rem}\label{FFS}
  In  Proposition
   \ref{Def-FPS}, we provide another very useful expression for these functions
   that can be interpreted  as their "normal coordinate" definition; in fact that is the formula most used
   throughout this paper.
 \end{Rem}
By definition,
for $R=\infty$ the function $\vartheta$ is in $\mathfrak{L}_R$
if and only if any quotient $h$ of two independent solutions (\ref{EQ-INTRO}) is a Pick function defined also across intervals in $\mathbb{R}=\partial H^+$. These functions are classically
characterized
by the non-negativity of an infinite quadratic form
defined by their derivatives arranged in a Hankel matrix of  infinite order.
This leads to  inequalities that are
determinantal of increasing order in
the Schwarzians, and thus, to
canonical higher order conditions for
$\vartheta$.

We treat the case $R<\infty$ simultaneously  with the case $R\,=\,\infty$
by the introduction of
 higher order ``$g_R$-Schwarzians"
$${\mathcal{S}}^{\,\vartheta,\,g_R}_{n}\colon\mathbb{R}\to\mathbb{R}$$
given by Definition \ref{HORS}.
These are linear combinations
of the Schwarzians, as  displayed in (\ref{Lin-Comb}),
with coefficients polynomial in $R$
that depend on a choice of   a Pick function $g_R$ selected
so that in the limit  $R\,=\,\infty$
 the $g_R$-Schwarzians become the higher
 order Schwarzians.
 \begin{Rem}\label{REM-GR}
 To illustrate, with $\displaystyle g_R(z)\,=\,-\frac{R}{\pi}\,\ln \left(1-\frac{\pi\,z}{R}\right)$,
we now list (see Section \ref{computations}),
\begin{equation}\label{INTRO-First-R-Schwarzians}
{\mathcal{S}}^{\,\vartheta, \,g_R}_1
\,=\,1\,,\;\;\;
{{\mathcal{S}}^{\,\vartheta, \,g_R}_2}\,=\,
\frac{\pi}{R}\,,
\;\;\;{\mathcal{S}}^{\,\vartheta, \,g_R}_3\,=
\,{\mathcal{S}}^{\,\vartheta}_3\,+\, \frac{2\,\pi^2}{R^{2}}
\,=\,
2\,\vartheta\,+\, \frac{2\,\pi^2}{R^{2}}\,.
\end{equation}
\end{Rem}
The  main results are the following.

\vskip.2cm
  \textit{Theorem \ref{TH-EXTEND-FINITE-R}} shows that, for $R$ finite or infinite, $\vartheta$ is in $\mathfrak{L}_R$ if and only
if for all integer $n \geq 1$ and all $x\in \mathbb{R}$\,,
\begin{equation}\label{INTRO-DET-R-INEQ-1}
{\mathcal{D}}^{\,\vartheta, \,g_R}_n(x)\,:
=\,\det\left[\frac{{\mathcal{S}}^{\,\vartheta, \,g_R}_{i+j-1}
(x)}{(i+j-1)!}\right]_{\,i,\,j\,=\,1}^{n} \,\geq \,0\,.
\end{equation}
Furthermore, assuming real analyticity of $\vartheta$, it is a
sufficient condition
that the inequalities hold at at least one point.

Since the  functions $\mathcal{S}^{\,\vartheta}_n$
are expressible as polynomials in $\vartheta$ and its derivatives,
  \begin{equation}\label{S-INTRO}
  \mathcal{S}^{\,\vartheta}_n
  \,=\,\pi_n(\,\vartheta,\,
  \vartheta^\prime,\,\ldots,\,
  \vartheta^{(n-3)} ),
  \end{equation}
  where $\pi_n$ are certain polynomials, essentially
  the ones given by Tamanoi \cite{Tamanoi}
  (See Proposition \ref{Schwarzians-in-derivatives}
  and Remark \ref{FFS}), then
  the inequalities
(\ref{INTRO-DET-R-INEQ-1}) yield canonical
conditions on $\vartheta$.

  It  follows that
  for every integer $n\geq 2$  the inequality
  ${\mathcal{D}}^{\,\vartheta, \,g_R}_n\,\geq\,0$ is
   polynomial in
 $$\left\{\vartheta, \,\vartheta^\prime,\,
  \vartheta^{\prime\prime},\,\cdots,\,
   \cdots, \,\vartheta^{(2n-4)}, \frac{\pi}{R}\right\}\,,$$
   of degree $n(n-1)$ in $\frac{\pi}{R}$,
  isobaric with
   weight $\operatorname{w}=n(n-1)$ in $\vartheta$ and its derivatives (See Definition \ref{t-degree}) and with  rational constant coefficients.

The  construction in  Theorem \ref{TH-EXTEND-FINITE-R} is readily
applied to  quotients
of Jacobi fields along the geodesics of a two-dimensional manifold $M$
 with Riemannian metric $g$. One  obtains necessary and sufficient
 conditions for the existence of the adapted complex structure
 on $T^RM$, for $R$ finite
 or infinite. Such characterizing conditions are expressible as inequalities
  in
 covariant derivatives of the Gauss curvature $\sigma$ on $M$.

In more detail, for a given $g_R$ as above, define  a
sequence of functions on the tangent bundle
\begin{equation}\label{INTRO-S-R-TM}
\mathrm{S}^{\sigma, \,g_R}_n  \colon \, TM\,\to \mathbb{R},
\end{equation}  so
that restricted to the unit tangent bundle $UM\subset TM$
correspond to the higher
order $g_R$-Schwarzians  for the second order
differential equation associated by linearization
to the geodesic flow geodesic flow
\begin{equation}\label{GF}
\phi\colon UM\times \mathbb{R}\to UM\,;
\end{equation}
here $\phi_t\mathrm{v}=\dot{\gamma}(t)$ if $\gamma$ is
the geodesic with $\dot{\gamma}(0)\,=\, \mathrm{v}$.
Thus, we  have for all integer $n>1$, all $
\mathrm{v}\in UM$ and all $t\in\mathbb{R}$,
\begin{equation}\label{INTRO-S-R-TM-vs}
\mathrm{S}^{\sigma, \,g_R}_n(\phi_t \mathrm{v})\,
=\,\mathcal{S}^{\vartheta,\,g_R}_n(t)\,,
\end{equation}
where
\begin{equation}\label{Theta-Sigma}
\vartheta(t)\,=\,\sigma(\phi_t \mathrm{v})\,.
\end{equation}
Putting for  $\mathrm{v}\in TM$,
  $$\|\mathrm{v}\|^2\,=\,g(\mathrm{v},\mathrm{v}),$$
  the correspondence (\ref{INTRO-S-R-TM-vs}) formally amounts to
replacing in (\ref{S-INTRO}) the function
$\vartheta=\vartheta(t)$ by the product of $\|\mathrm{v}\|^2$
 times the Gauss curvature $\sigma$ viewed as a function
in $TM$, and
the derivatives
of $\vartheta$ with respect to  $t$
in (\ref{S-INTRO}) by derivatives of $\sigma$ along the geodesic
flow.

We introduce these functions
in Definition \ref{DEF-S-TM} and Definition \ref{DEF-S-R-in-TM}
where we use, equivalently,  covariant derivatives of $\sigma$. Here we note for clarification that
in particular for $R\,=\,\infty$,
\begin{equation}\label{S-INTRO-TM}
  \mathrm{S}^{\,\sigma}_n
  \,=\,\pi_n(\,\|\mathrm{v}\|^2\, \sigma,\,
  \|\mathrm{v}\|^2 \nabla_{\mathrm{v}}\vartheta,\,\ldots,\,
  \|\mathrm{v}\|^2\|\nabla^{n-3}_{\mathrm{v}}\sigma ),
  \end{equation} and that the corresponding functions for finite $R$
  are linear combinations of the functions in (\ref{S-INTRO-TM})  with coefficients polynomial in
  $R\,\|\mathrm{v}\|^{-1}$.

\vskip.2cm
\textit{Theorem \ref{TH-EXTEND-TUBE-FINITE}}
characterizes the  existence of the
adapted complex structure on $T^RM$ where  $R\,>\,0$, is finite or infinite,
by a basic  sequence of inequalities
\begin{equation}\label{INTRO-DET-R-INEQ}
{\mathrm{D}}^{\,\sigma, \,g_R}_n(\mathrm{v})\,:=\,\det\left[\frac{{\mathrm{S}}^{\,\sigma, \,g_R}_{i+j-1}
(\mathrm{v})}{(i+j-1)!}\right]_{\,i,\,j\,=\,1}^{n} \,\geq \,0\,,
\end{equation}
for all integer $n\,\geq\,1$ and
for all $\mathrm{v}$  on a certain set $\mathcal{Z}\subset TM$
which may be taken to be the unit tangent
bundle $UM$,\footnote{Since our definitions
render $\mathrm{D}^{\sigma, \,g_R}_n$ homogeneous
along the fiber of $TM$ of degree
$n(n-1)$, we may take any set that project (along the fibers of TM) to $UM$, or even  a
smaller is there is symmetry}
Such are   inequalities
polynomial in the covariant derivatives of
$\sigma$ illustrated by (\ref{EQ-INEQ-3-TM}).
\begin{Rem}
 From this basic set, by integration along each fiber of the unit tangent bundle $UM$,
one may derive necessary conditions on $M$ which include
(\ref{LSZIR}) and, in second order in $\sigma$,  as shown in Proposition \ref{PROP-SEC-OR-M},
  \begin{eqnarray}\label{INE-3}
  32\,\sigma^3
             +6\,\Delta{(\sigma^2)}
             +\left(3\,\Delta{\sigma} +
             48\,\sigma^2\right)
             \frac{\pi^2}{R^2}
             +18\,\sigma\,
             \frac{\pi^4}{R^4}+\,2\,
             \frac{\pi^6}{R^6}\,\geq\,27\,\|\operatorname{Grad}{\sigma}\|^2\,.
             \end{eqnarray}

             Note that inequality
             (\ref{INE-3})
             results from a determinantal condition in the quadratic form referred to above, and
             in the limit $R=\infty$ reduces to an inequality
             stronger  than (\ref{SZI}) which comes from  a diagonal condition.
\end{Rem}
\begin{Rem}
             One more integration, now of (\ref{INE-3}) on $M$ assumed orientable and closed, yields, with
             $\int_M\,
\mathrm{dA}\,=\,\mathrm{Area}(M)$ the total area of $M$,
        \begin{eqnarray}\label{INTRO-INTEG}
  &32\,\int_M\,\sigma^3\,\mathrm{dA}
             +
             48\,\,
             \left( \frac{\pi}{R}\right)^2\,\int_M\,\sigma^2\,\mathrm{dA}
             +18\,\left( \frac{\pi}{R}\right)^4\,\int_M\,\sigma\,
             \mathrm{dA}
             +\,2\,\left( \frac{\pi}{R}\right)^6\,\mathrm{Area}(M)
             &\\\notag &\,
             \qquad\qquad\geq\,27\,
             \int_M\,\|\operatorname{Grad}{\sigma}\|^2\,
             \mathrm{dA}\,.&
             \end{eqnarray}
\end{Rem}
In Theorem \ref{TH-EXTEND-TUBE-FINITE}
the role of the Schwarzians is to provide an intrinsic
description, via determinantal curvature inequalities,  of the higher order  properties of Jacobi fields
necessary for the existence of the adapted complex structure.
However, the infinite Hankel matrix with the entries involving the $g_R$-Schwarzians can be used to characterize other properties of quotients of Jacobi fields along a geodesic.
We include some results along this direction in Section \ref{SECT-RANK} as well as in Section \ref{SECT-RANK-C}
where we extend the
parameter $R$ in the  $g_R$-Schwarzians
to purely imaginary values,  $R=\sqrt{-1}\lambda$.

In Section \ref{SEC-SCHW-AS-MOMENTS} we briefly point out some equivalent
ways in which properties of the Schwarzians  characterize the existence of the adapted structure. This is derived from the classical theory of moments applied at  each $\mathrm{v}\in TM$, which for example implies that the existence of   a positive Borel measure so that for all integer $n\geq 1$
 $$\fbox{$\displaystyle {\mathrm{S}}^{\,\sigma,\, R}_n(\mathrm{v})\,= \,n!\,
   \,\int_{-\infty}^{\infty}
    t^{n-1} \mathrm{d}\mu^{\,\sigma,\,R}_\mathrm{v}(t)$}\,
 $$
 is equivalent to the adapted complex structure being defined on $T^RM$. Here the measures depend on  $\mathrm{v}$,  but uniform estimates
of the size of their support, all of which are actually bounded, are easily obtainable under certain assumptions on the Gauss curvature in $M$. For completeness we include these in  Proposition
\ref{PROP-SUPP-M}.

We include many illustrative computations in section \ref{SEC-Computations} such as
the explicit calculation of the Schwarzians and related determinants for constant curvature. A list items included in this section is displayed in the table of contents.

\section{Higher Schwarzians in $\mathbb{R}$ and related functions}\label{SECT-HOSCHWARZ}

Let $\vartheta \colon \mathbb{R}\to \mathbb{R}$ be a smooth
function and  $f_1$ and $f_2$  any pair of independent solutions  of
\begin{equation}\label{DEQA}
f^{\prime\prime}(t)+\vartheta(t)f(t)=0\,.\end{equation} Since   $\,f_1\,f_2^\prime-f_1^\prime\,f_2\,\neq\, 0$
\begin{equation}\label{FMATRIX}
 \mathbf{F}(t)\,=\,\left[\begin{array}{cc}
f_1(t)&f_1^\prime(t)\\
f_2(t)&f_2^\prime(t)
\end{array}\right]
\end{equation}
  is invertible for all $t$; thus, for all $(s,t)$ in $\mathbb{R}^2$, we define the  matrix
$$\mathbf{F}(x,\,t)\,:=\,\mathbf{F}^{-1}(x)\,\mathbf{F}(t)\,.
$$
 For fixed $x$,  the components of the first column of $\mathbf{F}(x,\,t)$
viewed as functions of $t$,
 form a pair of independent solutions of  (\ref{DEQA}). Since $\mathbf{F}(x,\,x)\,=\,\mathbf{I}$, the $2\times2$ identity matrix, the function
\begin{equation}\label{Vst}
V(x,\,t)\,=\,\frac{\mathbf{F}(x,\,t)_{2,1}}{\mathbf{F}(x,\,t)_{1,1}},
\end{equation}
is
defined for all values of $t$ in a neighborhood of $x$ (See also Remark \ref{Vst-REMARK}).
 \begin{Def}\label{HOS}
 The\emph{ higher order Schwarzian of order $n$ corresponding to $\vartheta$} is the function  ${\mathcal{S}}^{\,\vartheta}_{n}
\colon \mathbb{R}\to\mathbb{R}$ given by
\begin{equation}\label{n-Schwarzian}
 \mathcal{S}^{\,\vartheta}_n(x)\,:=\, \frac{\partial^{\,n} V(x,x+t)  }{\partial \,t^n}\,|_{\,t\,=\,0}\,.
\end{equation}
\end{Def}

\begin{Rem}\label{Vst-REMARK}
Let $x\in (a, b) \subset \mathbb{R}$ and let $h\,=\,f_2/f_1$ where $f_1$ and $f_2$
independent solutions of (\ref{DEQA}) with $f_1\neq 0$ in $(a, b)$. Then, it
follows,
using  (\ref{Vst}), $h\,=\,\frac{f_2}{f_1}$,
$ h^\prime\,=\,\frac{f_2^\prime}{f_1}-\frac{f_2\,f_1^\prime}{f_1^2}\,
=\,\frac{1}{f^2_1}$ and $h^{\prime\prime}\,=\,-2\,\frac{f^\prime_1}{f_1^3}$ that
\begin{eqnarray}\label{Schwarzian-shift}
V(x,\,x+t)\,
=\,
\frac{  h(x+t)-h(x)}
{h^\prime(x)+ \frac{1}{2}\,\frac{h^{\prime\prime}(x)}{h^\prime(x)}\left(
h(x+t)-h(x) \right)}\,.
\end{eqnarray}
Thus the functions in Definition \ref{HOS} are the same as those in (\ref{INTRO-S-R}).
\end{Rem}

\begin{Rem}\label{MOEBIUS-INV}
It is well-known that the values in (\ref{n-Schwarzian})
  do not depend on the pair of solutions chosen in (\ref{FMATRIX}), since given another pair of independent solutions $\tilde{f}_1$ and $\tilde{f}_2$  there are $a,b,c,d$  real constants with $a\,d\,\neq \,b\,c$
so that $\tilde{\mathbf{F}}(t)\,:=\,
\left[\begin{array}{cc}
\tilde{f}_1(t)&\tilde{f}_1^\prime(t)\\
\tilde{f}_2(t)&\tilde{f}_2^\prime(t)
\end{array}\right]\,=\,\left[\begin{array}{cc}
a&b\\
c&d
\end{array}\right]\left[\begin{array}{cc}
f_1(t)&f_1^\prime(t)\\
f_2(t)&f_2^\prime(t)
\end{array}\right],
$
 and hence
$\tilde{\mathbf{F}}(s,\,t)\,:=\,
 \tilde{\mathbf{F}}^{-1}(x)
 \,\tilde{\mathbf{F}}(t)\,=\,
 \mathbf{F}^{-1}(x)\,\mathbf{F}(t)
 \,=\,\mathbf{F}(x,\,t)$.
\end{Rem}

\begin{Def}[The $g_R$-Schwarzians]\label{HORS}
For  $R\,\in\,(0,\,\infty]$
we denote by $g_R$ a Pick function
\begin{equation}\label{GR-PICK}
g_R\,\colon (-a,\,a)\,{\cup}\,\left(\mathbb{C}
\setminus\mathbb{R}\right)\,\to\, \,(-b,b)
\,{\cup}\,\{z\,\in \,\mathbb{C}\,|\,0\,<\,|\Im{z}|<\,R\,\}
 \end{equation}
  for some  $a,b$  $\in \mathbb{R}$, $a,b>0$,  so that  $g_R(0)=0$, and
 \begin{equation}\label{gR-lim}
 \lim_{R \mapsto \infty}\,g_R(z)\,=\,z
 \end{equation}
  for every $z$ in its domain.
Then, for any such $g_R$,
 we define
 higher order ``$g_R$-Schwarzians" by
 \begin{equation}\label{VH-SR}
 \fbox{$ \displaystyle \,{\mathcal{S}}^{\,\vartheta, \,g_R}_{n}(x)\,=\,
 \frac{\partial^{\,n}   }{\partial \,t^n}V \left( x,\;x\,+\,g_R(t) \right)\,|_{\,t\,=\,0}$}\,.
\end{equation}
 \end{Def}
 \begin{Rem}
   A preferred example is $\displaystyle g_R(z)\,=\,-\frac{R}{\pi}\,\ln \left(1-\frac{\pi\,z}{R}\right)$. We will indicate when such a specific choice is made\,.
\end{Rem}
We
 present preliminary formulas to be used  in Theorem  \ref{TH-EXTEND-FINITE-R}.
\begin{Def}\label{Def-FPS}
For a given $x \in \mathbb{R}$
\emph{the pair of fundamental solutions associated to the point $x$} is
the pair of solutions
$f_{x,1}=f_{x,1}(t)$ and $f_{x,2}=f_{x,2}(t)$ of  (\ref{DEQA}) determined by
\begin{equation}\label{FP}
f_{x,1}(x)\,=\,f^\prime_{x,2}(x)\,
=\,1\,,\;\;\; f_{x,2}(x)\,=\,f^\prime_{x, 1}(x)\,=\,0\,.
\end{equation}
\end{Def}

\begin{Prop}\label{H-SCHWARZ}
Given $x\in \mathbb{R}$ putting
$h_{x}(t)\,:=
\,f_{x, 2}(t)/f_{x, 1}(t)$ then for integer $n\,\geq \,1$, for
any $g_R$ as in Definition \ref{HOS} and
with the derivative notation as usual,
   $h^{(0)}\,=\,h$, $h^\prime\,=\,h^{(1)}$, etc.
\begin{itemize}
\item[(1)] we have
   \begin{eqnarray}\label{FLA-H-SCHWARZ}
   \fbox{$\displaystyle \mathcal{S}^{\,\vartheta}_{n}(x)\,=\,
  h_{{x}}^{(n)}(x)$}\,;
   \end{eqnarray}
\item[(2)] putting
\begin{equation}\label{DEF-Tx}
        T_{x}(t)=x+t
\end{equation}
    we have
     \begin{eqnarray}\label{FLA-H-R-SCHWARZ}
    \fbox{$\displaystyle \mathcal{S}^{\,\vartheta,\,g_R}_{n}(x)\,=\,
     \left({h_{x}{\circ}T_{x}{\circ }\, g_R}\right)^{(n)}(0)$}\,.
       \end{eqnarray}
\end{itemize}
\end{Prop}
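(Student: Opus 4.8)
The plan is to exploit the freedom in choosing the pair of independent solutions that enters $\mathbf{F}$. By Remark \ref{MOEBIUS-INV} the function $V(x,t)$ of (\ref{Vst})---and therefore both Schwarzians---does not depend on which pair is used to build $\mathbf{F}$ in (\ref{FMATRIX}). I am thus free to take the fundamental pair $f_{x,1},f_{x,2}$ attached to $x$ in Definition \ref{Def-FPS}. The whole point of this choice is that the normalization (\ref{FP}) says exactly that $\mathbf{F}(x)=\mathbf{I}$, the identity; hence $\mathbf{F}(x,t)=\mathbf{F}^{-1}(x)\mathbf{F}(t)=\mathbf{F}(t)$, and reading off the first column gives $\mathbf{F}(x,t)_{1,1}=f_{x,1}(t)$ and $\mathbf{F}(x,t)_{2,1}=f_{x,2}(t)$. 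Substituting into (\ref{Vst}) collapses the quotient to
\[
V(x,t)=\frac{f_{x,2}(t)}{f_{x,1}(t)}=h_x(t),
\]
valid for $t$ in the neighborhood of $x$ where $f_{x,1}$ does not vanish; such a neighborhood exists because $f_{x,1}(x)=1\neq0$.

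With the identity $V(x,\cdot)=h_x$ in hand, part (1) is immediate: formula (\ref{n-Schwarzian}) becomes $\mathcal{S}^{\vartheta}_n(x)=\partial_t^n\,h_x(x+t)|_{t=0}=h_x^{(n)}(x)$. For part (2) I would feed the same identity into the definition (\ref{VH-SR}). Since $g_R(0)=0$ (Definition \ref{HORS}), the argument $x+g_R(t)$ stays in the domain of $h_x$ for $t$ near $0$, so $V(x,\,x+g_R(t))=h_x(x+g_R(t))=(h_x\circ T_x\circ g_R)(t)$ with $T_x$ as in (\ref{DEF-Tx}); differentiating $n$ times at $t=0$ yields $\mathcal{S}^{\vartheta,g_R}_n(x)=(h_x\circ T_x\circ g_R)^{(n)}(0)$.

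There is no genuine analytic obstacle here: the entire content of the proposition is the normalization $\mathbf{F}(x)=\mathbf{I}$, made legitimate by the invariance in Remark \ref{MOEBIUS-INV}. The one point to keep honest is that $h_x$ is only a local object---defined where $f_{x,1}\neq0$---so I must carry along the facts $f_{x,1}(x)=1$ and $g_R(0)=0$ to be sure that the derivatives evaluated at $x$, respectively at $0$, are taken inside the domain of $h_x$. As a sanity check one verifies that this recovers the announced low-order values $\mathcal{S}^{\vartheta}_1=1$, $\mathcal{S}^{\vartheta}_2\equiv0$, and $\mathcal{S}^{\vartheta}_3=2\vartheta$ by differentiating $h_x=f_{x,2}/f_{x,1}$ and using $f_{x,1}^{\prime\prime}=-\vartheta\,f_{x,1}$ together with (\ref{FP}) at $x$.
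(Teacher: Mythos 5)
Your proposal is correct and is essentially the paper's own proof: the paper likewise takes $\mathbf{F}$ built from the fundamental pair of Definition \ref{Def-FPS}, uses $\mathbf{F}(x)=\mathbf{I}$ to get $V(x,\,x+t)=h_x(x+t)$, and then invokes Definitions \ref{HOS} and \ref{HORS}. Your additional remarks on the M\"obius invariance (Remark \ref{MOEBIUS-INV}) and on the local domain of $h_x$ via $f_{x,1}(x)=1\neq 0$ and $g_R(0)=0$ simply make explicit points the paper leaves tacit.
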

\begin{proof}  Construct $\mathbf{F}$ as in (\ref{FMATRIX})
the fundamental pair associated to $x$
with $f_{x,\,1}$ and $f_{x,\,2}$ to get
$\mathbf{F}(x)\,=\,\mathbf{I}$, and thus
\begin{equation}\label{VH}
V(x, x+t)\,=\,h_{{x}}(t)\,.
\end{equation} Now,
for (1), use  Definition \ref{HOS}, and for  Part (2), Definition \ref{HORS}.
\end{proof}

\begin{Rem}\label{3Schwarz} From (\ref{FLA-H-SCHWARZ}) and
(\ref{FLA-H-R-SCHWARZ}) note that
since
$h_{{x}}(x)=0$, $h_{{x}}^\prime(x)=1$,  $h_{{x}}^{\prime\prime}(x)=0$,
and,  by (\ref{classical-SCHW}),
$2\,\vartheta(x)\,=\,h_{x}^{\prime\prime\prime}(x)$, we have
${\mathcal{S}}_0^{\,\vartheta}\,= \,0$, ${\mathcal{S}}_1^{\,\vartheta}\,=\, 1$, ${\mathcal{S}}_2^{\,\vartheta} \,=\, 0$ and $
{\mathcal{S}}^{\,\vartheta}_3\,=\,2\,\vartheta$\,.
\end{Rem}

\begin{Rem}\label{R-Schwarz}
From the definition,
each function $\mathcal{S}^{\,\vartheta,\,g_R}_{n}$ is a linear
combination with constant coefficients
of the Schwarzians; we have
\begin{equation}\label{Lin-Comb}
    \fbox
    {$ \displaystyle
        \mathcal{S}^{\,\vartheta,\,g_R}_{n}(x)\,=\,
     \sum_{k=1}^n\,  g_{R, \,n,\,k}\,{\mathcal{S}}^{\,\vartheta}_k(x)$}
\end{equation}
where, by Fa\`{a} di Bruno formula,
 \begin{eqnarray}\label{GNK}
 {\mathrm{g}}_{R,\,n,k}\,=\,\overline{\sum}_{n,k}\prod_{i=1}^n\frac{1}{k_i!}
\left(\frac{({\mathrm{g}}_R)^{(i)}(0)}{i!}\right)^{k_i}\,
\end{eqnarray}
with $
\overline{\Sigma}_{\,n,\,k}$ indicating the sum according to
  $\sum_{i\,=\,1}^n\,k_i\,=\,k$ and  $\sum_{i\,=\,1}^n i\; k_i\,=\,n$.

An explicit computation of these coefficients for the choice  $\displaystyle g_R(z)\,=\,-\frac{R}{\pi}\,\ln \left(1-\frac{\pi\,z}{R}\right)$ is provided in  Proposition \ref{PROP-GR-LN-in-R-}\,.
\end{Rem}
\begin{Rem}\label{HORS-C} From (\ref{Lin-Comb}) it follows that the  real parameter $R>0$ enters in  any $g_R$-Schwarzian
polynomially in $R^{-1}$, and thus $R$  can be extended to mean a non-zero
complex number. For example, for $0\neq \lambda \in \mathbb{R}$ put
 \begin{equation}\label{HORS-GR-C}
 \fbox{$\displaystyle
 \mathcal{S}_n^{\,\sigma, \,g_{\sqrt{-1}\lambda}}
 :=\mathcal{S}_n^{\,\sigma, \,g_{R}}\big{|}_{R=\sqrt{-1}\lambda}$}\,.
 \end{equation}
 We will use these in connection to closed geodesic in Section \ref{SEC-RANK-C}
\end{Rem}
\begin{Def} \label{DEF-DET-R}
Given $R\,\in\,(0,\,\infty]$, $g_R$ as above,
and integer $n\,\geq 1$ put
\begin{equation}\label{DEF-D-R-in-R}
\fbox{$\displaystyle
{\mathcal{D}}^{\,\vartheta, \,g_R}_{n}\,(x)\,:=\,
\det \left[\frac{{\mathcal{S}}^{\,\vartheta,
\,g_R}_{i+j-1}(x)}{(i+j-1)!}\right]_{i,\,j\,=1}^{n}
$}\,.
\end{equation}
\end{Def}

\begin{Rem}\label{LAMBDA-FOR-R-INFTY}
For each $x \in \mathbb{R}$ is defined and each integer $n\,\geq\,1$,
 \begin{eqnarray}\label{LIM-S}
 &&\lim_{R \rightarrow \infty} {\mathcal{S}}^{\,\vartheta, \,g_R}_{n}(x)\,=\,{\mathcal{S}}^{\,\vartheta}_{n}(x)\,,
 \\\label{LIM-D}
 &&
 \lim_{R \rightarrow \infty} {\mathcal{D}}^{\,\vartheta, \,g_R}_{n}(x)\,
 =\,{\mathcal{D}}^{\,\vartheta}_{n}(x)\,:=\,
\det
\left[\frac{{\mathcal{S}}^{\,\vartheta}_{i+j-1}(x)}{(i+j-1)!}
\right]_{i,\,j \,=\,1}^{n}\,.
 \end{eqnarray}
\end{Rem}

\section{Theorem \ref{TH-EXTEND-FINITE-R}. Continuation to a strip, finite or infinite}\label{SECT-TH-1}

In this section we obtain a set of necessary and sufficient
conditions on
$\vartheta \colon \mathbb{R}\to \mathbb{R}$
so that it belongs to $\mathfrak{L}_R$.

The underlying result here is \emph{Loewner's Theorem}.
\emph{A $C^\infty$ function $F$ on $(a, b)
\subset \mathbb{R}\subset \mathbb{C}$ has
the property that for all $n=1,2,\ldots$
and all $x\in (a,b)$
\begin{equation}\label{DET-INEQ-F}
\det\left[\frac{F^{(i+j-1)}\,(x)}{(i+j-1)!}
 \right]_{\,i,\,j\,=\,1}^{n} \,\geq \,0
\end{equation}
if and only if it extends
to a function on $(\mathbb{C}\setminus\mathbb{R})\,\cup (a,b)$
with  positive imaginary part when $\Im z>0$, with equality true for
a value of  $n$ if and only if $F$ is rational.}

In the proof of the next result,
we will show  the necessary condition using Fatou
representation for Pick functions. As for the sufficient condition stated above,
 with the hypothesis at just one point and
real analyticity to it, we adopt an  argument from
Bendat and Sherman \cite{Bendat-Sherman}.

\begin{thm}\label{TH-EXTEND-FINITE-R}
Let
$\vartheta\colon \mathbb{R}\to \mathbb{R}$ be smooth.
  Let $f_1=f_1(t)$ and $f_2=f_2(t)$ be any pair of  independent solutions  of
 $f^{\prime \prime}(t)+\vartheta(t)\,f(t)\,=\,0$\,.
 Put $W(f_1,\,f_2)\,:
=\,f_1\,f^\prime_2-f^\prime_1\,f_2$ (a constant $\neq 0$)
and set  $h\,=\,f_2/f_1$. Let
$R\,>\,0$ finite or infinite be given  and take some map $g_R$ as in (\ref{GR-PICK}).
\begin{itemize}\item[(1)]
If $h$
 extends  meromorphically  on $
 \{\,z\,\in\,\mathbb{C}\,\,,|{\Im}(z)|\,<\,R\,\}$,
  analytically on  \footnote{For $R\,=\,\infty$
 (\ref{HR}), the upper-half plane, is denoted by
  $H_+$.}
\begin{equation}\label{HR}
H^R_+\,:=\,\{ \,z\in \mathbb{C}\,\,,0\,<\,\Im(z)<\,R\,\}\,,
 \end{equation} with $W(f_1,\,f_2)\,\Im{h(z)}\,\Im(z)>0$,
then,
 $\vartheta$ is  real analytic  on $\mathbb{R}$, and for
 all integer $n\,\geq 1$ $$\mathcal{D}^{\,\vartheta, \,g_R}_{n}\,\geq\,0\,.$$
  \item[(2)]
 If $\vartheta$ is real analytic on  $\mathbb{R}$ and there is $x_0\,\in\,\mathbb{R}$  such that
 for all integer $n\,\geq \,1$
 $$\mathcal{D}^{\,\vartheta, \,g_R}_{n}(x_0)\,\geq\,0\,$$
 then $h$ extends as in (1).
 \end{itemize}
\end{thm}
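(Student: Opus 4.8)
The core idea is that the function $V(x,\,x+t)=h_x(t)$ encodes, through its Taylor coefficients at $t=0$, exactly the derivative data of a Pick function, so Loewner's Theorem applies almost verbatim once we pass through the reparametrization $g_R$. The plan is to reduce both directions to Loewner's characterization by identifying the $g_R$-Schwarzians $\mathcal{S}^{\,\vartheta,\,g_R}_n(x)$ with the ordinary derivatives of an auxiliary function obtained by composing $h$ with $g_R$, as recorded in Proposition \ref{H-SCHWARZ}(2): $\mathcal{S}^{\,\vartheta,\,g_R}_n(x)=\left(h_x\circ T_x\circ g_R\right)^{(n)}(0)$. This way the Hankel determinant $\mathcal{D}^{\,\vartheta,\,g_R}_n(x)$ becomes precisely the Loewner determinant (\ref{DET-INEQ-F}) for the function $F_x(s):=h_x\bigl(g_R(s)\bigr)$ evaluated at $s=0$. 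The first task, then, is to establish that $F_x$ is (the germ of) a Pick function near $0$ precisely when $h$ extends as in part (1).

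For part (1), assume $h$ extends meromorphically to the strip $|\Im z|<R$, holomorphically on $H^R_+$, with $W(f_1,f_2)\,\Im h(z)\,\Im z>0$. Up to the sign normalization coming from $W$ and Remark \ref{MOEBIUS-INV}, I may assume $h$ itself maps $H^R_+$ into the upper half-plane. The composite $h\circ g_R$ then maps $H^+$ into $H^+$ near the real point $g_R^{-1}(x)$: indeed $g_R$ is a Pick function carrying $H^+$ into the strip $\{0<\Im z<R\}$ by (\ref{GR-PICK}), and $h$ carries that strip into $H^+$, so the composite is Pick and is analytic across an interval of $\mathbb{R}$ (using that $h$ is meromorphic across $\mathbb{R}$, with poles handled by the Möbius invariance of Remark \ref{MOEBIUS-INV}, replacing $h$ by $-1/h$ near a pole). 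Since $F_x(s)=h_x(g_R(s))$ differs from $h\circ g_R$ by the fixed Möbius change of fundamental system and a shift, it is itself Pick and analytic near $s=0$. Loewner's Theorem then yields $\det\bigl[F_x^{(i+j-1)}(0)/(i+j-1)!\bigr]\geq 0$ for all $n$, which is exactly $\mathcal{D}^{\,\vartheta,\,g_R}_n(x)\geq 0$. Real analyticity of $\vartheta$ on $\mathbb{R}$ follows because $2\vartheta=\mathcal{S}^{\,\vartheta}_3$ is a polynomial in the derivatives of the real-analytic $h$ (the boundary values of the analytic extension are real-analytic wherever $h$ is regular).

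For part (2), the direction is reversed and this is where the real-analyticity-at-one-point argument of Bendat and Sherman enters; I expect this to be the main obstacle. Assuming $\vartheta$ real analytic and $\mathcal{D}^{\,\vartheta,\,g_R}_n(x_0)\geq 0$ for all $n$, I apply the converse half of Loewner's Theorem to $F_{x_0}$ at $s=0$: the nonnegativity of all its Hankel determinants forces $F_{x_0}$ to extend to a Pick function on $(\mathbb{C}\setminus\mathbb{R})\cup(\text{interval})$. Unwinding the composition, $h_{x_0}$ extends to a function mapping the strip image $g_R(H^+)$—i.e. $\{0<\Im z<R\}$—into $H^+$, which gives the meromorphic/analytic extension of $h$ on the $R$-strip with the required sign condition at the single base point $x_0$. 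The delicate step is propagating this from the one point $x_0$ to the conclusion about the original $h$: here real analyticity is essential, since the $\mathcal{D}^{\,\vartheta,\,g_R}_n$ are real-analytic functions of $x$ and the extension of the quotient of independent solutions is governed by the same ODE (\ref{DEQA}) along all of $\mathbb{R}$; the comment invoked at the start of the proof (that \emph{all} quotients extend once one does) guarantees that the local extension at $x_0$ propagates to the global meromorphic extension on the strip. The main technical care is in verifying that the Möbius ambiguity and the pole structure of $h$ are compatible with applying Loewner locally and then globalizing, and in tracking the sign $W(f_1,f_2)$ through the identifications so that the imaginary part has the correct orientation throughout.
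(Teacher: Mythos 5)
Your part (1) is essentially sound and close to the paper's architecture: you reduce by M\"obius invariance to the fundamental quotient and identify $\mathcal{D}^{\,\vartheta,\,g_R}_n(x)$ with the Loewner determinant of the composite at $0$ via Proposition \ref{H-SCHWARZ}(2). (One slip: your $F_x(s):=h_x(g_R(s))$ omits the shift; it must be $h_x\bigl(x+g_R(s)\bigr)=\left(h_x\circ T_x\circ g_R\right)(s)$, since $h_x$ is normalized at $x$, not at $0$ --- you half-acknowledge this, and there is no point "$g_R^{-1}(x)$" in play, the base point is $s=0$.) Where you cite the "only if" half of Loewner on the interval $(-\epsilon_x,\epsilon_x)$, the paper instead re-proves that direction self-containedly: it writes the Pick function $h_x\circ T_x\circ g_R$ in its Fatou representation, shows the measure avoids a neighborhood of $0$, expresses each $\mathcal{S}^{\,\vartheta,\,g_R}_n(x)$ as a moment $n!\int t^{-n-1}\,d\nu_x(t)$, and then exhibits $\mathcal{D}^{\,\vartheta,\,g_R}_n(x)$ as integrals of squared Vandermonde determinants --- an explicit formula reused later in the paper. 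Your citation is legitimate for the theorem itself, so part (1) stands.

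The genuine gap is in part (2). You "apply the converse half of Loewner's Theorem to $F_{x_0}$ at $s=0$", but Loewner's theorem as stated requires the Hankel inequalities at \emph{every} point of an interval; the hypothesis here is at the single point $x_0$, and there is no one-point converse to invoke. You name the Bendat--Sherman argument and flag it as "the main obstacle", but never carry it out --- and it is the entire content of the proof: from $\mathcal{D}^{\,\vartheta,\,g_R}_n(x_0)\geq 0$ for all $n$ one solves the Hamburger moment problem to get $\left(h_{x_0}\circ T_{x_0}\circ g_R\right)^{(n)}(0)=n!\int t^{n-1}\,d\mu_{x_0}(t)$; real analyticity is then used quantitatively to confine $\operatorname{Support}(d\mu_{x_0})$ to $[-1/\rho_{x_0},\,1/\rho_{x_0}]$ with $\rho_{x_0}$ the radius of convergence (otherwise the even moments force a smaller radius, a contradiction); summing the resulting geometric series yields $F(z)=\int z\,(1-t\,z)^{-1}\,d\mu_{x_0}(t)$, a Pick function on $(\mathbb{C}\setminus\mathbb{R})\cup\mathcal{O}_0$ agreeing with the composite near $0$. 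Your globalization step is also misdirected: you propose to propagate from $x_0$ using real analyticity of the $\mathcal{D}_n$ in $x$ and the remark that all quotients extend once one does, but that remark only transfers the conclusion between different quotients $h$; it cannot upgrade a local extension to the strip. In the correct argument no propagation is needed: the moment representation is already defined on all of $\mathbb{C}\setminus\mathbb{R}$, so composing with $g_R^{-1}\circ T_{-x_0}$ immediately gives a Pick map on $H^R_+\cup H^R_-$ plus a neighborhood of $x_0$; the extension across the rest of $\mathbb{R}\setminus f_{x_0,1}^{-1}(0)$ then comes from the fact that $h_{x_0}$, as a quotient of solutions of an ODE with real-analytic coefficient $\vartheta$, is real analytic there, and uniqueness of analytic continuation identifies the two extensions, producing the meromorphic extension with simple real poles at the zeros of $f_{x_0,1}$. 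As written, your part (2) asserts the conclusion of Bendat--Sherman rather than proving it, and substitutes an invalid propagation mechanism for the actual global structure of the representation.
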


\begin{proof}
\underline{For Part (1)}.
On account that
\begin{itemize}
         \item[(i)] quotients of independent solutions are related by
real Mo\"{e}bius transformations,
         \item[(ii)] a Mo\"{e}bius transformation
$\displaystyle M(z)=\frac{a\,z+b}{c\,z+d}$ with $a,b,c, d \in \mathbb{R}$ sends $H^R_+ $
to itself if and only if $\det{M}\,=\,a\,d-c\,d\,>\,0$, and
         \item[(iii)] $W(af_1+bf_2, cf_1+df_2)\,
         =\,W(f_1, f_2)\,\det{M}$,
       \end{itemize}
 it follows that a map
 $h$ satisfies the hypothesis in 1) if and only any real Mo\"{e}bius transformation of $h$ does.

  Consequently, if  $x$ is  any point of $ \mathbb{R}$,  the map
 $\displaystyle h_{x}=\frac{f_{x,\,2}}{f_{x,\,1}}$, which is
the quotient of the pair of fundamental solutions associated to
$x$ (Definition \ref{Def-FPS}), satisfies hypothesis 1).
Since $h^\prime_x(x)\,=\,1\,\neq \,0$,
 the classical Schwarzian of $h_x$ computes $2\,\vartheta$ and shows
its real analyticity, in some
neighborhood
of $x$. It follows the analyticity in all $\mathbb{R}$.

Now, let  $R \in (0, \infty]$ be given  and again let
  $x$ be  any point of $ \mathbb{R}$.
Consider the composition
\begin{equation}
\label{MAP-H-T-GR}
h_{x} \,{\circ} \,T_{x}\,{\circ}\,  {g_R}
\end{equation}
with $T_x$ as in (\ref{DEF-Tx})
and   $g_R$ as in (\ref{GR-PICK}). Since $g_R$ is a Pick function defined
in $\left(\mathbb{C}\,\setminus\,\mathbb{R}\right)\,\cup\,
(a,b)$, with $a\,<\,0\,<\,b$,  and  maps into $|\Im (z)|<R$,  there is
an $\epsilon_x=\epsilon_x^{\vartheta,\,g_R}\,>\,0$ such that
(\ref{MAP-H-T-GR})
is defined and holomorphic
on
\begin{equation}\label{a}
H_+ \,\cup\,H_-\cup\,(-\epsilon_x,\,\epsilon_x)\;.
\end{equation}
Moreover, the map (\ref{MAP-H-T-GR})
has positive imaginary part
on $H_+$
since $g_R$ and
$h_x$ both do  ($g_R$ by definition, and $h_x$ since it satisfies (1) and  $W(f_{x,\,1}, f_{x,\,2})\,=\,1\,>\,0$).
So,
the map (\ref{MAP-H-T-GR}) is a Pick function with domain (\ref{a}).

According to the Fatou  representation for a Pick function, for $z\in\mathbb{C}$,
 \begin{equation}\label{FATOU-F}
\left( h_{x}\,{\circ}\,
T_{x}\,{\circ}\, {g_R}\right)(z)\,=\,\alpha_x\,z\,+ \,\beta_x
 \,+\int_{-\infty}^\infty \,\left(\, \frac{1}{t-z}-\frac{t}{1+t^2}
 \right) \,d\nu_x(t),
 \end{equation}
  with constants $$\alpha_x\,=\,\alpha_x^{\vartheta,\,g_R}\,\geq\, 0,\,\;\;\;
  \beta_x \,=\,\beta_x^{\vartheta,\,g_R}\in \mathbb{R},$$
  and with
 $d\nu_x\,=\,d\nu_x^{\vartheta,\,g_R}$ a non-negative Borel measure
 with $\displaystyle \int_{-\infty}^\infty\frac{d\nu_x(t)}{1+t^2}\,<\,\infty$.

With $z\,=\,x+\sqrt{-1}y$, $x,y\in \mathbb{R}$, the  measure $d\mu_x$ is
 given  by  the weak limit of
 $$\Im \left( h_{x}\,{\circ}\,
T_{x}\,{\circ}\, {g_R}\right)
 (x+\sqrt{-1}y)\,\;\textrm{ as }\;\;y\mapsto  0^+\,.$$
Since the  map  $h_{x}\,{\circ}\,
T_{x}\,{\circ}\, {g_R}$
is real valued in
$(-\epsilon_x,\,\epsilon_x)\subset \mathbb{R}$ we have
$$\int_{-\epsilon_x}^{\epsilon_x} d\mu_x(t)\,=\,
\lim_{y\mapsto 0^+}\int_{-\epsilon_x}^{\epsilon_x} \,\Im \left( h_{x}\,{\circ}\,
T_{x}\,{\circ}\, {g_R}\right)
 (t+\sqrt{-1}y)\, dt\,=\,0\,.
$$
Thus
\begin{equation}\label{supportFATOU}
\mathrm{Support}\left(d\nu_x\right)
\cap(-\epsilon_x, \epsilon_x)=\emptyset\,,
\end{equation}
and we can take derivatives at the origin under the integral sign to get
 \begin{eqnarray}
 1\,\equiv\,{\mathcal{S}}^{\,\vartheta,\,g_R}_{1}(x)\,=\,
 \left( h_{x}\,{\circ}\,
T_{x}\,{\circ}\, {g_R}\right)^\prime(0)
 &=&\alpha_x+\int_{-\infty}^\infty
   \frac{d\nu_x(t)}{t^{2}}\,,
   \end{eqnarray}
   and for all $n\geq 2$
   \begin{eqnarray}
 {\mathcal{S}}^{\,\vartheta,\,g_R}_{n}(x)
 &=&\left( h_{x}\,{\circ}\,
T_{x}\,{\circ}\, {g_R}\right)^{(n)}(0)=\,n!\,\int_{-\infty}^\infty
   \frac{d\nu_x(t)}{t^{n+1}}\,.
\end{eqnarray}
It follows that
 \begin{equation}\label{DET-THM-1}
\det \left[\frac{{\mathcal{S}}^{\,\vartheta,
\,g_R}_{i+j-1}(x)}{(i+j-1)!}\right]_{\,i,\,j\,=\,1}^{n}
\end{equation}
is represented by
$$
\det \left[
\begin{array}{cccc}
   \alpha_x+\int_{-\infty}^\infty t_1^{-2}{d\nu_x(t_1)} & \int_{-\infty}^\infty t_2^{-3}{d\nu_x(t_2)}
     &\,\cdots\,
     &\int_{-\infty}^\infty t_n^{-n-1}{d\nu_x(t_n)}
     \\\\
   \int_{-\infty}^\infty t_1^{-3}{d\nu_x(t_1)}
   &
   \int_{-\infty}^\infty t_2^{-4}{d\nu_x(t_2)}
     &\,\cdots\,
     &\int_{-\infty}^\infty t_n^{-n-2}{d\nu_x(t_n)}
     \\\\
     \cdots & \cdots & \cdots & \cdots
     \\\\
     \int_{-\infty}^\infty t_1^{-n-1}{d\nu_x(t_1)}
      & \int_{-\infty}^\infty t_2^{-n-2}{d\nu_x(t_2)}
     &\,\cdots\,
     &\int_{-\infty}^\infty t_n^{-2n}{d\nu_x(t_n)}
   \end{array}
   \right]\,.
$$
Due to the symmetry of the Hankel matrix,
the multi-linearity of the determinant,
and the linear nature of  integration,  the expression above  is  written as
\begin{eqnarray}\label{Integrals}
\alpha_x \int_{-\infty}^{\infty}\cdots\int_{-\infty}^{\infty}
\det\left[t_{j}^{-i-j}\right]_{\,i,\,j\,=\,2}^{n}
\,d\nu_x(t_2)\cdots \,d\nu_x(t_n) +\\\notag
\qquad
+
\int_{-\infty}^{\infty}\cdots\int_{-\infty}^{\infty}
\det\left[t_j^{-i-j}\right]_{\,i,\,j\,=\,1}^{n}
\,d\nu_x(t_1)\cdots \,d\nu_x(t_n) \,.
\end{eqnarray}
Consider the  integrand corresponding to the second term in (\ref{Integrals}); it can be written as
\begin{eqnarray}\label{INTEGRAND-2}
\det\left[t_{j}^{-i-j}\right]_{\,i,\,j\,=\,1}^{n}&=&
\,
\det\left[t_{j}^{-i}\right]_{\,i,\,j\,=\,1}^{n}\,\prod_{j=2}^{n}t_{j}^{-j}.
\end{eqnarray}
Let $\Sigma$ be the permutation group
of the set  $\{1,\,\cdots,\,n\}$.
For any $\delta \in \Sigma$, from (\ref{INTEGRAND-2}),
\begin{eqnarray}\notag
\det\left[t_{\delta(j)}^{-i-j}\right]_{\,i,\,j\,=\,1}^{n}
\,&=&\,
\det\left[t_{\delta(j)}^{-i}\right]_{\,i,\,j\,=\,1}^{n}
\;
\prod_{j\,=\,1}^{n}t_{\delta(j)}^{-j}
\\\notag
\,&=&\,\mathrm{sign}(\delta)
\;\det\left[t_{j}^{-i}\right]_{\,i,\,j\,=\,1}^{n}\;
 \prod_{j\,=\,1}^{n}
t_{\delta(j)}^{-j}\,,
\end{eqnarray}
from which it follows that
\begin{eqnarray}\label{AV-2}
\sum_{\delta \in \Sigma}
\det\left[t_{\delta(j)}^{-i-j}\right]_{\,i,\,j\,=\,1}^{n}\,=\,
\left(\det\left[t_{j}^{-i}\right]_{\,i,\,j\,=\,1}^{n}\right)^2\,.
\end{eqnarray}
Now, since no $\delta\in \Sigma$ influences  the value of
$$\int_{-\infty}^{\infty}\cdots\int_{-\infty}^{\infty}
\det\left[t_{\delta(j)}^{-i-j}\right]_{\,i,\,j\,=\,1}^{n}
\,d\nu_x(t_1)\cdots \,d\nu_x(t_n)\,,
$$
 by integrating (\ref{AV-2}),
the second integral in (\ref{Integrals}) equals
$$
\frac{1}{n!}\int_{-\infty}^{\infty}\cdots\int_{-\infty}^{\infty}
\left(\det\left[t_{j}^{-i}\right]_{\,i,\,j\,=\,1}^{n}\right)^2
\,d\nu_x(t_1)\cdots \,d\nu_x(t_n)\,.
$$

The argument for the integral in (\ref{Integrals}) corresponding to $\alpha_x$ is similar.

In conclusion,
\begin{eqnarray}\label{DET-THM-1-NICE-FORM}
\mathcal{D}^{\,\vartheta, \,g_R}_{n}(x)\;=&&
\displaystyle{\det \left[\frac{{\mathcal{S}}^{\,\vartheta,
\,g_R}_{i+j-1}(x)}{(i+j-1)!}\right]_{\,i,\,j\,=\,1}^{n}}\\\notag
=&&\displaystyle{\frac{\alpha_x}{(n-1)!}\,
\int_{-\infty}^{\infty}\cdots\int_{-\infty}^{\infty}
\left(\det\left[t_{j}^{-i}\right]_{\,i,\,j\,=\,2}^{n}\right)^2
\,d\nu_x(t_2)\cdots \,d\nu_x(t_n) \;+}\\\notag
&&\;\;\;+\;\displaystyle{
\frac{1}{n!}\int_{-\infty}^{\infty}\cdots\int_{-\infty}^{\infty}
\left(\det\left[t_{j}^{-i}\right]_{\,i,\,j\,=\,1}^{n}\right)^2
\,d\nu_x(t_1)\cdots \,d\nu_x(t_n) }\,,
\end{eqnarray}
which is non-negative, due to the non-negativity of $\alpha_x$ and of the measure.

 Since $x\in \mathbb{R}$ was arbitrary, Part (1) is proved.

\underline{\textbf{For Part (2).}}

By the opening paragraph of the proof of Part (1)
 it suffices to consider the case $h_{{x_0}}\,=\,f_{x_0,\,2}/f_{x_0,\,1}$,
 the quotient of
 the fundamental solutions associated to $x_0$ as in (\ref{FP})\,.

 From the hypothesis on $\vartheta$, $h_{{x_0}}$
 is real analytic on
 some neighborhood of $x_0$ and thus so is
 $\displaystyle h_{{x_0}}{\circ\,}T_{x_0}{\circ} {g_R}$
  in  some  neighborhood of the origin\,.
  Thus for all integer $n\,\geq \,1$,
  \begin{equation}\label{DET}
  \det\left[\frac{\left(h_{{x_0}}{\circ}T_{x_0}{\circ\,} {g_R}
  \right)^{(i+j-1)}}{(i+j-1)!}(0)
  \right]_{\,i,\,j\,=\,1}^{n}
  \,\,\,=\,\,\,
{\mathcal{D}}^{\,\vartheta, \,R}_{n}(x_0)
\,\,\,\geq \,\,\,0\,.\end{equation}
 Here the equality is valid by Proposition \ref{H-SCHWARZ} together with
   $(h_{{x_0}}{\circ}T_{x_0}{\circ\,} {g_R} )^{(n)}(0)\,=\,
   \mathcal{S}^{\,\vartheta, \,g_R}_{n}(x_0)$, while the inequality holds
by hypothesis.

\leftline{\textbf{Claim}: \emph{From  (\ref{DET}),
it follows that $h_{{x_0}}{\circ}T_{x_0}{\circ\,} {g_R}$ extends as a Pick function.}}

\begin{proof}[Proof of Claim]
The claim is a  result by
Bendat and Sherman \cite{Bendat-Sherman} plus compositions
of functions that we want to keep track of.
We
rely on the solution of the classical Hamburguer moment problem \cite{WIDDER-LT} which
given the non-negativity of the Hankel determinants
guarantees the existence
of a non-decreasing function
\begin{equation}\label{MU-depend}
\mu_{x_0}=\mu^{\,\vartheta,\, g_R}_{ \,x_0}
\colon\mathbb{R}\to\mathbb{R}
\end{equation}
such that
 in terms of Stieltjes integrals,
 \begin{equation}\label{HMP}
 \,(h_{{x_0}}{\circ}\,T_{x_0}{\circ\,} {g_R} )^{(n)}(0)\,=\, n!
 \int_{-\infty}^{\infty} t^{n-1} d\mu_{x_0 } (t)
 \,.
 \end{equation}
 Then, by the real analyticity
 $h_{{x_0}}{\circ }\, T_{x_0} {\circ\,} {g_R} $
 in a neighborhood of $x\,=\,0$,
 \begin{eqnarray}
 F^{\,\vartheta,\,g_R}_{x_0}(z)
 \,&:=&\,
\sum_{n\,=\,1}^{\infty}
(h_{x_0} {\circ}\, T_{x_0} {\circ\,} {g_R} )^{(n)}(0)\;\frac{z^n}{n!}\\\notag
\,&=&\,
\sum_{n\,=\,1}^{\infty}\left(\int_{-\infty}^{\infty} t^{n-1}
d\mu_{x_0}(t)\right) {z^n}\,,
\end{eqnarray}
is
absolutely convergent
 for $\displaystyle z\,\in\, \mathcal{O}_{x_0}
 =\mathcal{O}^{\,\vartheta,\,g_R}_{x_0}
 =\{z\in\mathbb{C},\,|z|\,<\,\rho_{x_0}\}$,  where
 \begin{equation}\label{RHO-depend}
 \rho_{\,x_0}=\rho^{\vartheta,\,g_R}_{\,x_0}\,>\,0
 \end{equation}
  is the radius
 of convergence of $h_{{x_0}}{\circ }\, T_{x_0} {\circ\,} {g_R}$\,.

 The measure
  $d\mu_{x_0}$
 is  supported in the
 interval
 \begin{equation}\notag
 [\,-1/\rho_{x_0},
 \,1/\rho_{x_0}\,]
 \subset \mathbb{R}\,,
 \end{equation}
since otherwise, there is a constant $C\,>\,1/\rho_{x_0}$
such that for all
  integer $m\,\geq\,1$
  $$\int_{-\infty}^\infty\,t^{2\,m}\, d\mu_{x_0}(t)
  \,\geq\,C^{2\,m}\,\left(
 \int_{-\infty}^{-C}\, d\mu_{x_0}(t)+\int_{C}^\infty\,
 d\mu_{x_0}(t)\right)\,>\,0\;,$$ and thus
 $\lim_{m\mapsto \infty}\sqrt[2\,k+1]{\int_{-\infty}^\infty\,t^{2\,m}\,
 d\mu_{g_R,\,x_0}}\,\geq\, C \,>\,1/\rho_{x_0}$, which would
 imply a radius of convergence
 of $h_{x_0}\,{\circ}\, T_{x_0}\, {\circ}{g_R}$
 strictly smaller that $\rho_{x_0}$, a contradiction.

 So we now interchange
 summations in
 \begin{eqnarray}\label{SUM2}
 F^{\,\vartheta,\,g_R}_{x_0}(z)\,&=&\,
\sum_{n\,=\,1}^{\infty}\left(
\int_{-1/\rho_{x_0}}^{1/\rho_{x_0}} t^{n-1}
d\mu_{x_0}(t)\right) {z^n}\,,
\end{eqnarray}
to obtain for $z\in \mathcal{O}_{x_0}$
 \begin{equation}\label{F-REPRESENT}
 F^{\,\vartheta,\,g_R}_{x_0}(z)\,
 = \, \int_{-1/\rho_{x_0}}^{1/\rho_{x_0}}
 \frac{z}{1-t\,z} \; d\mu_{x_0}(t)\,.
 \end{equation}
Since, by construction,
on the interval  $(-\rho_{x_0}, \,\rho_{x_0})\,=\,\mathcal{O}_0\,\cap\, \mathbb{R}$
we have the agreement
   $$F^{\,\vartheta,\,g_R}_{x_0}\,\equiv\,
 h_{{x_0}}{\circ}\, T_{x_0} {\circ\,} {g_R}\,,$$
 the function
  (\ref{F-REPRESENT}) represents a holomorphic  extension of
  $h_{{x_0}}{\circ} \,T_{x_0} {\circ\,} {g_R} $ which is now defined on
 $$\left(\mathbb{C}\,\setminus\,\mathbb{R}\right)\,\cup\,
 \mathcal{O}_0\,,$$ and
 that maps $H_+$ to itself.  This is the extension as a Pick function
 derived from the conditions (\ref{DET}) whose existence was
 claimed.
\end{proof}
\vskip.2cm
 Now, from this, the function
\begin{equation}\label{h0-extended-1}
 \widehat{h}_{{x_0}}\,:
 =\,F^{\,\vartheta, \,g_R}_{\,x_0}\,{\circ} \,g^{-1}_R {\circ\,}T_{-x_0}
\end{equation}
is
 holomorphic on
 \begin{equation}\label{Dom-h}
 H^R_+\,\cup \,H^R_-\,\cup\,
 \mathcal{U}_{x_0},
 \end{equation}
where
$\displaystyle \mathcal{U}_{x_0}=T_{x_0}\left(g_R \left(\mathcal{O}_0\right)\right)$
 is an open neighborhood of $x_0$ in $\mathbb{C}$,
and  maps  $H^R_+$ to $H_+$\,.

This means that
$\widehat{h}_{x_0}$ provides an extension of $h_{x_0}$ as a Pick function
with domain (\ref{Dom-h})\,.
We now  check that  it extends
across $\mathbb{R}
\setminus f_{x_0,\,1}^{-1}(0)$ as well, as a Pick function,
which will complete the proof.

To see this last point, note that, since the original $h_{x_0}$
is real analytic on $\mathbb{R}
\setminus f_{x_0,\,1}^{-1}(0)$,
there is
an open set $\mathcal{U}$
of $\mathbb{C}$,  with
 \begin{equation}\label{INC-1}
 x_0\,\in\,
\mathbb{R}\,\setminus
f_{x_0,\,1}^{-1}(0)\,
\subset\,\mathcal{U}\,,
\end{equation}
and
a holomorphic function
$$\widetilde{h}_{x_0}\colon \mathcal{U}\to \mathbb{C}$$
so that on
$\mathbb{R}
\setminus f_{x_0,\,1}^{-1}(0)$,
$$\widetilde{h}_{x_0}\,\equiv \,h_{x_0}\,.$$

In particular,
$\widetilde{h}_{x_0}$ is determined near $x_0$
 by the Taylor series of $h_{x_0}$
centered at $x_0$, which has a non-zero radius of convergence, say $r_0$.
So, on the neighborhood of $x_0$  in $\mathbb{C}$ given by
$$\{z\,\in\, \mathbb{C},\,|z-x_0|<r_0\}
\,\cap\, \mathcal{O}_0\,,$$
we have the coincidence
\begin{equation}\label{agree1}
\widetilde{h}_{x_0}\,\equiv\,\widehat{h}_{x_0}\,.
\end{equation}
Thus, by uniqueness of analytic continuation,
the identity (\ref{agree1}) also holds
in $$\mathcal{U}\cap\,H^R_+\,,$$
and, similarly, in $$\mathcal{U}\cap\,H^R_-\,.$$
This means, in light of (\ref{INC-1}),  that $\widetilde{h}_{x_0}$ provides an analytic extension
 of $\widehat{h}_{x_0}$ across the set $\mathbb{R}\setminus
 f_{x_0,\,1}^{-1}(0)$, where the function
 $\widetilde{h}_{x_0}$ assumes only real values. It follows that
 $\widehat{h}_{x_0}$ is now analytically extended
 as a Pick function with domain
   $$H^R_+\,\cup \,H^R_-\,\cup\,
   \mathbb{R}\setminus f_{x_0,\,1}^{-1}(0)\,
   =\,\mathbb{C}\setminus f_{x_0,\,1}^{-1}(0)\,.$$
   It extends meromorphically on $\mathbb{C}$ with  (simple) poles at
 $f_{x_0,\,1}^{-1}(0)\subset \mathbb{R}$\,.

This proves part (2).
 \end{proof}

\begin{Rem}\label{D-at-x0}
Theorem \ref{TH-EXTEND-FINITE-R} shows that, for $\vartheta$  real analytic
in $\mathbb{R}$,
if there is an $x_0\in \mathbb{R}$ so that for all $n\,\geq\,1$
$\mathcal{D}^{\,\vartheta, \,g_R}_{n}(x_0)\,\geq \,0$
 then
  $\mathcal{D}^{\,\vartheta, \,g_R}_{n}(x)\,\geq\,0$
for all $n\,\geq\,1$ and  for all  $x\in \mathbb{R}$. This will be used in Corollary
\ref{COR-REAL-ANALYTIC}.
\end{Rem}

\subsection{A simple curvature estimate for the measure}\
\vskip.2cm
We make a few comments on the relation exploited in the proof of Theorem \ref{TH-EXTEND-FINITE-R}, of the Schwarzians on $\mathbb{R}$ and  moments.

Recall from the proof of part (2) of Theorem \ref{TH-EXTEND-FINITE-R} its equivalence to
the possibility of the Schwarzians being
point-wise expressible as moments
\begin{equation}\label{SCHW-MOM}\fbox{$\displaystyle
{\mathcal{S}}^{\,\vartheta,\,g_R}_n(x)\,
 = \,n!\,  \int_{-1/\rho_{x}}^{1/\rho_{x}}
 \,t^{n-1}\; d\mu_{x}(t)$}\,,
 \end{equation}
 where, for each $x$, $d\mu_{x}(t)=d\mu_x^{\,\vartheta,\,g_R}(t)$ is a non-negative  Borel measure in $\mathrm{R}$ with
\begin{equation}\label{SUPPORT-MU-depend-on-x}
       \mathrm{Support}\,d\mu_{x}(t)\,\subset\,[-1/\rho_{x},\;1/\rho_{x}]
\end{equation}
for some
$$
\rho_{x}\,=\,\rho^{\,\vartheta,\, g_R}_{ \,x}\,\in\,(0,\,\infty]\,.
$$
There is a very simple estimate for the  support of that measure in terms of curvature as follows.

\begin{Prop}\label{PROP-SUPP-EST}
Assume $\vartheta$ is as in part (1) of Theorem \ref{TH-EXTEND-FINITE-R}.
Take
$\displaystyle g_R(z)\,=\,-\frac{R}{\pi}\,\ln \left(1-\frac{\pi\,z}{R}\right)$\,.
Set $\sup \vartheta\,=\,\sup_{x\in \mathbb{R}}{\vartheta}$.
Then for any $x\in \mathbb{R}$,
\begin{itemize}
\item[(1)] if $\sup \vartheta\,\leq 0$,
 $\displaystyle \mathrm{ Support }\left({d\mu^{\,\vartheta,\,g_R}_x}\right)\,\subset\,
  \left[\,-\frac{\pi}{R}\;,\;\;\frac{\pi}{R}\,\right]$\,,
interpreted as the set $\{0\} \textrm{ if } R\,=\,\infty$\,;\\
\item[(2)] if $\sup{\vartheta}\,>\,0$,
$\displaystyle
\mathrm{ Support }\left({d\mu^{\,\vartheta,\,g_R}_x}\right)\,\subset\,
  \left[{\displaystyle -\frac{\pi}{R-\,R\,e^{-\frac{\pi^2}{2\,R\sqrt{\sup \vartheta}}}}\;,\;
  \;\frac{\pi}{R-\,R\,e^{-\frac{\pi^2}{2\,R\sqrt{\sup \vartheta}}}}\,}\right]\,,
  $

interpreted as
the interval
 $\left[{\displaystyle  -\frac{2\,\sqrt{\sup{\vartheta}}}{\pi}\;,\;
  \;\frac{2\,{\sqrt{\sup{\vartheta}}}}{\pi}} \right]$ if $R\,=\,\infty$.
 \end{itemize}
\end{Prop}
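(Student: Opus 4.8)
The plan is to reduce everything to a single quantity: the radius of convergence $\rho_x$ of the composite $h_x\circ T_x\circ g_R$ analyzed in the proof of Theorem \ref{TH-EXTEND-FINITE-R}. The discussion preceding the proposition records that $\mathrm{Support}(d\mu_x)\subset[-1/\rho_x,\,1/\rho_x]$, with $\rho_x$ the radius of convergence of $h_x\circ T_x\circ g_R$ (cf. (\ref{RHO-depend}), (\ref{SUPPORT-MU-depend-on-x})), so it suffices to bound $\rho_x$ from below. Since $\rho_x$ is the distance from $0$ to the nearest singularity of $z\mapsto h_x\big(x+g_R(z)\big)$, I would first locate those singularities. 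With $g_R(z)=-\tfrac{R}{\pi}\ln\!\big(1-\tfrac{\pi z}{R}\big)$, the principal branch is holomorphic on $\mathbb{C}\setminus[R/\pi,\infty)$, whose nearest singular point to the origin is the logarithmic branch point at $z=R/\pi$; moreover for $|z|<R/\pi$ the point $w=1-\tfrac{\pi z}{R}$ satisfies $|w-1|=\tfrac{\pi}{R}|z|<1$, hence $w$ lies in the right half-plane and $g_R$ maps the disc $\{|z|<R/\pi\}$ into the strip $\{|\Im w|<R\}$ (indeed into $\{|\Im w|<R/2\}$). Because $h_x$ satisfies hypothesis (1) and $W(f_{x,1},f_{x,2})=1>0$, it is holomorphic on $H^R_+$ with positive imaginary part, hence by reflection across $\mathbb{R}$ holomorphic on all of $\{|\Im w|<R\}$ except for poles lying on $\mathbb{R}$. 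Consequently the only singularities of the composite inside $\{|z|<R/\pi\}$ occur at real $z$ for which $x+g_R(z)$ is a pole of $h_x$, and $\rho_x\ge\min\{R/\pi,\ \text{distance to the nearest such }z\}$.

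The poles of $h_x$ are the zeros of $f_{x,1}$, the fundamental solution of (\ref{DEQA}) with $f_{x,1}(x)=1,\ f_{x,1}'(x)=0$ (Definition \ref{Def-FPS}), so the next step is to bound the distance from $x$ to the nearest zero of $f_{x,1}$. If $\sup\vartheta\le 0$, then $f_{x,1}''=-\vartheta f_{x,1}\ge 0$ wherever $f_{x,1}\ge 0$, so $f_{x,1}$ is convex and stays $\ge 1$: it has no zeros, $h_x$ has no poles, and $\rho_x\ge R/\pi$, which yields case (1). If $\sup\vartheta>0$, set $c=\sup\vartheta$ and compare $f_{x,1}$ with $u(t)=\cos\!\big(\sqrt{c}\,(t-x)\big)$, which solves $u''+cu=0$ with the same Cauchy data and first vanishes at distance $\pi/(2\sqrt c)$. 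The Wronskian $W(t)=f_{x,1}u'-f_{x,1}'u$ satisfies $W'=(\vartheta-c)f_{x,1}u\le 0$ on any interval where $f_{x,1},u>0$, and $W(x)=0$; evaluating at a putative first zero of $f_{x,1}$ in $(x,\,x+\pi/(2\sqrt c))$ forces both $f_{x,1}$ and $f_{x,1}'$ to vanish there, contradicting uniqueness. Hence the nearest zero of $f_{x,1}$, on either side of $x$ by the symmetric argument, lies at distance at least $d_0:=\pi/(2\sqrt{\sup\vartheta})$.

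It remains to transport this bound through $g_R$. Solving $g_R(z)=d$ for real $d>0$ gives $z=\tfrac{R}{\pi}\big(1-e^{-\pi d/R}\big)$, which is increasing in $d$; the preimage of $-d$ is $\tfrac{R}{\pi}\big(1-e^{\pi d/R}\big)$, farther from $0$ since $\cosh(\pi d/R)>1$. Thus a pole at signed distance $d$ from $x$ with $|d|\ge d_0$ pulls back to a point at distance at least $\tfrac{R}{\pi}\big(1-e^{-\pi d_0/R}\big)<R/\pi$ from the origin, so
\[
\rho_x\ \ge\ \frac{R}{\pi}\Big(1-e^{-\pi d_0/R}\Big)\ =\ \frac{R}{\pi}\Big(1-e^{-\pi^2/(2R\sqrt{\sup\vartheta})}\Big).
\]
Taking reciprocals gives the interval in case (2), and letting $R\to\infty$ (so that $g_R(z)\to z$ and the bound reduces to $\rho_x\ge d_0$) recovers the two stated limiting intervals $\{0\}$ and $[-2\sqrt{\sup\vartheta}/\pi,\,2\sqrt{\sup\vartheta}/\pi]$.

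I expect the main obstacle to be the zero-free estimate for $f_{x,1}$: the Sturm/Wronskian comparison must be set up with exactly matched Cauchy data at $x$ and handled symmetrically on both sides, and one must verify that no complex singularity of the composite can intervene before the real pole-preimage or the branch point. This last point is precisely what the containment $g_R(\{|z|<R/\pi\})\subset\{|\Im w|<R\}$ together with the reality of the poles of $h_x$ are there to guarantee; the remaining steps, namely the explicit inversion of $g_R$ and the $R\to\infty$ limits, are routine.
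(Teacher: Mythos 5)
Your proposal is correct and follows essentially the same route as the paper: both reduce to lower-bounding the radius of convergence $\rho_x$ of $h_x\circ T_x\circ g_R$ via the support inclusion (\ref{SUPPORT-MU-depend-on-x}), bound the distance from $x$ to the nearest zero of $f_{x,1}$ by Sturm comparison with $\cos\left(\sqrt{\sup\vartheta}\,(t-x)\right)$, and pull that distance back through the explicit $g_R$, using $e^t-1>1-e^{-t}$ to take the worse (left) side. You merely spell out two points the paper leaves as standard or implicit — the Wronskian form of the comparison argument and the verification, via $g_R(\{|z|<R/\pi\})\subset\{|\Im w|<R\}$ and the reality of the poles of $h_x$, that no non-real singularity can shrink $\rho_x$ below the real pole-preimage — which is a welcome tightening rather than a different proof.
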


\begin{proof}
Let $x\,\in\,\mathbb{R}$ be arbitrary. With notation as in the proof of Theorem
\ref{TH-EXTEND-FINITE-R},
for some $\rho_{x}>0$ the series in of $z$ of
$
h_{{x}}{\circ\,}T_{x}\,{\circ} {\mathrm{g}}_R$ is
absolutely convergent
 for $|z|\,<\,\rho_{x}\,$, and necessarily, given
 our choice of ${\mathrm{g}}^R$,
 $$(-\rho_{x},\,\rho_{x})\,\subset\, \left(
 \frac{R}{\pi}\left(1-e^{\frac{\pi}{R}\epsilon_1(x)}\right),\,
  \frac{R}{\pi}\left(1-e^{-\frac{\pi}{R}\,\epsilon_2(x)}\right)\right)\,
  \subset\left(-\infty,\,\frac{R}{\pi}\right)$$ with
  $\epsilon_1(x)\,:=\,\sup_{t\,>\,0}\{\,t\,|\,(x-t,\,x)\subset
  \mathbb{R}\setminus{f_1^{-1}(0)}\}$,
  $
  \epsilon_2(x)\,:=\,\sup_{t\,>\,0}\{\,t\,|\,(x,\,x+t)\subset
  \mathbb{R}\setminus{f_1^{-1}(0)}\}.$
 Thus, we may take
 \begin{eqnarray}\notag
 \rho_{x}\,&=&\,
 \min
  \left\{\frac{R}{\pi}\left(e^{\frac{\pi}{R}\epsilon_1(x)}-1\right),\,
  \frac{R}{\pi}\left(1-e^{-\frac{\pi}{R}\,\epsilon_2(x)}\right)
  \right\}\\\notag
  &\overset{(i)}{\geq}&\,
 \min \left\{\frac{R}{\pi}\left(1-e^{-\frac{\pi}{R}\epsilon_1(x)}\right),\,
  \frac{R}{\pi}\left(1-e^{-\frac{\pi}{R}\,\epsilon_2(x)}\right)\right\}
  \\\notag
  &\overset{(ii)}{=}&\,\frac{R}{\pi}\left(1-e^{-\frac{\pi}{R}\min\{\epsilon_1(x),
      \,\epsilon_2(x)\}}\right)
      \\\notag
  &\overset{(iii)}{\geq}&\,\begin{cases}
  \frac{R}{\pi}\,\textrm{ if } \vartheta \,\leq\,0\,,
  \\\\ {\displaystyle
  \frac{R}{\pi}
  \left(1-e^{-\frac{\pi^2}{2\,R\sqrt{\sup{\vartheta}}}}
  \right)}\,
  \textrm{ if } \sup{\vartheta}\,>\,0\,,
  \end{cases} \end{eqnarray}
  where (i) is valid since $e^t-1\,>\,1-e^{-t}$ for $t\,>\,0$;
   (ii) holds because $1-e^{-t}$ is an increasing function;
  inequality (iii) follows from a standard Sturm comparison
  of the location of zeros of  solutions of
  $f^{\prime\prime}(t)+\vartheta(t)\,f(t)\,=\,0$, with
  those of $\cos(\sqrt{K}(t-x))$,
  with $K\,=\,\sup{\vartheta}$.

Now, use \ref{SUPPORT-MU-depend-on-x} and vary $x_0$.
  \end{proof}

\section{Higher order Schwarzians and related functions in the tangent bundle}
\label{SECTION-SCHWAR-CURV}

The higher order Schwarzians ${\mathrm{S}}^{\,\vartheta,\,R}_n$ defined on $\mathbb{R}$ have canonical expressions
 in $\vartheta$ and its derivatives which Tamanoi proved \cite{Tamanoi} (See Remark \ref{TPPI}).
 This will be the basis for our definition of the  functions ${\mathrm{S}}^{\,\sigma,\,R}_n$ in
the tangent bundle .

 We will
 re-derive Tamanoi's  expressions in the spirit of the last section in
 reference
 \cite{LESZ}, using Proposition \ref{H-SCHWARZ} and  the
 equation (\ref{DEQA})
 directly, in order to keep track of the  weight $\operatorname{w}$ introduced  below used  to describe  homogeneity properties
of the  functions ${\mathrm{S}}^{\,\sigma,\,R}_n$.

\subsection{Schwarzians in terms of curvature via Tamanoi polynomials}

\begin{Def}\label{t-degree}
A monomial in  $\{\,\vartheta, \vartheta^{\prime}, \ldots, \vartheta^{(r)}, \ldots\}$
is said to have weight $\operatorname{w}$  according to
$$ \operatorname{w} \left\{\prod_{i=1}^n
{\left(\vartheta^{(r_i)}(t)\right)}^{l_i}\right\}
=\sum_{i=1}^n (2+r_i)\,l_i.
$$

A polynomial all of whose terms have the same $\operatorname{w}$  will
be called \emph{isobaric}.
\end{Def}
\begin{Prop}\label{Prop-degree}
Let $m=\prod_{i=1}^n {\left(\vartheta^{(r_i)}\right)}^{l_i}(t)$ be a monomial
in $\vartheta$ and its derivatives.
Then \begin{itemize}
\item[(i)]$\operatorname{w}\{\vartheta(t)\, m(t)\}=\operatorname{td}\{m\}+2$\,;
\item[(ii)] $\operatorname{w}\{m^\prime(t)\}=\operatorname{w}\{m\}+1$\,.
\end{itemize}
\end{Prop}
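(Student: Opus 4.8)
The plan is to verify both identities directly from Definition \ref{t-degree}, since each reduces to tracking how the integer $\operatorname{w}$ changes under the two elementary operations of multiplying by $\vartheta$ and of differentiating a monomial.

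For part (i), I would simply observe that $\vartheta = \vartheta^{(0)}$, so that multiplying the monomial $m$ by $\vartheta$ increases by one the exponent attached to the factor $\vartheta^{(0)}$ (or introduces such a factor with exponent $1$ if it was absent). Since each unit of exponent on $\vartheta^{(0)}$ contributes $2+0 = 2$ to the weight, the weight increases by exactly $2$, giving $\operatorname{w}\{\vartheta\, m\} = \operatorname{w}\{m\}+2$.

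For part (ii), the key point is that $m'$ is in general not a single monomial but a sum of them, so the content of the claim is that this sum is isobaric of the asserted weight. Applying the Leibniz rule factor by factor, I obtain
$$ m' = \sum_{i=1}^n l_i\, \vartheta^{(r_i+1)}\, \bigl(\vartheta^{(r_i)}\bigr)^{l_i-1} \prod_{k\neq i} \bigl(\vartheta^{(r_k)}\bigr)^{l_k}. $$
Each summand is obtained from $m$ by deleting one occurrence of $\vartheta^{(r_i)}$, of weight $2+r_i$, and inserting one occurrence of $\vartheta^{(r_i+1)}$, of weight $2+(r_i+1) = 3+r_i$; the numerical coefficient $l_i$ does not affect the weight. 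Hence each summand has weight $\operatorname{w}\{m\} - (2+r_i) + (3+r_i) = \operatorname{w}\{m\}+1$, independent of $i$. Since all terms share this common value, the sum is isobaric with $\operatorname{w}\{m'\} = \operatorname{w}\{m\}+1$, as claimed.

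I do not anticipate any genuine obstacle: both statements are bookkeeping consequences of the definition of $\operatorname{w}$. The only point requiring a little care is the observation in part (ii) that differentiation produces a sum of monomials rather than a single one, so that the substance of the statement is the constancy of the weight across all of these terms; once the per-factor weight increment of $+1$ is recorded, isobaricity follows at once.
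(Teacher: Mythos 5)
Your proof is correct and takes essentially the same route as the paper: a direct bookkeeping verification from Definition \ref{t-degree}, the only difference being that the paper first reduces to a single factor $m=\left(\vartheta^{(r)}\right)^{l}$ and computes $\operatorname{w}\{m'\}=(2+r)(l-1)+(2+r+1)=\operatorname{w}\{m\}+1$, whereas you apply the Leibniz rule to the full product and check that every summand carries the common weight $\operatorname{w}\{m\}+1$. If anything, your version makes explicit the point the paper's reduction leaves tacit, namely that $m'$ is a sum of monomials and the content of (ii) is the constancy of the weight across that sum.
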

\begin{proof}
It is enough to show this for $m\,=\,{\left(\vartheta^{(r)}\right)}^l$.
Now, (i) is
clear since $\operatorname{w}\{\vartheta\}\,=\,2$. For (ii), note
$m^\prime\,=\,l\, {\left(\vartheta^{(r)}\right)}^{l-1}\,\vartheta^{(r+1)}$ and hence
$$\operatorname{w}\{m^\prime\}=(2+r)\,(l-1)+(2+r+1)=(2+r)\,l+1
=\operatorname{w}\{m\}+1\,.$$
\end{proof}

We note, $\operatorname{w}\{\vartheta\}=2$,
$\operatorname{w}\{\vartheta^\prime\}=3$,\,
$\operatorname{w}\{\vartheta^{\prime\prime}\}=4$,\,
$\operatorname{w}\{\left(\vartheta^\prime\right)^2\}=
\operatorname{w}\{\vartheta^{(4)}\}=6$,
hence the Schwarzian
${\mathcal{S}}_7^{\,\vartheta}=\vartheta^{(4)}
         +76\,\vartheta^{\prime\prime} \vartheta
         +52\,\left(\vartheta^\prime\right)^2
       +272\,\vartheta^3$ is isobaric with weight $\operatorname{w}$ equal to $6$.

In general, we have the following

\begin{Prop}\label{Schwarzians-in-derivatives}
The Schwarzian ${\mathcal{S}}_n^{\,\vartheta}$ can
be expressed as a polynomial in $\vartheta$ and its derivatives up to order $n-3$,
\begin{equation}\label{PI-N}
\pi_n(\vartheta, \,\vartheta^{\prime},\, \ldots,\, \vartheta^{(n-3)}),
\end{equation}
 which is isobaric of weight $\operatorname{w}$ equal to $n-1$.
\end{Prop}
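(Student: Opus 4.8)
The plan is to argue by induction on $n$, taking the base cases $\mathcal{S}^{\,\vartheta}_0=0$, $\mathcal{S}^{\,\vartheta}_1=1$, $\mathcal{S}^{\,\vartheta}_2=0$, $\mathcal{S}^{\,\vartheta}_3=2\,\vartheta$ from Remark \ref{3Schwarz}; already $\mathcal{S}^{\,\vartheta}_3=2\,\vartheta$ exhibits the asserted top order ($n-3=0$) and weight ($n-1=2$). Following the stated spirit of the section I would avoid quoting the recursion (\ref{HOS-REC-FORMULA}) and instead extract everything from Proposition \ref{H-SCHWARZ}(1), namely $\mathcal{S}^{\,\vartheta}_n(x)=h_x^{(n)}(x)$ with $h_x=f_{x,2}/f_{x,1}$, together with the equation (\ref{DEQA}). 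The key device is to pass to logarithmic derivatives, which converts the linear equation (\ref{DEQA}) into a Riccati equation whose coefficients are exactly $\vartheta$ and its derivatives.

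Concretely, normalizing by the (constant, unit) Wronskian one has $h_x'=1/f_{x,1}^2=:u$; set also $p:=f_{x,1}'/f_{x,1}$. Then (\ref{DEQA}) gives the Riccati relation $p'=-p^2-\vartheta$ with $p(x)=0$, while $u'=-2\,p\,u$ with $u(x)=1$. Differentiating the Riccati relation repeatedly and evaluating at $x$ shows, by induction on $j$, that $p^{(j)}(x)$ is a polynomial in $\vartheta,\vartheta',\dots,\vartheta^{(j-1)}$; feeding this into the Leibniz expansion of $u'=-2\,p\,u$ shows likewise that $u^{(j)}(x)$ is a polynomial in $\vartheta,\dots,\vartheta^{(j-2)}$. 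Since $\mathcal{S}^{\,\vartheta}_n(x)=h_x^{(n)}(x)=u^{(n-1)}(x)$, this produces the required polynomial $\pi_n$ in $\vartheta,\dots,\vartheta^{(n-3)}$.

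For the isobaric claim I would extend the weight of Definition \ref{t-degree} to the auxiliary quantities by declaring $\operatorname{w}\{p\}=1$ and $\operatorname{w}\{u\}=0$. These assignments are consistent: in $p'=-p^2-\vartheta$ every term carries weight $2$ (using $\operatorname{w}\{\vartheta\}=2$ together with part (ii) of Proposition \ref{Prop-degree}), and in $u'=-2\,p\,u$ both sides carry weight $1$. An immediate induction then gives that $p^{(j)}(x)$ is isobaric of weight $j+1$ and $u^{(j)}(x)$ is isobaric of weight $j$, so that $\mathcal{S}^{\,\vartheta}_n=u^{(n-1)}$ is isobaric of weight $n-1$, as asserted. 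The same bookkeeping can be read off more quickly from the recursion (\ref{HOS-REC-FORMULA}): the term $(\mathcal{S}^{\,\vartheta}_n)'$ raises the weight from $n-1$ to $n$ by Proposition \ref{Prop-degree}(ii), while each term $\vartheta\,\mathcal{S}^{\,\vartheta}_k\,\mathcal{S}^{\,\vartheta}_{n-k}$ has weight $2+(k-1)+(n-k-1)=n$, matching $(n+1)-1$.

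The only genuinely delicate point is the sharp order bound $\vartheta^{(n-3)}$, which must be tracked alongside the weight rather than deduced from it. In the recursion formulation this is transparent: the highest derivative comes solely from $(\mathcal{S}^{\,\vartheta}_n)'$, giving order $(n-3)+1=(n+1)-3$, whereas the quadratic terms involve at most $\vartheta^{(n-4)}$; one must, however, treat the degenerate factors $\mathcal{S}^{\,\vartheta}_1=1$ and $\mathcal{S}^{\,\vartheta}_2=0$ separately so that the summation ranges and the weight-$0$ convention for constants are applied correctly. I expect this coordination of the order bound with the consistent extension of $\operatorname{w}$ to the auxiliary variables $p$ and $u$ to be the main obstacle; the polynomiality itself is a routine consequence of the Riccati structure.
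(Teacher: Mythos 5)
Your argument is correct, and it takes a genuinely different route from the paper's. The paper also works from Proposition \ref{H-SCHWARZ}(1) and the equation (\ref{DEQA}), but it stays linear: it differentiates the identity $2\,h_x^\prime f_{x,1}^\prime + h_x^{\prime\prime} f_{x,1}=0$ (its equation (\ref{BF})) $n$ times, organizing the result as $\beta_{x,n}f_{x,1}^\prime+\alpha_{x,n}f_{x,1}=0$ with $(\alpha_{x,n},\beta_{x,n})$ generated by powers of the matrix operator $\left(\begin{smallmatrix}\partial_t&-\vartheta\\ 1&\partial_t\end{smallmatrix}\right)$, and carries an inductive hypothesis describing exactly which monomials $h_x^{(k)}\prod_i(\vartheta^{(r_i)})^{l_i}$ may occur, constrained by $k+\sum_i(2+r_i)l_i=n+2$, $k\le n+1$; evaluating at $t=x$ (where $\alpha_{x,n}$ vanishes, by the initial conditions) turns the surviving $h_x^{(k)}(x)$ into lower Schwarzians and closes the induction, producing in passing a recursion for $\mathcal{S}^{\,\vartheta}_{n+2}$ in terms of lower ones. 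Your substitution $p=f_{x,1}^\prime/f_{x,1}$, $u=h_x^\prime=f_{x,1}^{-2}$ replaces this with the Riccati system $p^\prime=-p^2-\vartheta$, $u^\prime=-2pu$, $p(x)=0$, $u(x)=1$, and the extension of $\operatorname{w}$ by $\operatorname{w}\{p\}=1$, $\operatorname{w}\{u\}=0$ makes the weight bookkeeping essentially automatic, since both differential relations are then homogeneous and Proposition \ref{Prop-degree}(ii) propagates through the induction; this is arguably tidier than the paper's monomial-tracking hypothesis, at the cost of not directly exhibiting the recursion among the $\mathcal{S}^{\,\vartheta}_k$ themselves.

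One small conceptual correction: the ``genuinely delicate point'' you flag is not delicate at all. The sharp order bound $\vartheta^{(n-3)}$ is an immediate consequence of isobaricity of weight $n-1$, not something that must be tracked alongside it: any monomial containing a factor $\vartheta^{(r)}$ has weight at least $2+r$ by Definition \ref{t-degree}, so $2+r\le n-1$ forces $r\le n-3$. This is exactly how the bound is implicit in the paper's weight constraints, and it renders your separate tracking of the order (both in the Riccati induction and in the discussion of (\ref{HOS-REC-FORMULA})) redundant, though harmless.
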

\begin{proof} For convenience of reference  let us recall here that
by Proposition \ref{H-SCHWARZ}, at a point $x$
\begin{equation}\label{S=H}
{\mathcal{S}}^{\,\vartheta}_n(x)=h_{x}^{(n)}(x)
\end{equation}
where
$h_{x}(t)\,=\,\frac{f_{x,\,2}(t)}{f_{x,\,1} (t)},
$
 with $f_{x,\,1}$ and $f_{x,\,2}$ as in Definition \ref{Def-FPS}, that is,
they are the  solutions of
\begin{equation}\label{EQ}
f^{\prime\prime}(t)+\vartheta(t) f(t)\,=\,0
 \end{equation}
  with the initial conditions so that
\begin{equation}\label{IC}
f_{x,\,1}(x)\,=\,f_{x,\,2}^\prime(x)\,=\, h_{x}^\prime(x)\,=\,1,
\quad f_{x,\,2}(x)\,=\,f^\prime_{x,\,1}(x)\,=\, h_{x}(x)\,
=\,h_x^{\prime\prime}(x)\,=\,0.
\end{equation}

Now, (\ref{EQ}) and two derivatives with respect to $t$
of
$
h_{x}(t)\,f_{x,\,1}(t)\,=\, f_{x,\,2}(t)$  gives
\begin{equation}\label{BF}
 2\,h_{x}^\prime(t)\,f_{x,\,1}^\prime(t)
 + h_{x}^{\prime\prime}(t)\,f_{x,\,1}(t)\,=\,0.
 \end{equation}  By use  of (\ref{EQ}) the $n$-th
 derivative with respect to  $t$ of the left-hand side of (\ref{BF}) is written
as
\begin{equation}\label{REC}
\beta_{x,\,n}(t)\, f_{x,\,1}^\prime(t)+ \alpha_{x,\,n}(t)
\, f_{x,\,1}(t)\,=\,0  \, ,
\end{equation}
 where for $n\,=\,1,\,2,\, \cdots$ the coefficients of
 $f_{x,\,1}(t)$ and $f_{x,\,1}^\prime(t)$
 are   computed by
$$\left(\begin{array}{c}
\alpha_{x,\,n}(t)\\
\beta_{x,n}(t)
\end{array}\right)=
\left(\begin{array}{cc}
\partial_t&-\vartheta(t)\\
1&\partial_t
\end{array}\right)^n
   \left(\begin{array}{c}
   h_{x}^{\prime\prime}(t)\\
      2\,h_{x}^\prime(t)
\end{array}\right)
$$
with $(\partial_t)^k y$ meant to indicate $y^{(k)}$.

Consider as inductive hypothesis that for $n\in \mathbb{N}$ it is true that
\begin{eqnarray}\label{IH1}
\alpha_{x,\,n}(t)\,&=&\,h_x^{(n+2)}(t)+p_{x,\,n}(t),\\\label{IH2}
\quad \beta_{x, \,n}(t)\,&=&\,h_{x}^{(n+1)}(t)+q_{x, n}(t),
\end{eqnarray}
where both $p_{x,\,n}(t)$ and $q_{x,\,n}(t)$ are
integer linear combinations of terms of the form
\begin{equation}\label{TYPICAL}
h_{x}^{(k)}(t)\,\prod_{i=1} \left(\vartheta^{(r_i)}(t)\right)^{l_i}
\end{equation}
with their indices $k$, $r_i$ and $l_i$ restricted for
the terms in $p_{x,\,n}(t)$ by
\begin{equation}\label{IH3}
k\,+\,\sum_{i} (2+r_i)\,l_i \,=\,n+2\,,\;\;\;\;\; k\leq n+1\,,
\end{equation}
while for the terms in $q_{x,\,n}(t)$ those indices are restricted by
\begin{equation}\label{IH4}
k\,+\,\sum_{i} (2+r_i)\,l_i \,=\, n+1\,,\;\;\;\; k\leq n\,.
\end{equation}
 Since
$\alpha_{x,\,1}(t)\,=\,\,
h_x^{\prime\prime\prime}(t)-2\vartheta(t)\,h^\prime_{x}(t)$
and $\beta_{x, \,1}(t)\,=\,-h_{x}^{\prime\prime}(t)$
then (\ref{IH1}) through
(\ref{IH4}) hold for $n=1$.
Moreover, by Proposition \ref{Prop-degree} applied to
$$
\alpha_{x,\,n+1}(t)=\alpha^\prime_{x,\,n}(t)-
\vartheta(t) \;\beta_{x,\,n}(t)\;,\;\;\;
\beta_{x, n+1}(t)=\beta^\prime_{x,\,n}(t)
+\alpha_{x,\,n}(t)
$$
it follows that  (\ref{IH1}) through
(\ref{IH4})
are valid for $n+1$ and hence for all $n\in \mathbb{N}$.

Now make the inductive hypothesis ``${\mathcal{S}}^{\,\vartheta}_{n+1}$
 is isobaric of weight $\operatorname{w}$
equal to $n$", which certainly holds for $n=0$, since
${\mathcal{S}}_1^{\,\vartheta}=1$.
We have shown that for all $n$
\begin{eqnarray}\label{alpha-n}
\alpha_{x, n}(t)\,&=&\,h_{x}^{(n+2)}(t)+\sum a_{k, r_i, l_i}\,
h_{x}^{(k)}(t)\,\prod_{i=1} \left(\vartheta^{(r_i)}(t)\right)^{l_i}
\end{eqnarray}
with $a_{k, r_i, l_i}$ integers
and with  the indices $k$, $r_i$ and $l_i$ that satisfy (\ref{IH3})\,. Thus
since by (\ref{REC}) we have $\alpha_{x, n}(x)=0$
then putting $t=x$ in (\ref{alpha-n}) gives
\begin{eqnarray}
\mathcal{S}^{\,\vartheta}_{n+2}(x) \,=
\,-\sum a_{k, r_i, l_i}\,\mathcal{S}^{\,\vartheta}_{k}(x)\,
\prod_{i=1} \left(\vartheta^{(r_i)}(x)\right)^{l_i}\,.
\end{eqnarray}
But since  according to (\ref{IH3}) $k\leq n+1$ using the inductive hypothesis
for the $\mathcal{S}^{\,\vartheta}_{k}(x)$ we get
that $$\operatorname{w}\left\{
\mathcal{S}^{\,\vartheta}_{k}(x)\,
\prod_{i=1} \left(\vartheta^{(r_i)}(x)\right)^{l_i}\right\}=
\operatorname{w}\left\{\mathcal{S}^{\,\vartheta}_{k}(x)\right\}+
\operatorname{w}\left\{
\prod_{i=1} \left(\vartheta^{(r_i)}(x)\right)^{l_i}\right\}=
k-1\,+\,\sum_{i} (2+r_i)\,l_i \,=\,n+1,
$$
the last equality of course in light of (\ref{IH3}).
Thus ${\mathcal{S}}^{\,\vartheta}_{n+2}$ is isobaric with weight $\operatorname{w}$ equal to
$n+1$.
\end{proof}

\begin{Rem}\label{TPPI}
Let $T_n$ be the Tamanoi polynomials \cite{Tamanoi}
. Then
$$\pi_n(x_1,\,\cdots,\, x_{n-3})\,=\,T_{n-1}(2\,x_1,\,\cdots,\,2\,x_{n-3})$$
Accordingly our weight  equals Tamanoi's
``virtual order" plus one.
\end{Rem}

\subsection{Schwarzians in the tangent bundle}
\label{SECTION-SCHWAR-CURV}\
\vskip.2cm
We transplant the previous construction to the context of Riemannian manifolds.
Consider $M$ to be a smooth manifold and $\pi\colon TM \to M$ its tangent bundle. Let $g$ be a smooth Riemannian metric on $M$
 with $\nabla$ the corresponding Levi-Civita connection and
  $\sigma\colon M\to \mathbb{R}$ the Gauss curvature; the
  pull-back  $$\pi^*\sigma\,=\,\sigma{\circ}\pi\colon TM\to \mathbb{R}$$
  is also denoted by $\sigma$, to simplify notation.
  For any $\mathrm{v}\in TM$ we put $\|\mathrm{v}\|^2\,=\,g(\mathrm{v},\mathrm{v})$.

\begin{Def}\label{DEF-S-TM}
The \textit{higher order Schwarzians in $TM$},
$$  {\mathrm{S}}^{\,\sigma}_n\colon TM \to \mathbb{R}$$
 for $n\,\geq\,1$,  are defined by performing in  the Schwarzians
 on $\mathbb{R}$
 from Proposition \ref{Schwarzians-in-derivatives},
 $${\mathcal{S}}_n^{\,\sigma}\,=\,\pi_n\,( \sigma,\,\sigma^{\prime},\,
 \ldots,   \,\sigma^{(n-3)}\,)$$ the substitutions
 \begin{equation}\label{SUBSTITUTE}
\sigma^{(k)}\mapsto \|\mathrm{v}\|^2 \,\nabla^k_\mathrm{v} \sigma \,.
 \end{equation}
That is, at $\mathrm{v}\,\in TM$,
  \begin{equation}\label{DEF-HOS-TM-PIN}
  {\mathrm{S}}^{\,\sigma}_n(\mathrm{v})\,:
  =\,\pi_n\,(\,\|\mathrm{v}\|^2 \,\, \sigma,\,
  \|\mathrm{v}\|^2 \,\,\nabla_\mathrm{v} \sigma,\,
  \ldots ,\, \|\mathrm{v}\|^2\,\,\nabla_\mathrm{v}^{n-3} \sigma\,)\,.
  \end{equation}
\end{Def}
\begin{Rem}
The justification of the factor $\|\mathrm{V}\|^2$ is given in Remark
 \ref{CURV-INTEGR}.
\end{Rem}
\begin{Def}\label{DEF-S-R-in-TM}
Given $R\in (0, \,\infty\,]$  and $g_R$ as in (\ref{GR-PICK})
 for $n\,\geq\,1$ the functions
 $${\mathrm{S}}^{\,\sigma,\,g_R}_{n}\colon TM\to \mathbb{R}$$
 are defined by
\begin{equation}\label{Lin-Comb-TM}\fbox{$\displaystyle
\mathrm{S}^{\,\sigma,\,g_R}_{n}(\mathrm{v})\,=\,
\sum_{k=1}^n\,
g_{R, \,n,\,k}\;\|\mathrm{v}\|^{n-k}\;
{\mathrm{S}}^{\,\sigma}_k(\mathrm{v})$},\,
\end{equation}
with constants $g_{R, \,n,\,k}$ according to (\ref{Lin-Comb-TM})
and with
  ${\mathrm{S}}^{\,\sigma}_n$ given by (\ref{DEF-HOS-TM-PIN}).
\end{Def}
\begin{Rem}\label{REM-SF-LIMITS}
It follows by  Remark \ref{LAMBDA-FOR-R-INFTY} that,
 $$\fbox{${\displaystyle \mathrm{S}^{\,\sigma,\,g_R}_{n}  \textrm { reduce in the limit $R\,=\,\infty$ to }  \mathrm{S}^{\,\sigma}_n}$}.$$
\end{Rem}
\begin{Rem}\label{onM}
If $M$ is  the zero section of $ TM$, then from (\ref{Lin-Comb-TM}),
$$\fbox{$\displaystyle
{\mathrm{S}}^{\,\sigma,\,g_R}_1\,|_{M}\,\equiv 1,\,\, \;{\mathrm{S}}^{\,\sigma,\,g_R}_n\,|_{M}\,\equiv 0,\, n\neq 1$}\,$$.

For higher $n$ and $\displaystyle g_R(z)\,=\,-\frac{R}{\pi}\,\ln \left(1-\frac{\pi\,z}{R}\right)$ see Proposition \ref{PROP-GR-LN-in-R-}, Corollary \ref{COR-GR-LN-COMP}\,.
\end{Rem}
\begin{Def}\label{DEF-DET-R-TM}
Given $R\,\in\,(0, \,\infty]$,  for $\mathrm{v}\in TM$ and integer
$n\,\geq\,1$ put
\begin{eqnarray}\label{DNMR}
{\mathrm{D}}^{\,\sigma, \,g_R}_{n}(\mathrm{v})\,&:=&\,
\det\left[
\frac{{\mathrm{S}}^{\,\sigma, \,g_R}_{i+j-1}(\mathrm{v})}{
(i+j-1)!}\right]_{\,i,\,j\,=\,1}^{n}\,.
\end{eqnarray}
\end{Def}

The following is used in the first part of the proof of Theorem \ref{TH-EXTEND-TUBE-FINITE}, in computations and in Section \ref{SECT-RANK}.

\begin{Prop}\label{HAT-SIGMA-HOMOGENEOUS}
The functions $ {\mathrm{S}}^{\,\sigma,\,g_R}_n $ and \,$\mathrm{D}^{\,\sigma, \,g_R}_n$ are
homogeneous   along the fibers of $TM$ of
degree $n-1$ and  $\,n(n-1)$ respectively.
They are all real analytic if $g$ is.
\end{Prop}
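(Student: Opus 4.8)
The plan is to reduce everything to the behavior of the building blocks under the fiber scaling $\mathrm{v}\mapsto\lambda\mathrm{v}$, $\lambda>0$, and then to track degrees through the defining formulas. First I would record the scaling of the three ingredients entering Definition \ref{DEF-S-TM}: the pulled-back curvature $\sigma$ is constant along each fiber, hence homogeneous of degree $0$; the norm square $\|\mathrm{v}\|^2=g(\mathrm{v},\mathrm{v})$ is homogeneous of degree $2$; and $\nabla^k_\mathrm{v}\sigma$, which by (\ref{Theta-Sigma}) is the $k$-th derivative at $t=0$ of $t\mapsto\sigma(\gamma(t))$ along the geodesic $\gamma$ with $\dot\gamma(0)=\mathrm{v}$ (equivalently, the symmetric covariant $k$-jet of $\sigma$ contracted with $k$ copies of $\mathrm{v}$), is a homogeneous polynomial of degree $k$ in $\mathrm{v}$; the scaling factor $\lambda^k$ follows from $\gamma_{\lambda\mathrm{v}}(t)=\gamma_\mathrm{v}(\lambda t)$ and the chain rule. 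Consequently the substitution $\sigma^{(k)}\mapsto\|\mathrm{v}\|^2\nabla^k_\mathrm{v}\sigma$ of (\ref{SUBSTITUTE}) sends a factor of weight $2+k$, in the sense of Definition \ref{t-degree}, to a quantity homogeneous of fiber-degree exactly $2+k$.

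Next I would invoke Proposition \ref{Schwarzians-in-derivatives}: since $\pi_n$ is isobaric of weight $n-1$, each monomial $\prod_i(\sigma^{(r_i)})^{l_i}$ in it satisfies $\sum_i(2+r_i)l_i=n-1$, and under the substitution it becomes homogeneous of fiber-degree $\sum_i(2+r_i)l_i=n-1$; hence $\mathrm{S}^{\,\sigma}_n$ is homogeneous of degree $n-1$. For the $g_R$-Schwarzian I would use Definition \ref{DEF-S-R-in-TM}: the $k$-th summand $g_{R,n,k}\|\mathrm{v}\|^{n-k}\mathrm{S}^{\,\sigma}_k(\mathrm{v})$ in (\ref{Lin-Comb-TM}) has degree $(n-k)+(k-1)=n-1$, independent of $k$, so $\mathrm{S}^{\,\sigma,\,g_R}_n$ is homogeneous of degree $n-1$ as well. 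For $\mathrm{D}^{\,\sigma,\,g_R}_n$ the entry in position $(i,j)$ of the Hankel matrix in (\ref{DNMR}) is homogeneous of degree $(i+j-1)-1=i+j-2$; expanding the determinant over the permutation group, the term indexed by a permutation $\delta$ has degree $\sum_{i=1}^n(i+\delta(i)-2)=2\sum_{i=1}^n i-2n=n(n+1)-2n=n(n-1)$, using $\sum_i\delta(i)=\sum_i i$. Since every term carries the same degree, $\mathrm{D}^{\,\sigma,\,g_R}_n$ is homogeneous of degree $n(n-1)$.

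For real analyticity I would argue that if $g$ is real analytic then $\sigma$, its covariant derivatives, and the coefficients of $\nabla^k_\mathrm{v}\sigma$ are real analytic on $M$, while $\|\mathrm{v}\|^2$ and each $\nabla^k_\mathrm{v}\sigma$ are polynomial along the fibers; as $\pi_n$ is a polynomial, $\mathrm{S}^{\,\sigma}_n$ is a polynomial in $\mathrm{v}$ with real analytic coefficients and is therefore real analytic on all of $TM$. The one point requiring care, and the only genuine obstacle, is the factor $\|\mathrm{v}\|^{n-k}$ in (\ref{Lin-Comb-TM}): when $n-k$ is odd it equals $(\|\mathrm{v}\|^2)^{(n-k-1)/2}\sqrt{g(\mathrm{v},\mathrm{v})}$, and $\sqrt{g(\mathrm{v},\mathrm{v})}$ is real analytic precisely on the complement $TM\setminus M$ of the zero section, where $g(\mathrm{v},\mathrm{v})>0$. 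Such terms genuinely occur, since for the preferred $g_R$ one has $g_{R,n,1}\neq 0$ and hence a $\|\mathrm{v}\|^{n-1}$ contribution for even $n$; I would therefore state the analyticity of $\mathrm{S}^{\,\sigma,\,g_R}_n$ and $\mathrm{D}^{\,\sigma,\,g_R}_n$ on $TM\setminus M$, noting that on the unit bundle $UM$, where all the inequalities are tested, this is exactly the regularity required, and that the limiting $R=\infty$ functions $\mathrm{S}^{\,\sigma}_n$, being polynomial in $\mathrm{v}$, are real analytic on all of $TM$.
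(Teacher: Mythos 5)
Your homogeneity argument is correct and is essentially the paper's own: both proofs run the isobaric weight of $\pi_n$ from Proposition \ref{Schwarzians-in-derivatives} through the substitution $\sigma^{(k)}\mapsto\|\mathrm{v}\|^2\nabla^k_\mathrm{v}\sigma$, observe that each summand $g_{R,n,k}\,\|\mathrm{v}\|^{n-k}\,\mathrm{S}^{\,\sigma}_k(\mathrm{v})$ in (\ref{Lin-Comb-TM}) has degree $(n-k)+(k-1)=n-1$, and then expand the determinant (\ref{DNMR}) over permutations, each term carrying the common degree $\sum_i\bigl(i+\delta(i)-2\bigr)=n(n-1)$ since $\sum_i\delta(i)=\sum_i i$; your justification of the degree-$k$ homogeneity of $\nabla^k_\mathrm{v}\sigma$ via $\gamma_{\lambda\mathrm{v}}(t)=\gamma_\mathrm{v}(\lambda t)$ is an equivalent substitute for the paper's appeal to multilinearity of $\nabla$. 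Where you genuinely depart from the paper is on real analyticity, and there your treatment is the more careful one. The paper's proof disposes of analyticity in one line that only covers $\pi_n(\|\mathrm{v}\|^2\sigma,\ldots,\|\mathrm{v}\|^2\nabla^{n-3}_\mathrm{v}\sigma)$, i.e.\ the $R=\infty$ functions $\mathrm{S}^{\,\sigma}_n$; it never addresses the factors $\|\mathrm{v}\|^{n-k}$ in (\ref{Lin-Comb-TM}). You correctly observe that for finite $R$ these factors with $n-k$ odd do occur (indeed $\mathrm{S}^{\,\sigma,\,g_R}_2(\mathrm{v})=\frac{\pi}{R}\|\mathrm{v}\|$, consistent with Remark \ref{onM}), so the blanket claim of analyticity on all of $TM$ fails at the zero section, while analyticity on $TM\setminus M$ holds and is exactly what the applications use: Theorem \ref{TH-EXTEND-TUBE-FINITE} tests the inequalities only on sets projecting to $UM$, and homogeneity transports them along rays off $M$. (One might add that some of the determinants, e.g.\ $\mathrm{D}^{\,\sigma,\,g_R}_2$, turn out to be polynomial in $\|\mathrm{v}\|^2$ and hence extend analytically across $M$ after all, but nothing in the paper requires this.) Also note that, as you implicitly acknowledge by taking $\lambda>0$, the finite-$R$ functions are only positively homogeneous, since $\|\lambda\mathrm{v}\|^{n-k}=|\lambda|^{n-k}\|\mathrm{v}\|^{n-k}$; this again matters only through odd powers of $\|\mathrm{v}\|$ and is harmless for the paper's use of the result.
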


\begin{proof}
Given the integer $k\,\geq\,0$, by the linearity of $\nabla$ in its lower argument, the function
$\|\mathrm{v}\|^2 \;\nabla_\mathrm{v}^k \sigma$ is homogeneous
of degree
$2+k$ along the fibers of $TM$; thus
$$
\textrm{degree of } \prod_{i=1}^n (\|\mathrm{v}\|^2\;\nabla_\mathrm{v}^{k_i}\sigma)^{l_i}\,=\,
\sum_{i=1}^n (2+k_i)\,l_i \,=\,\operatorname{w}
\left\{\prod_{i=1}^n {\left(\sigma^{(k_i)}\right)}^{l_i}\right\}
,$$
where for the second equality we have used
 $2+k\,=\,\operatorname{w}\{\sigma^{(k)}\}$
 by Definition \ref{t-degree}.

  Pair this with  (\ref{DEF-HOS-TM-PIN})
  and the fact the polynomials $\pi_n$ are  isobaric
  with  $\operatorname{w}\{\pi_n\}\,=\,n-1$ by Proposition
  \ref{Schwarzians-in-derivatives}, and
we have that for $ \lambda\neq \,0$,
$${\mathrm{S}}^{\,\sigma}_n(\lambda\,\mathrm{v})\;=\;\lambda^{n-1}
\,{\mathrm{S}}^{\,\sigma}_n(\mathrm{v})\,,$$
which  used in (\ref{Lin-Comb-TM}) shows the homogeneity property claimed,
\begin{equation}\label{TD=HD}{\mathrm{S}}^{\,\sigma,\,g_R}_{n}(\lambda\,\mathrm{v})\,=\,\lambda^{n-1}\, {\mathrm{S}}^{\,\sigma,\,g_R}_{n}(\mathrm{v})\,.
\end{equation}
In light of this, it follows that for every non-negative integer $n\,\geq \,2$ and any $\mathrm{v}\,\neq \,0$
\begin{equation}\label{DET-RESC}
\det\left[\frac{ {\mathrm{S}}^{\,\sigma,\,g_R}_{(i+j-1)}
( \lambda\,\mathrm{v})}{(i+j-1)!}
\right]_{\,i,\,j\,=\,1}^{n}\;
=\;\lambda^{N} \,
\det\left[
 \frac{ {\mathrm{S}}^{\,\sigma,\,g_R}_{(i+j-1)}( \mathrm{v} )}{(i+j-1)!}
      \right]_{\,i,\,j\,=\,1}^{n},
\end{equation}
where $N\,=\,n(n-1)$; indeed, for a given $n$,
the determinant that gives  $\mathrm{D}^{\,\sigma, \,g_R}_{n}$ consists of a sum of terms
of the form $\displaystyle \prod_{i\,=\,1}^n {\mathrm{S}}^{\,\sigma,\,g_R}_{(i+ \pi(\,i\,)-1)}(\mathrm{v})$,
for some permutation $\operatorname{p}$ of $\{\,1,\,\ldots,\,n\,\}$, and
hence, by (\ref{TD=HD}),  of common degree $$N\,=\,\sum_{i\,=\,1}^n\, i
+\operatorname{p}(\,i\,)-2\,=\,\sum_{i\,=\,1}^n\,2\,(i-1)\,= \,2\,\sum_{k\,=\,1}^{n-1}\,k\,=\,2\,\binom{n}{2}\,=\,n(n-1)\,,$$
the homogeneity claimed for $\mathrm{D}^{\,\sigma, \,g_R}_{n}$.

Finally,
 if $g$ is real analytic so is $\sigma$ and its covariant derivatives $\nabla_\mathrm{v}^{k} {\sigma}$ as a function of $\mathrm{v}$, and hence so is $\pi_n\,(\,\|\mathrm{v}\|^2 \,\sigma, \,\ldots , \,\|\mathrm{v}\|^2\,\,\nabla_\mathrm{v}^{n-3} {\sigma}\,)$.
\end{proof}

\section{ Theorem \ref{TH-EXTEND-TUBE-FINITE}. Gauss curvature and the adapted structure on $T^RM$
}\label{SECTION-GT}
We show that the functions introduced in the previous sections give a
characterization of the existence  of the adapted structure in $T^RM$ for a given real analytic and complete Riemannian metric $g$ on $M$.

 For $M$ compact, $T^RM$ with the adapted complex structure is a model for a Grauert tube $X^R$, which by definition is a
complex manifold of dimension $n$ with
an exhaustion function $u\colon X^R\to \mathbb{R}$, $u\,\geq\,0$,  $\sup { u} \,=\,R$,
satisfying,
in local holomorphic coordinates $z_1, \,\ldots\,,z_n$\,:
\begin{itemize}
\item[(i)] $\det \left[{\partial^{\,2} u^2}/{\partial z_i\, \partial \bar{z_j}}\right]\,>\,0$ and \item[(ii)] $\det \left[{\partial^{\,2}{u}}/{\partial z_i \partial \bar{z_j}}\right]\,=\,0\,,$
 valid off  the closed real analytic manifold $M\,:=\,{u}^{-1}(0)$, the fixed-point set of a anti-holomorphic involution of $X^R$.
\end{itemize}
   By (ii) there is a K\"{a}hler metric in $X^R$ with potential ${u}^2$
   which by restriction gives  $M$ a Riemannian metric $g$;
   $(M,\,g)$ is called \emph{the center} of the Grauert tube.

Besides the model for  $X^R$ in the tangent bundle
by Lempert and Sz\H{o}ke where the geometry of the center
$M$ is emphasized, there is the
    one in the cotangent bundle due
 to V. Guillemin and M. Stenzel \cite{GLLST}) where preeminence is given to the Monge-Amp\`{e}re function.

It is on that model of Lempert and Sz\H{o}ke where we base our constructions. There
 $X_R$ is represented as the real manifold
\begin{equation}\label{DEF-TRM}
T^RM=\{\mathrm{v}\in TM\,|\,g(\mathrm{v},\,\mathrm{v})\,<\,R^2\}
\end{equation}
 endowed with the  \emph{adapted complex structure}
 induced by $g$ \cite{LESZ}. Under this
 identification, for all $\mathrm{v}\in TM$,
 $u^2(\mathrm{v})\,=\,g(\mathrm{v},\mathrm{v})\,
 =\,\|\mathrm{v}\|^2\,=\,2\,E$, while the
 map $\mathrm{v}\mapsto -\mathrm{v}$ is the antihololomophic
 involution. The metric $g$ must be real analytic,
 by a result of Lempert  \cite{LEMPREG}.

As long as  $g$ is complete and real analytic,
the adapted complex structure is always defined in some neighborhood
$\mathcal {U}$ of $M$ in $TM$.
However, when $M$ is not compact such $R>0$ might not exist,
for example if along a geodesic the Gauss curvature is not bounded below thus violating Lempert-Sz\H{o}ke inequality for any $R>0$. If $M$ is compact such radius of course exist, but its value is severely limited by the geometry of $M$.

\begin{thm} \label{TH-EXTEND-TUBE-FINITE}  Consider a  complete  real analytic Riemannian metric $g$ on the two-dimensional $M$.
Let $R\in (0, \, \infty]$ be given and  $\mathcal{V}\subset \,TM\,\setminus{M}$
 any set that maps onto the unit tangent bundle $UM$
by the assignment $\mathrm{v}\,\mapsto \|\mathrm{v}\|^{-1}\,\mathrm{v}$.
Then the adapted complex structure exists up to radius $R$ if and only if for all integer $n\,\geq\,1$ and
 for all $\mathrm{v}\in \,\mathcal{V}$,
\begin{equation}\label{sequence}
\mathrm{D}^{\,\sigma,\,g_R}_{n}(\mathrm{v})\,\geq \,0\,.
 \end{equation}
\end{thm}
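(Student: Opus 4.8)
The plan is to reduce the global geometric statement about $T^RM$ to the one-variable criterion of Theorem \ref{TH-EXTEND-FINITE-R}, applied separately along each geodesic, and then to transport that criterion into the tangent bundle by means of the correspondence (\ref{INTRO-S-R-TM-vs}) and the fiberwise homogeneity in Proposition \ref{HAT-SIGMA-HOMOGENEOUS}. First I would recall the Lempert--Sz\H{o}ke description of the adapted complex structure \cite{LESZ}: its leaves are the holomorphic strips obtained by complexifying unit-speed geodesics, and the structure is defined on $T^RM$ precisely when, for every $\mathrm{v}\in UM$, the quotient $h_\mathrm{v}$ of two independent Jacobi fields along the geodesic $t\mapsto\pi(\phi_t\mathrm{v})$ admits a meromorphic continuation to the $R$-strip that is holomorphic with the correct sign of imaginary part on $H^R_+$. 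Since $\mathrm{v}\in UM$ makes that geodesic unit-speed, the governing Jacobi equation is exactly $y''+\vartheta\,y=0$ with $\vartheta(t)=\sigma(\phi_t\mathrm{v})$ as in (\ref{Theta-Sigma}); hence the existence of the adapted structure on $T^RM$ is equivalent to the membership $\vartheta\in\mathfrak{L}_R$ in the sense of Theorem \ref{TH-EXTEND-FINITE-R} \emph{for every} $\mathrm{v}\in UM$. This identification is the one genuinely geometric step, and it is the main obstacle: it has to be extracted from the Lempert--Sz\H{o}ke (equivalently Guillemin--Stenzel \cite{GLLST}) theory, with completeness and real analyticity of $g$ ensuring that the complexified geodesics and their Jacobi data behave as required, and here I would cite \cite{LESZ} and \cite{SZ-THESIS}.

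Granting that identification, the rest is bookkeeping. For each fixed $\mathrm{v}\in UM$ set $\vartheta=\vartheta_\mathrm{v}$, which is real analytic because $g$ is. Applying Theorem \ref{TH-EXTEND-FINITE-R} to $\vartheta_\mathrm{v}$ then shows that $\vartheta_\mathrm{v}\in\mathfrak{L}_R$ is equivalent to the determinantal inequalities $\mathcal{D}^{\,\vartheta_\mathrm{v},\,g_R}_n\ge 0$: Part (1) gives these at every point from the analytic continuation, while Part (2) shows, conversely, that their validity at the single base point $t=0$ already forces the continuation (see also Remark \ref{D-at-x0}). By the definition of the Schwarzians in $TM$ (Definitions \ref{DEF-S-TM} and \ref{DEF-S-R-in-TM}), the substitution $\sigma^{(k)}\mapsto\|\mathrm{v}\|^2\nabla^k_\mathrm{v}\sigma$ evaluated at a point of $UM$ reads $\nabla^k_\mathrm{v}\sigma=\vartheta_\mathrm{v}^{(k)}(0)$, which is precisely the content of (\ref{INTRO-S-R-TM-vs}) at $t=0$; consequently $\mathrm{S}^{\,\sigma,\,g_R}_n(\mathrm{v})=\mathcal{S}^{\,\vartheta_\mathrm{v},\,g_R}_n(0)$ and therefore $\mathrm{D}^{\,\sigma,\,g_R}_n(\mathrm{v})=\mathcal{D}^{\,\vartheta_\mathrm{v},\,g_R}_n(0)$ for all $\mathrm{v}\in UM$. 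Combining this with the previous paragraph, the adapted structure exists on $T^RM$ if and only if $\mathrm{D}^{\,\sigma,\,g_R}_n(\mathrm{v})\ge 0$ for all integers $n\ge 1$ and all $\mathrm{v}\in UM$.

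Finally I would pass from $UM$ to the given set $\mathcal{V}$ using Proposition \ref{HAT-SIGMA-HOMOGENEOUS}. Since $\mathrm{D}^{\,\sigma,\,g_R}_n$ is homogeneous of degree $n(n-1)\ge 0$ along each fiber of $TM$, for $\mathrm{v}\ne 0$ the sign of $\mathrm{D}^{\,\sigma,\,g_R}_n(\mathrm{v})$ coincides with that of $\mathrm{D}^{\,\sigma,\,g_R}_n(\|\mathrm{v}\|^{-1}\mathrm{v})$, and $\|\mathrm{v}\|^{-1}\mathrm{v}\in UM$. Because $\mathcal{V}$ maps onto $UM$ under normalization, the inequalities (\ref{sequence}) holding on $\mathcal{V}$ are equivalent to their holding on all of $UM$, which by the above is equivalent to existence of the adapted structure on $T^RM$. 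The only point that needs a word of care beyond the geometric equivalence is the case $R=\infty$, where continuation to the $R$-strip means continuation to all of $\mathbb{C}$ and $g_R$ degenerates to the identity (Remark \ref{REM-SF-LIMITS}); but Theorem \ref{TH-EXTEND-FINITE-R} already treats finite and infinite $R$ uniformly, so no separate argument is required.
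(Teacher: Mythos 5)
Your proposal is correct and takes essentially the same route as the paper's own proof: reduction to $UM$ by the fiberwise homogeneity of Proposition \ref{HAT-SIGMA-HOMOGENEOUS}, the Lempert--Sz\H{o}ke characterization of the adapted structure on $T^RM$ as a meromorphic Pick-type extension of the quotient of normal Jacobi fields along each unit-speed geodesic, and the application of Theorem \ref{TH-EXTEND-FINITE-R} via the correspondence $\vartheta \leftrightarrow \sigma\circ\gamma$, $\mathcal{S}^{\,\vartheta,\,g_R}_k \leftrightarrow \mathrm{S}^{\,\sigma,\,g_R}_k\circ\dot{\gamma}$. The only differences are cosmetic: the paper normalizes $\mathcal{V}$ to $UM$ at the outset rather than at the end, and your explicit one-point appeal to Part (2) together with Remark \ref{D-at-x0} is something the paper defers to Corollary \ref{COR-REAL-ANALYTIC}.
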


\begin{proof} Since
by Proposition \ref{HAT-SIGMA-HOMOGENEOUS} the functions ${\mathrm{D}}^{\,\sigma, \,g_R}_n $
are homogeneous   along the fibers of $TM$, their signs are constant along each ray $\{ e^t\,\mathrm{v}\,, t\in \mathbb{R}\}\subset TM\setminus M$ and so, without loss of generality we take $\mathcal{V}\,=\,UM$, the unit tangent bundle.

 Consequently, we consider unit
 speed geodesics in our recollection of the relevant parts of the construction of the adapted complex structure \cite{LESZ}.

For a given unit-speed geodesic $\gamma$ in $M$
 consider two Jacobi fields along $\gamma\colon\mathbb{R}\to M$, $J^{\,\gamma}_1$ and $J^{\,\gamma}_2$,  linearly independent and point-wise orthogonal to $\dot{\gamma}$, and define $h^{\,\gamma}\colon \mathbb{R}\setminus\mathbb{S^{\,\gamma}}\to \mathbb{R}$ by
$$J^{\,\gamma}_2(t)\,=\,h^{\,\gamma}(t)\,J^{\,\gamma}_1(t),$$
with $t$ arc-length and
$\mathbb{S^{\,\gamma}}\,:=\,\{\,t\,\in \,\mathbb{R}\;|\;J^{\,\gamma}_1(t)\,=\,0\,\} \subset \mathbb{R}$.

Consider the vector fields
$\tilde{J}^{\,\gamma}_i$, $i=1,2$
along the map
$\mathbf{P}_\gamma\colon \mathbb{C}\to TM$, where
\begin{equation}\label{MA-LEAF-MAP}
\mathbf{P}_\gamma (\,
x+\sqrt{-1}\,y\,) \,=\, y\,\dot{\gamma}(x)\,, x, y \in \mathbb{R}\,,
\end{equation}
given  at  $w\,=\,x+\sqrt{-1}\,y$,
by
\begin{equation}\label{J-LIFT}
\tilde{J}^{\,\gamma}_i|_w \,=\,(J^{\,\gamma}_i(x))^{\,h}_{ \mathbf{P}_\gamma (w)} +
 (\nabla_{ \mathbf{P}_\gamma (w)} J^{\,\gamma}_i)^{\,v}_{ \mathbf{P}_\gamma (w)},
\end{equation}
where, we recall,  the \emph{horizontal lift} $(u)^{\,h}_\mathrm{v}$ and the
\emph{vertical lift} $(u)^{\,v}_\mathrm{v}$ lift
of a vector $u \in T_{\pi \mathrm{v}} M$ are the vectors in  $T_\mathrm{v}(TM)$  defined by $$\pi_* (u)^{\,h}_\mathrm{v}\,=\,K(u)^{\,v}_\mathrm{v}\,=\,u\,,\;\;\pi_*(u)^{\,v}_\mathrm{v}\,=\,K (u)^{\,h}_\mathrm{v}\,=\,0\,,$$ with $K\colon T(TM)\to TM$ the connection map \cite{Klingenberg}, \cite{LESZ}.

Then, there is $\,R\,>\,0$ such  the adapted complex structure $\mathbf{J}$ exists on
(\ref{DEF-TRM})
if and only if for every unit-speed geodesic,
there is a meromorphic extension of $h^{\gamma}$, $$h^{\gamma}_\mathbb{C}\colon \{ \,x +\sqrt{-1}\,y\,;\; x, y \in \mathbb{R}, \,|y|\,<\,R\,\}\,\subset\,\mathbb{C}\to \mathbb{C}\cup \{\infty\}$$ with poles only on
$\mathbb{S^{\,\gamma}}\,:=\,\{\,t\,\in \,\mathbb{R}\;|\;J^{\,\gamma}_1(t)\,=\,0\,\} \subset \mathbb{R}$ and with positive imaginary part for $y\,>\,0$, such that
$$
\tilde{J}^{\,\gamma}_2-\sqrt{-1}\,\mathbf{J}\,\tilde{J}^{\,\gamma}_2 =
h^{\,\gamma}_\mathbb{C}(x+\sqrt{-1} y)
\;\left(
\tilde{J}^{\,\gamma}_1-\sqrt{-1}\,
\mathbf{J}\,\tilde{J}^{\,\gamma}_1\right)\,.
$$

 On the other hand, along every unit-speed geodesic $\gamma\colon \mathbb{R}\to M$
the map $$t\mapsto f(t)\, \star\dot{\gamma}(t),$$
 where $t\mapsto \{\,\dot{\gamma}(t),\; \star\dot{\gamma}(t)\,\}$ is an orthonormal frame along $\gamma$,  is a Jacobi field if and only if   \begin{equation}\label{DE-GAMMA}
f^{\prime\prime}(t)+\sigma(\gamma(t))\,f(t)\,=\,0\,;
\end{equation}
hence
$h^\gamma\,=\,{f^{\,\gamma}_2}/{f^{\,\gamma}_1}$
where ${f^{\,\gamma}_1}$ and $f^{\,\gamma}_2$ are two independent solutions of (\ref{DE-GAMMA}).

Now apply Theorem \ref{TH-EXTEND-FINITE-R} to every unit-speed $\gamma$
using the relation (\ref{INTRO-S-R-TM}), that establishes the
correspondence (\ref{Theta-Sigma}), which since we identify $\sigma$
with $\pi^*\sigma$,
becomes
$$
 \fbox{$\vartheta\,\leftrightarrow\,\sigma\circ{\gamma}$}\,.$$
 Thus
 for all integer $k\,\geq \,1$ we have the correspondences
 $$\fbox{$\displaystyle
    \vartheta^{(k)}\,\leftrightarrow\,\nabla^k_{\dot{\gamma}}\sigma
    $}
 $$
and
 $$\fbox{$\displaystyle
    \mathcal{S}^{\,\vartheta, \,g_R}_k\,
    \leftrightarrow\,\mathrm{S}_k^{\,\sigma, \,g_R}\circ{\dot{\gamma}}
    $}
 \,.$$
\end{proof}
In some cases it suffices to check the inequalities in a smaller
$\mathcal{V}\subset TM$. For instance we have the following.

\begin{Cor}\label{COR-REAL-ANALYTIC}
 Let the Riemannian metric $g$ on $M$ be complete and  real
 analytic, $G$ a subgroup of the isometry group and
 $\mathcal{Z}\,\subset \,TM\setminus{M}$
  any subset of $TM$ whose orbit corresponding
      to the $G$-action induced by differentials meets every trajectory of the geodesic flow.
The  adapted complex structure exists up to radius $R$ if and only if the inequalities (\ref{sequence}) hold
 for all integer $n\,\geq\,1$ and for all $\mathrm{v}\in \mathcal{Z}$.
 \end{Cor}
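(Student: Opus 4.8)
The plan is to deduce this from Theorem \ref{TH-EXTEND-TUBE-FINITE}, whose criterion demands the inequalities $\mathrm{D}^{\,\sigma,\,g_R}_n\geq 0$ on the whole unit tangent bundle $UM$. It therefore suffices to show that, when $g$ is real analytic, nonnegativity of all the $\mathrm{D}^{\,\sigma,\,g_R}_n$ at the points of $\mathcal{Z}$ already forces nonnegativity at every $w\in UM$; the reverse implication is immediate, since $\mathcal{Z}\subset TM\setminus M$ maps onto $UM$ by normalization and each $\mathrm{D}^{\,\sigma,\,g_R}_n$ is homogeneous along the fibers by Proposition \ref{HAT-SIGMA-HOMOGENEOUS}, so its sign is unchanged under $\mathrm{v}\mapsto \|\mathrm{v}\|^{-1}\mathrm{v}$. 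Two invariance properties of the family $\{\mathrm{D}^{\,\sigma,\,g_R}_n\}$ drive the argument: invariance under the differentials of isometries in $G$, and propagation along each trajectory of the geodesic flow.

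For the first property, I would note that every isometry $\Phi$ of $(M,g)$ preserves the metric, the Levi-Civita connection $\nabla$, and hence the Gauss curvature, so that $\sigma\circ\Phi=\sigma$ and $\nabla^k_{\,d\Phi(\mathrm{v})}\sigma=\nabla^k_{\mathrm{v}}\sigma$, while $\|d\Phi(\mathrm{v})\|=\|\mathrm{v}\|$. Feeding these equalities into the defining formula (\ref{DEF-HOS-TM-PIN}) gives $\mathrm{S}^{\,\sigma}_n(d\Phi(\mathrm{v}))=\mathrm{S}^{\,\sigma}_n(\mathrm{v})$, and then (\ref{Lin-Comb-TM}) together with Definition \ref{DEF-DET-R-TM} yields $\mathrm{D}^{\,\sigma,\,g_R}_n(d\Phi(\mathrm{v}))=\mathrm{D}^{\,\sigma,\,g_R}_n(\mathrm{v})$. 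In particular, for each $\Phi\in G$, nonnegativity at $\mathrm{v}$ transfers to nonnegativity at $d\Phi(\mathrm{v})$, so the inequalities hold on the entire $G$-orbit of $\mathcal{Z}$.

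For the second property, I would fix a unit-speed geodesic $\gamma$ with $\dot\gamma(0)=w$ and set $\vartheta(t)=\sigma(\gamma(t))$, which is real analytic because $g$ is. By the correspondence (\ref{INTRO-S-R-TM-vs}), together with Definitions \ref{DEF-DET-R} and \ref{DEF-DET-R-TM}, the restriction of $\mathrm{D}^{\,\sigma,\,g_R}_n$ to the trajectory $\{\phi_t w\}$ is exactly $t\mapsto\mathcal{D}^{\,\vartheta,\,g_R}_n(t)$, so Remark \ref{D-at-x0} applies: nonnegativity of all these determinants at a single point of the trajectory forces it at every point. Combining the two steps, given any $w\in UM$, the covering hypothesis supplies $z\in\mathcal{Z}$ and $\Phi\in G$ with $d\Phi(z)$ lying on the geodesic-flow trajectory of $w$; the inequalities hold at $z$ by assumption, hence at $d\Phi(z)$ by isometry invariance, hence along the whole trajectory, and in particular at $w$, by the real-analytic propagation. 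Theorem \ref{TH-EXTEND-TUBE-FINITE} then completes the proof. The one point deserving care, and the step I expect to be the main obstacle, is confirming the naturality $\nabla^k_{\,d\Phi(\mathrm{v})}\sigma=\nabla^k_{\mathrm{v}}\sigma$ for the iterated directional covariant derivatives of the scalar $\sigma$ under the differential of an isometry, so that the $\mathrm{D}^{\,\sigma,\,g_R}_n$ are honestly constant on $G$-orbits; once this is in hand, the remaining bookkeeping is routine.
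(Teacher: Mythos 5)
Your proof is correct and follows essentially the same route as the paper's: propagation of the determinantal inequalities along each geodesic-flow trajectory via Remark \ref{D-at-x0} (that is, part (2) of Theorem \ref{TH-EXTEND-FINITE-R} plus real analyticity), combined with the fiber-homogeneity of Proposition \ref{HAT-SIGMA-HOMOGENEOUS} to allow $\mathcal{Z}$ to lie outside $UM$. The only difference is that you explicitly verify the $G$-invariance of the $\mathrm{D}^{\,\sigma,\,g_R}_n$ — your ``main obstacle'' $\nabla^k_{\,d\Phi(\mathrm{v})}\sigma=\nabla^k_{\mathrm{v}}\sigma$ is immediate, since $\Phi\circ\gamma_{\mathrm{v}}=\gamma_{d\Phi(\mathrm{v})}$ and $\sigma\circ\Phi=\sigma$ give the same function $\vartheta$ along both geodesics — a step the paper's terse proof leaves implicit.
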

 \begin{proof}
 By  part 2 of Theorem \ref{TH-EXTEND-FINITE-R} and
 since the orbits of the geodesic flow on $UM$
 correspond by projection to the geodesics of $M$,
 in light of Remark \ref{D-at-x0} applied to the context of  $TM$, the
real analyticity of $g$ implies that the conditions  (\ref{sequence})
need to be checked at just one point along
each orbit of the geodesic flow. As noted earlier
due to the
re-scaling property of the polynomials $\pi_n$ shown
in Proposition \ref{HAT-SIGMA-HOMOGENEOUS}
the
inequalities in Theorem \ref{TH-EXTEND-TUBE-FINITE}
hold along lines through the origin
in the fibers of $TM$, and thus $\mathcal{Z}$ does
not have to be contained in the unit tangent bundle   $UM$.
 \end{proof}
 \begin{Rem} As an example, consider  a real analytic
 metric  of revolution
  two-sphere $M\,=\,S^2$ isometrically embedded
  in $\mathbb{R}^3$ and with positive Gauss curvature. Then Corollary
 \ref{COR-REAL-ANALYTIC} with
 $G=S^1$ applies, with $\mathcal{Z}\,=\, U_{x}M$ for $x$ any point in the equator of $M$.
\end{Rem}

\begin{Rem}\label{CURV-INTEGR}
On $TM\setminus M$, there is a canonical  frame field
$\displaystyle z\mapsto\{X_1, X_2, V_3, V_4\}$ defined by the Levi-Civita connection and the endomorphism $S\colon TM\to TM$ given by a positive rotation of  $\pi/2$ with respect to $g$.
\begin{itemize}
\item[(1)]
  $X_1$ is the geodesic spray, the geodesic flow given by
  $\displaystyle (\mathrm{v},\,t)\mapsto \phi_{t} (\mathrm{v})\,:=\,\dot{\gamma}_\mathrm{v}(t)$
  with $\gamma_\mathrm{v}$  the constant speed geodesic with $\dot{\gamma}_\mathrm{v}(0)=\mathrm{v}$;
\item[(2)]
$V_1$ is the radial vector field,
$\displaystyle (\mathrm{v},\,t)\,\mapsto\,e^t\,\mathrm{v}$;
\item[(3)]
 $X_2$  has as flow
 $\displaystyle (\mathrm{v},t)\,\mapsto\, \mathrm{j}\,\phi_t\,{\mathrm{j}}^{-1}(\mathrm{v})
 =\mathrm{j}\,\dot{\gamma}_{{\mathrm{j}}^{-1}\mathrm{v}}(t)$,
 where  and ${\gamma}_{{\mathcal{\mathrm{j}}}^{-1}\mathrm{v}}$ is the unit speed geodesic with $\dot{\gamma}_{{\mathrm{j}}^{-1}\mathrm{v}}(0)=-\mathrm{j}\mathrm{v}$;
\item[(4)]
  $V_2$ is the angular field with flow
  $\displaystyle (\mathrm{v},\,t)\mapsto \,\cos(t)\,\mathrm{v}+\sin(t)\,\mathrm{j}\mathrm{v}.$
\end{itemize}
 In terms of the horizontal and vertical lifts  we have
\begin{eqnarray}\notag
 X_1 |_\mathrm{v} \,=\, (\mathrm{v})^{\,h}_\mathrm{v}, \quad
V_1|_\mathrm{v} \,=\, (\mathrm{v})^{\,v}_\mathrm{v}, \quad
X_2|_\mathrm{v} \,=\, (\mathrm{j}\mathrm{v})^{\,h}_\mathrm{v}, \quad
V_2 |_\mathrm{v} \,=\, (\mathrm{j}\mathrm{v})^{\,v}_\mathrm{v},
\end{eqnarray}
and
the relation with the notation as above
by the equation for normal Jacobi fields
$$(\phi_t)_*
\left[
\begin{array}{c}
X_{2}\\
V_{2}
\end{array}
\right]_\mathrm{v}
  \,=\,\left[\begin{array}{cc}
f_1&f_1^\prime\\
f_2&f_2^\prime
\end{array}\right]_t\;
\left[
\begin{array}{c}
X_{2}\\
V_{2}
\end{array}
\right]_{\phi_{t}\mathrm{v}},
$$
interpreted along  unit-speed geodesics $\gamma\colon \mathbb{R}\to M$
where the assignments $$s \mapsto J_i(t)\,=\, f_i(t) \,X_2|_{\phi_t \dot{\gamma}(0)}, i\,=\,1,\,2$$ define linearly independent Jacobi fields along $\gamma$. Now, using these Jacobi fields one proves that
\begin{equation}\label{bracket}
[X_1, X_2]_{\mathrm{v}}\,=\, \sigma \,\|\mathrm{v}\|^2 \; \;V_2,
\end{equation}
which of course defines  the Gauss curvature $\sigma$ as the obstruction for the integrability of the horizontal distribution determined by the connection and spanned by $X_1$ and $X_2$. The equation (\ref{bracket}) is the reason for the introduction of the factor $\|\mathrm{v}\|^2$ in (\ref{SUBSTITUTE}), so that the Schwarzians in $TM$ re-scale correctly along the fibers. It also indicates that the Schwarzians in $TM$ can be defined equivalently in terms to derivatives of $\sigma$ along the geodesic flow.

In fact the version in $TM$ of the formula
(\ref{HOS-REC-FORMULA})
is
\begin{equation}
 \mathrm{S}^{\,\sigma}_{n+1}(\mathrm{v})
 \,=\,\operatorname{d}\mathrm{S}^{\,\sigma}_{n}
 \left(X_1\right)(\mathrm{v})\;+\;\sigma\,
  \sum_{k=1}^{n-1}\,
  \binom{n}{k}\;{\mathrm{S}}^{\,\sigma}_{k}(\mathrm{v})\;
  {\mathrm{S}}^{\,\vartheta}_{n-k}(\mathrm{v})\;.
  \end{equation}
\end{Rem}

\section{On the rank of the infinite Schwarzian matrix}\label{SECT-RANK}
We include some comments on the rank of the infinite
Hankel matrix of $g_R$-Schwarzians and its relation to the adapted complex structure. A companion infinite Hankel matrix is introduced in Definition \ref{DEF-RANK-C} and applied to  closed geodesics.
\begin{Def}\label{DEF-RANK}
Put, for $\mathrm{v}\in TM$,
\begin{equation}\label{DEF-RANK-TM}
\mathrm{R}^{\,\sigma, \,g_R}(\mathrm{v})\,:=\,
\operatorname{Rank}\left[\frac{{\mathrm{S}}^{\,\sigma, \,g_R}_{i+j-1}
(\mathrm{v})}{(i+j-1)!}\right]_{\,i,\,j\,=\,1}^{\infty}\,,
\end{equation}
and consider
 the function $\mathrm{R}^{\,\sigma,\,g_R}\colon \,TM \,\to\,\mathbb{N}\,\cup\,\{\infty\}$ defined by letting $\mathrm{v}\in TM$ vary in (\ref{DEF-RANK-TM})\,.
\end{Def}
 We will show by means of  Proposition \ref{PROP-RANK-RAY}
 and Proposition \ref{PROP-RANK-GF} that this function is constant along the leaves
of the foliation of $TM\setminus{M}$,
$$TM\setminus{M}\,\;\;\;
=\,\bigcup_{\gamma \textrm{ geodesic of M}}
\mathbf{P}_\gamma \left(\mathrm{C}\setminus\mathbb{R}\right)\,,$$
determined by the images of the maps (\ref{MA-LEAF-MAP}).

Note that for all
$\mathrm{v}\in M \subset TM$ we always have   $\mathrm{R}^{\,\sigma,\,g_R}(\mathrm{v})\,=\,1$,
by Remark \ref{onM}\,.

\begin{Prop}\label{PROP-RANK-RAY}
The function $\mathrm{R}^{\,\sigma,\,g_R}$
 is constant along
the lines in the fibers of $TM$, i.e., for all $\mathrm{v}\in TM$ and for all $t\in \mathbb{R}$
\begin{equation}\label{RANK-RAY}
\mathrm{R}^{\,\sigma, \,g_R}
(e^t\,\mathrm{v})\,=\,\mathrm{R}^{\,\sigma, \,g_R}
(\mathrm{v})\,.
\end{equation}
\end{Prop}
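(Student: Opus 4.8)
The plan is to reduce everything to the homogeneity already recorded in Proposition \ref{HAT-SIGMA-HOMOGENEOUS} together with the elementary fact that multiplying a matrix on either side by an invertible diagonal matrix leaves its rank unchanged. First I would recall from (\ref{TD=HD}) that for $\lambda\neq 0$ and every integer $n\geq 1$ one has ${\mathrm{S}}^{\,\sigma,\,g_R}_{n}(\lambda\,\mathrm{v})=\lambda^{n-1}\,{\mathrm{S}}^{\,\sigma,\,g_R}_{n}(\mathrm{v})$. Putting $\lambda=e^t$ and $n=i+j-1$, the $(i,j)$ entry of the infinite Hankel matrix at $e^t\mathrm{v}$ becomes
$$\frac{{\mathrm{S}}^{\,\sigma,\,g_R}_{i+j-1}(e^t\mathrm{v})}{(i+j-1)!}=e^{t(i+j-2)}\,\frac{{\mathrm{S}}^{\,\sigma,\,g_R}_{i+j-1}(\mathrm{v})}{(i+j-1)!}=e^{t(i-1)}\,e^{t(j-1)}\,\frac{{\mathrm{S}}^{\,\sigma,\,g_R}_{i+j-1}(\mathrm{v})}{(i+j-1)!}.$$

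The key observation is that the exponent $i+j-2$ splits as $(i-1)+(j-1)$, so the scaling factorizes through the row index and the column index separately. Writing $D_t=\operatorname{diag}\left(e^{t(i-1)}\right)_{i\geq 1}$ for the diagonal matrix whose $i$-th diagonal entry is $e^{t(i-1)}$, and letting $H$ denote the infinite Hankel matrix at $\mathrm{v}$, the displayed identity says precisely that the Hankel matrix at $e^t\mathrm{v}$ equals $D_t\,H\,D_t$. Thus the whole effect of replacing $\mathrm{v}$ by $e^t\mathrm{v}$ is a two-sided congruence by an invertible diagonal matrix.

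Since the rank of an infinite matrix is the supremum of the ranks of its finite submatrices, I would argue at that finite level: for any finite sets of row indices $I$ and column indices $J$, the corresponding submatrix of $D_t\,H\,D_t$ equals $\Lambda_I\,H_{I,J}\,\Lambda_J$, where $\Lambda_I$ and $\Lambda_J$ are the diagonal submatrices of $D_t$ indexed by $I$ and $J$. Both are invertible, because the entries $e^{t(i-1)}$ never vanish, so $\operatorname{Rank}(\Lambda_I\,H_{I,J}\,\Lambda_J)=\operatorname{Rank}(H_{I,J})$; taking the supremum over all $I,J$ gives $\mathrm{R}^{\,\sigma,\,g_R}(e^t\mathrm{v})=\mathrm{R}^{\,\sigma,\,g_R}(\mathrm{v})$, which is (\ref{RANK-RAY}). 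Essentially no obstacle arises here beyond the bookkeeping; the only point worth a word of care is that the row- and column-scalings are carried by the \emph{same} invertible diagonal matrix, which is exactly what the factorization $i+j-2=(i-1)+(j-1)$ guarantees, and this is what makes the rank (rather than merely the individual determinants $\mathrm{D}^{\,\sigma,\,g_R}_{n}$, which only rescale by powers of $e^t$) an honest invariant of the ray.
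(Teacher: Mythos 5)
Your proposal is correct, but it takes a genuinely different route from the paper's own proof. The paper argues through the Hankel structure itself: it invokes Theorem 7 of \cite{GANTMACHER2} (Kronecker's characterization), by which the infinite Hankel matrix has finite rank $N$ exactly when the entries $\mathrm{C}_n(\mathrm{v})={\mathrm{S}}^{\,\sigma,\,g_R}_n(\mathrm{v})/n!$ satisfy the linear recurrence $\mathrm{C}_{N+n+1}(\mathrm{v})=\sum_{i=1}^N a_i(\mathrm{v})\,\mathrm{C}_{i+n}(\mathrm{v})$ for all $n\geq 0$ with $N$ minimal, and then uses the homogeneity (\ref{TD=HD}) to check that the same recurrence holds at $\lambda\mathrm{v}$ with the rescaled coefficients $a_i(\lambda\mathrm{v})=\lambda^{N-i+1}a_i(\mathrm{v})$. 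Your diagonal congruence $H(e^t\mathrm{v})=D_t\,H(\mathrm{v})\,D_t$, resting only on the factorization $i+j-2=(i-1)+(j-1)$, is more elementary and more general: it uses no Hankel-specific theorem, treats the finite- and infinite-rank cases uniformly rather than via the finite-rank characterization, and would apply verbatim to any infinite matrix whose $(i,j)$ entry is homogeneous of degree $i+j-2$ along the fiber. What the paper's heavier machinery buys is not this proposition but its sequels: the recurrence (\ref{EQ-RANK-SEQ-EQS}) and its coefficients $a_i$ are exactly what is reused in Proposition \ref{PROP-RANK-GF} and Corollary \ref{PROP-F-R-SR}, where the rational-function description of the generating series is extracted, so the paper is setting up apparatus it needs later. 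The one point you should make explicit is that the rank in Definition \ref{DEF-RANK} is the minor-based rank of \cite{GANTMACHER2} (the supremum of orders of nonvanishing finite minors), so that your reduction to finite submatrices is a matter of definition rather than an additional fact; granting that, your argument is complete.
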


\begin{proof}
Put, for all
 integer $n\geq 1$,
 $$\mathrm{C}_n (\mathrm{v})\,=\, \frac{{\mathrm{S}}^{\,\sigma, \,g_R}_{n}(\mathrm{v})}{n!}\,.$$

The result claimed is a consequence of  the homogeneity property
\begin{equation}\label{C-SCALE}
{\mathrm{C}}_n(\mathrm{\lambda\,v})\,=\,\lambda^{n-1}\,
{\mathrm{C}}_n(\mathrm{v})
\end{equation}
proved in Proposition \ref{HAT-SIGMA-HOMOGENEOUS}
and the special symmetry
of an infinite Hankel matrix .

In fact, by Theorem 7 in \cite{GANTMACHER2}, the infinite Hankel matrix
\begin{equation}\label{IHM}
\left[\mathrm{C}_{i+j-1}(\mathrm{v})\right]_{i,j\,=\,1}^{\infty}
\end{equation}
 has finite rank $N$ if and only if $N$ is the smallest natural number such that the $(1+N)^{\textrm{th}}$ infinite column of (\ref{IHM}) is a linear combination of the $1^{\textrm{st}}\,,\cdots N^{\textrm{th}}$  infinite columns of this infinite matrix, i.e.,  if and only   $N$ is the smallest natural number so that there are numbers  $a_1(\mathrm{v}), \ldots a_N(\mathrm{v})$ $\in \mathbb{C}$ such that
\begin{equation}\label{R0-H}
\left[\begin{array}{cccc}
\mathrm{C}_1(\mathrm{v})&C_2(\mathrm{v})&\cdots&\mathrm{C}_N(\mathrm{v})\\
\mathrm{C}_2(\mathrm{v})&\mathrm{C}_3(\mathrm{v})&\cdots&\mathrm{C}_{N+1}(\mathrm{v})\\
\vdots & \vdots & \;&\vdots
\end{array}\right]
\left[\begin{array}{c}a_1(\mathrm{v})\\\vdots\\a_N(\mathrm{v})
\end{array}\right]\,
\,=\,
\left[\begin{array}{c} {\mathrm{C}}_{N+1}(\mathrm{v})\\
                        {\mathrm{C}}_{N+2}(\mathrm{v})\\
                        \vdots
                        \end{array}\right]\,,
\end{equation}
which is equivalent to
the validity of the sequence of equalities
\begin{equation}\label{EQ-RANK-SEQ-EQS}
\mathrm{C}_{N+n+1}(\mathrm{v})=\sum_{i\,=\,1}^N a_i(\mathrm{v})\, \mathrm{C}_{i+n}(\mathrm{v})
\end{equation}
for all integer $n\geq 0$. But from (\ref{C-SCALE}), for any $\lambda\neq 0$, the equalities in the sequence (\ref{EQ-RANK-SEQ-EQS})  hold
for the given $a_1(\mathrm{v}),\,\cdots,\,a_N(\mathrm{v})$ if and only if the equalities in the sequence
$$\mathrm{C}_{N+n+1}(\mathrm{\lambda\,v})=\sum_{i\,=\,1}^N a_i(\lambda\mathrm{v})\, \mathrm{C}_{i+n}(\mathrm{v}),$$
where $n\geq 0$,
hold for the coefficients  $a_i(\lambda\mathrm{v})$  given for $i=1,\, \cdots,\, N$ by $$a_i(\lambda\mathrm{v})\,:=\,\lambda^{N-i+1}\,a_i(\mathrm{v})\,. $$
\end{proof}

We will use the following facts taken from \cite{GANTMACHER2}.
\begin{Prop}\label{PROP-RAT-RANK} Let be given an infinite
Hankel matrix
\begin{equation}\label{IHM-0}
\left[\mathrm{C}_{i+j-1}\right]_{i,j\,=\,1}^{\infty}\,.
\end{equation}
\begin{itemize}
\item[(i)] (Corollary of Theorem 7 in \cite{GANTMACHER2})
If the matrix (\ref{IHM-0}) has finite rank $N$, then
\begin{equation}
\det \left[\mathrm{C}_{i+j-1}\right]_{i,j\,=\,1}^{N}\neq 0\,.\end{equation}
\item[(ii)](Theorem 8 in \cite{GANTMACHER2})
 The matrix (\ref{IHM-0}) has finite rank $N$ if and only if
 $\sum_{n\,=\,1}^\infty \mathrm{C}_n\,u^{-n}$
 is
 rational in $u$ of degree $N$.
 \end{itemize}
 \end{Prop}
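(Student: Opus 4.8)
The plan is to treat both parts as consequences of the single structural fact that a finite-rank Hankel matrix forces its generating sequence to obey a linear recurrence, a circle of ideas going back to Kronecker. The entry point is the column characterization from Theorem~7 of \cite{GANTMACHER2}, already invoked in the proof of Proposition~\ref{PROP-RANK-RAY}: rank $N$ means that $N$ is the \emph{smallest} index for which the $(N+1)$st infinite column is a linear combination of the first $N$, say with coefficients $a_1,\dots,a_N$. The Hankel (shift) symmetry then upgrades this single column relation to the recurrence
\begin{equation}\label{recurrence-plan}
\mathrm{C}_{N+m}=\sum_{k=1}^{N}a_k\,\mathrm{C}_{k+m-1},\qquad m\geq 1,
\end{equation}
valid for every shift $m$, while minimality of $N$ says the first $N$ infinite columns are linearly independent.

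For Part (i) I would argue by contradiction. Suppose $\det[\mathrm{C}_{i+j-1}]_{i,j=1}^{N}=0$; then there are coefficients $b_1,\dots,b_N$, not all zero, with $d_j:=\sum_{i=1}^{N}b_i\,\mathrm{C}_{i+j-1}=0$ for $j=1,\dots,N$. The key step is that (\ref{recurrence-plan}) propagates these vanishings to all $j$: arguing by induction on $m$, writing $i+N+m-1=(i+m-1)+N$ and applying (\ref{recurrence-plan}) gives $d_{N+m}=\sum_{k=1}^{N}a_k\,d_{k+m-1}$, whose right-hand side vanishes because every index $k+m-1$ is at most $N+m-1$. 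Hence $d_j=0$ for all $j$, so the same combination of the infinite columns vanishes, contradicting their linear independence. Therefore the leading $N\times N$ minor is nonzero.

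For Part (ii) I would pass to the generating function $f(u)=\sum_{n\geq1}\mathrm{C}_n u^{-n}$ and set $q(u)=u^{N}-\sum_{k=1}^{N}a_k u^{k-1}$. A direct coefficient count shows that the coefficient of $u^{-m}$ in $q(u)f(u)$ equals $\mathrm{C}_{N+m}-\sum_{k}a_k\,\mathrm{C}_{k+m-1}$, so (\ref{recurrence-plan}) holds for every $m\geq1$ precisely when $q(u)f(u)$ has no negative powers of $u$, i.e.\ is a polynomial $p(u)$ with $\deg p<N$; equivalently $f=p/q$ is rational with denominator of degree $N$. Running this equivalence in both directions yields that rank $N$ implies $f$ rational, and that $f$ rational of degree $N$ implies the recurrence, hence rank at most $N$.

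The point I expect to require the most care is pinning down that the degree is \emph{exactly} $N$ rather than merely at most $N$: one must rule out a common factor of $p$ and $q$, which would simultaneously lower the denominator degree and the order of the recurrence. This is exactly where Part (i) earns its keep, since the nonvanishing of $\det[\mathrm{C}_{i+j-1}]_{i,j=1}^{N}$ certifies that no shorter recurrence holds, so that $q$ is the minimal-degree denominator and $p/q$ is already in lowest terms. I would therefore prove (i) first and feed it into (ii) to secure the exact-degree statement, matching the logical order in which \cite{GANTMACHER2} presents these as a corollary of Theorem~7 followed by Theorem~8.
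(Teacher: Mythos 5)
Your argument is correct, but there is nothing in the paper to compare it against line by line: the paper offers no proof of this proposition, importing both parts verbatim from Gantmacher \cite{GANTMACHER2} (part (i) as the corollary of Theorem 7, part (ii) as Theorem 8). What you have produced is a self-contained reconstruction of Kronecker's classical theorem on finite-rank Hankel matrices, and it is essentially the argument Gantmacher himself gives, so the two routes differ only in that the paper outsources the work while you internalize it. Your reconstruction has the virtue of dovetailing with the paper's own use of the material: the recurrence you label (\ref{recurrence-plan}) is exactly the sequence of equalities (\ref{EQ-RANK-SEQ-EQS}) that the proof of Proposition \ref{PROP-RANK-RAY} extracts from Theorem 7, and your part (i) is precisely the fact invoked without proof at the end of Corollary \ref{PROP-F-R-SR} to guarantee $\mathrm{D}_N(t)\neq 0$. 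Three small points of hygiene. First, no genuine ``upgrade'' by shift symmetry is needed: since the columns are infinite, the single relation $\mathrm{col}_{N+1}=\sum_{k=1}^{N}a_k\,\mathrm{col}_k$ read entrywise, with row index $i=m$, \emph{is} already the recurrence $\mathrm{C}_{N+m}=\sum_k a_k\,\mathrm{C}_{k+m-1}$ for every $m\geq 1$. Second, the linear independence of the first $N$ infinite columns, which your contradiction in (i) requires, deserves its one-line justification: a dependence $\sum_{i\leq r}b_i\,\mathrm{col}_i=0$ with $b_r\neq 0$ propagates under the shift $j\mapsto j+1$ to place every column in the span of the first $r-1$, forcing rank $\leq r-1<N$; equivalently, since the recurrence puts all columns in the span of the first $N$, rank $N$ forces those $N$ columns to be independent. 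Third, in (ii) you should pin down ``degree'': because $f(u)=\sum_{n\geq 1}\mathrm{C}_n u^{-n}$ vanishes at $u=\infty$, one automatically has $\deg p<\deg q$, so the degree of $f$ is $\deg q$; with that convention your use of (i) to rule out a common factor of $p$ and $q$ (a common factor would yield a shorter recurrence and hence rank $<N$) correctly secures the exact-degree statement in both directions.
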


\begin{Prop}\label{PROP-RANK-GF}
 The function $\mathrm{R}^{\,\sigma,\,g_R}$
 is constant along the trajectories of the geodesic flow on $TM$, that is, for all $x\in \mathbb{R}$,
\begin{equation}\label{RANK-GF}
\mathrm{R}^{\,\sigma, \,g_R}
(\phi_x\mathrm{v})\,=\,\mathrm{R}^{\,\sigma, \,g_R}
(\mathrm{v})\,.
\end{equation}
\end{Prop}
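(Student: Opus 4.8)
The plan is to reduce the statement to a one–dimensional assertion about the Schwarzians on $\mathbb{R}$ and then to exploit the fact that moving the base point along a geodesic acts on the relevant generating function by degree–one maps only. First I would invoke Proposition \ref{PROP-RANK-RAY} to assume $\mathrm{v}\in UM$: since the geodesic flow consists of constant–speed geodesics it preserves $\|\mathrm{v}\|$, and by the reparametrization $\phi_x\mathrm{v}=\|\mathrm{v}\|\,\phi_{\|\mathrm{v}\|x}(\mathrm{v}/\|\mathrm{v}\|)$ together with the ray–invariance already proved, it suffices to treat unit vectors and the unit–speed flow. For $\mathrm{v}\in UM$ the correspondence (\ref{INTRO-S-R-TM-vs}), with $\vartheta(t)=\sigma(\phi_t\mathrm{v})$, identifies the Hankel matrix at $\phi_x\mathrm{v}$ with $\bigl[\mathcal{S}^{\vartheta,g_R}_{i+j-1}(x)/(i+j-1)!\bigr]$ and the one at $\mathrm{v}$ with the same expression at $x=0$. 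So everything comes down to showing that, for a fixed real–analytic $\vartheta$, the rank of $\bigl[\mathcal{S}^{\vartheta,g_R}_{i+j-1}(x)/(i+j-1)!\bigr]_{i,j=1}^{\infty}$ is independent of $x$.

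For the one–dimensional problem I would pass to generating functions. By Proposition \ref{H-SCHWARZ} and $h_x(x)=0$, the convergent series
\[
\Psi_x(t):=\sum_{n\ge 1}\frac{\mathcal{S}^{\vartheta,g_R}_n(x)}{n!}\,t^{n}=h_x\bigl(x+g_R(t)\bigr)
\]
satisfies $\sum_{n}\tfrac{\mathcal{S}^{\vartheta,g_R}_n(x)}{n!}\,u^{-n}=\Psi_x(1/u)=:\Phi_x(u)$. Because $g_R(0)=0$ we have $\Phi_x(\infty)=h_x(x)=0$, so $\Phi_x$ is a proper series vanishing at infinity, and by Proposition \ref{PROP-RAT-RANK}(ii) the rank at $x$ equals the degree of $\Phi_x$ when the latter is rational, and is infinite exactly when $\Phi_x$ is irrational. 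Since $\Phi_x$ has no pole at $u=\infty$, this degree is precisely the degree of $\Phi_x$ as a self–map of $\mathbb{P}^1$. The goal is thus that $\Phi_x$ and $\Phi_{x'}$ have equal degree for all $x,x'$.

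The two inputs that make this work are the following. First, quotients of independent solutions of (\ref{DEQA}) at different base points differ by a real M\"obius map: with the constant matrix $A=\mathbf{F}^{-1}(x')\mathbf{F}(x)$ (so $\det A=1$ by constancy of the Wronskian) one gets, exactly as in Remark \ref{MOEBIUS-INV}, that $h_{x'}=M\circ h_x$ where $M(w)=(A_{22}w+A_{21})/(A_{12}w+A_{11})$ preserves $H_+$. Second, for the canonical $g_R(z)=-\tfrac{R}{\pi}\ln(1-\tfrac{\pi z}{R})$ a constant inner shift is absorbed into an affine reparametrization of $t$: writing $c=x'-x$ and using $e^{-\pi g_R(t)/R}=1-\tfrac{\pi t}{R}$ one checks $c+g_R(t)=g_R(\alpha t+\beta)$ with $\alpha=e^{-\pi c/R}$ and $\beta=\tfrac{R}{\pi}(1-e^{-\pi c/R})$. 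Combining these with $x'+g_R(t)=x+\bigl(c+g_R(t)\bigr)=x+g_R(\alpha t+\beta)$ yields, as germs at $t=0$, the key relation $\Psi_{x'}(t)=M\bigl(\Psi_x(\alpha t+\beta)\bigr)$; passing to $u=1/t$ this reads $\Phi_{x'}=M\circ\Phi_x\circ B$, where $B(u)=u/(\alpha+\beta u)$ is M\"obius ($\det\neq 0$ since $\alpha\neq 0$).

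It then remains only to observe that both $M$ and $B$ are degree–one rational maps, so $\deg\Phi_{x'}=\deg M\cdot\deg\Phi_x\cdot\deg B=\deg\Phi_x$ when $\Phi_x$ is rational, while $\Phi_{x'}$ is rational if and only if $\Phi_x$ is, so the infinite–rank case is covered simultaneously. Either way the ranks at $x'$ and $x$ coincide, which gives (\ref{RANK-GF}). The hard part is the second input above, the identity absorbing the base–point shift into an affine change of the $g_R$–variable; it is here that the exponential/logarithmic form of the canonical $g_R$ is essential, since it conjugates the translation flow $s\mapsto s+c$ to an affine action, and for a general $g_R$ as in Definition \ref{HORS} one would instead restrict to this choice or separately check that the rank does not depend on the admissible $g_R$. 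The M\"obius relation $h_{x'}=M\circ h_x$ and the degree invariance are then routine.
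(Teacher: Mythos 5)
Your proposal is correct, and its skeleton is the same as the paper's: reduction to the unit tangent bundle via Proposition \ref{PROP-RANK-RAY}, identification of the Hankel entries at $\phi_x\mathrm{v}$ with the Taylor coefficients of $h_x\circ T_x\circ g_R$ at $0$, and the Kronecker--Gantmacher criterion (Proposition \ref{PROP-RAT-RANK}(ii)) converting rank into the degree of a rational function. Where you genuinely add something is in how the base-point change is handled. The paper disposes of it in one line: the quotients $h_{x_1}$ and $h_{x_2}$ are related by a real M\"obius transformation, ``which is rational of degree 1.'' That remark only controls the outer composition; it does not by itself address the inner translation, since $h_{x_2}\circ T_{x_2}\circ g_R = M\circ\bigl(h_{x_1}\circ T_{x_1}\bigr)\circ T_c\circ g_R$ with $c=x_2-x_1$, and $T_c\circ g_R\neq g_R$. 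Your identity $c+g_R(t)=g_R(\alpha t+\beta)$ with $\alpha=e^{-\pi c/R}$, $\beta=\tfrac{R}{\pi}\bigl(1-e^{-\pi c/R}\bigr)$, valid for the canonical logarithmic $g_R$, is exactly the missing ingredient: it converts the shift into pre-composition with a degree-one map, whence $\Phi_{x'}=M\circ\Phi_x\circ B$ and the degree --- equivalently the rank, finite or infinite --- is preserved. So your write-up makes explicit a step the paper elides, and your caveat that for a general admissible $g_R$ of Definition \ref{HORS} the translation cannot be absorbed this way (so one should fix the canonical choice, or argue independence of the admissible $g_R$ separately) is a fair reading; the paper's subsequent uses of this rank function (Corollary \ref{COR-F-R-EXP-TRM}, Proposition \ref{PROP-PJF}) do work with the logarithmic $g_R$. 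Two small points in your argument that deserve the care you gave them: the reparametrization $\phi_x\mathrm{v}=\|\mathrm{v}\|\,\phi_{\|\mathrm{v}\|x}(\mathrm{v}/\|\mathrm{v}\|)$ combined with ray-invariance is a correct, slightly more explicit version of the paper's reduction to $UM$; and the degree count is legitimate precisely because $\Phi_x$ vanishes at $u=\infty$, so the Gantmacher rank equals the number of finite poles, i.e.\ the degree of $\Phi_x$ as a self-map of the Riemann sphere, which is invariant under pre- and post-composition with M\"obius maps, covering the irrational (infinite-rank) case as well since M\"obius maps are invertible.
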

\begin{proof}
We already noted  that $\mathrm{R}^{\,\sigma, \,g_R}
(\mathrm{v})=1$ for any $\mathrm{v}\in M\subset TM$. Moreover, in light of Proposition
\ref{PROP-RANK-RAY}, it suffices to consider the flow on the unit tangent bundle $UM\subset TM$.
So  let $\mathrm{v}\,\in\, TM$ with $\|\mathrm{v}\|\,=\,1$
and take the unit-speed geodesic $\gamma\colon \mathrm{R}\mapsto{M}$ given by  $\gamma(t)\,=\,\pi\left(\phi_t\mathrm{v}\right)$\,.

 With notation as in the proofs of Theorem \ref{TH-EXTEND-FINITE-R} and Theorem \ref{TH-EXTEND-TUBE-FINITE},
for any $x\,\in\,\mathrm{R}$,
take the  quotient
$h_{{x}}$ of the two fundamental
 solutions at $x$ of equation in (\ref{DE-GAMMA}),
 and consider the function of $s$
 \begin{equation}\label{EQ-hx}
 \,\left(h_x\circ T_x\circ{g_R}\right)(s)\,=\,h_{{x}}\left(x+g_R (s)\right)
  \end{equation}
  which is defined for $s\in (\,-\epsilon,\,\epsilon)\subset \mathrm{R}$ for some $\epsilon >0$.
Since for all integer $n\geq 0$,
$$(h_x\circ{T_x}\circ{g_R})^{(n)}(0)\,
=\,\mathrm{S}^{\,\sigma,\,g_R}_n(\phi_x\mathrm{v}),
$$
then
$$
\mathrm{R}^{\,\sigma, g_R}(\phi_x\mathrm{v})=\operatorname{Rank}
\left[
\frac{{(h_x\circ{T_x}\circ{g_R})}^{(i+j-1)}(0) }{(i+j-1)!}
\right]_{\,i,\,j\,=\,1}^\infty.
   $$

Now, by Proposition
\ref{PROP-RAT-RANK} and the transformation of degree 1, $u\mapsto t^{-1}$, it follows that
the rank of the infinite matrix in the right-hanside is $N<\infty$ if and only if
$$(h_x\circ{T_x}\circ{g_R})(s)$$ is a rational function in $s$ of degree $N$. Such value of $N$ is independent of $x$ since for any $x_1$ and $x_2$ in $\mathbb{R}$ the quotients
$h_{x_1}$ and $h_{x_2}$
are related by a (real) Mo\"{e}bius transformation,
which is rational of degree 1.
\end{proof}
As a consequence
the following aspect of Theorem 7 in \cite{GANTMACHER2} appears in the context of the Schwarzian functions.
 \begin{Cor}\label{PROP-F-R-SR}
 With notation as in (\ref{GF}), for  $\mathrm{v} \in TM $  it holds
 \begin{equation}\label{F-RANK}
 \mathrm{R}^{\,\sigma, \,g_R}(\mathrm{\mathrm{v}})=N<\infty,
  \end{equation}
  if $N$ is the smallest natural number so that for each  $n\geq 1$ the function of $t$
$${\mathrm{S}}^{\,\sigma,\,g_R}_{2N+n}(\phi_t\mathrm{v})$$ can be written, for all $t\in \mathbb{R}$,   in terms of the set of
functions  of $t$ $$\left\{ {\mathrm{S}}^{\,\sigma, \,g_R}_1(\phi_t\mathrm{v}),\,\ldots,\,{\mathrm{S}}^{\,\sigma, \,g_R}_{2\,N}(\phi_t\mathrm{v})\right\}$$ as
\begin{equation}\label{SCHW-LIN-DEP}
{\mathrm{S}}^{\,\sigma, \,g_R}_{2N+n}(\phi_t\mathrm{v})\,=\,
   \frac{
        p_{n,\, N}\left( {\mathrm{S}}^{\,\sigma, \,g_R}_{1}(\phi_t\mathrm{v}),\cdots, {\mathrm{S}}^{\,\sigma, \,g_R}_{2N}(\phi_t\mathrm{v})\right)}
        {\left({\mathrm{D}}^{\,\sigma, \,g_R}_{N}(\phi_t\mathrm{v})\right)^{n+1}},
 \end{equation}
 where $p_{\,n, N}\,=\,p_{n,\, N}(X_1,\,\cdots,\, X_{2N})$ are universal polynomials with coefficients dependent on $t$. (The statement includes ${\mathrm{S}}^{\,\sigma, \,g_R}_1(\phi_t\mathrm{v})\equiv1$, for convenience)\,.
\end{Cor}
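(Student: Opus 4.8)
The plan is to recognize (\ref{SCHW-LIN-DEP}) as the Schwarzian incarnation of the elimination built into Theorem 7 of \cite{GANTMACHER2} for a Hankel matrix of finite rank, with the flow-invariance of the rank supplying the needed uniformity in $t$. First I would fix $\mathrm{v}\in TM$ and, as in the proof of Proposition \ref{PROP-RANK-RAY}, abbreviate $\mathrm{C}_n(\phi_t\mathrm{v})=\mathrm{S}^{\,\sigma,\,g_R}_n(\phi_t\mathrm{v})/n!$, so that the object to analyze is the one-parameter family of infinite Hankel matrices $[\mathrm{C}_{i+j-1}(\phi_t\mathrm{v})]_{i,j=1}^{\infty}$. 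By Proposition \ref{PROP-RANK-GF} this matrix has rank $\mathrm{R}^{\,\sigma,\,g_R}(\mathrm{v})=N$ for \emph{every} $t$, so it is enough to produce, at each fixed $t$, the representation (\ref{SCHW-LIN-DEP}) for a rank-$N$ Hankel matrix and then observe that the algebraic recipe producing it is the same for all $t$.

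At a fixed $t$, Proposition \ref{PROP-RAT-RANK}(i) guarantees that the leading determinant $\mathrm{D}^{\,\sigma,\,g_R}_N(\phi_t\mathrm{v})$ is nonzero, so the first $N$ columns are independent; by the characterization recalled around (\ref{R0-H})--(\ref{EQ-RANK-SEQ-EQS}), every later column is a linear combination of them, which is the recursion $\mathrm{C}_{N+k}=\sum_{i=1}^{N} a_i\,\mathrm{C}_{i+k-1}$ valid for all $k\geq 1$. Reading off the first $N$ of these equations gives a linear system for $a_1,\dots,a_N$ whose coefficient matrix is precisely the nonsingular $N\times N$ Hankel block; Cramer's rule then writes each $a_i$ as a quotient with denominator (a factorial multiple of) $\mathrm{D}^{\,\sigma,\,g_R}_N(\phi_t\mathrm{v})$ and numerator polynomial in $\mathrm{C}_1,\dots,\mathrm{C}_{2N}$, hence in $\mathrm{S}^{\,\sigma,\,g_R}_1,\dots,\mathrm{S}^{\,\sigma,\,g_R}_{2N}$ evaluated at $\phi_t\mathrm{v}$.

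With the recursion available I would eliminate upward: each use of $\mathrm{C}_{N+k}=\sum_i a_i\,\mathrm{C}_{i+k-1}$ expresses one new high-index entry through strictly lower ones, and since every $a_i$ already carries one factor of $\mathrm{D}^{\,\sigma,\,g_R}_N$ in its denominator, the $n$ eliminations needed to reach index $2N+n$ accumulate at most $n$ such factors. Placing the result over the common denominator $(\mathrm{D}^{\,\sigma,\,g_R}_N)^{\,n+1}$ --- which costs only an extra factor of $\mathrm{D}^{\,\sigma,\,g_R}_N$, itself a polynomial in $\mathrm{S}^{\,\sigma,\,g_R}_1,\dots,\mathrm{S}^{\,\sigma,\,g_R}_{2N-1}$ --- and absorbing the factorial normalizations yields (\ref{SCHW-LIN-DEP}) with $p_{n,N}$ a polynomial determined solely by the abstract elimination, hence depending only on $n$ and $N$. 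Minimality of $N$ is then immediate: the very same recursion exhibits every column past the $N$-th as dependent on the first $N$, so were an analogous representation available with some $N'<N$ the rank would be at most $N'$, contradicting $\mathrm{R}^{\,\sigma,\,g_R}(\mathrm{v})=N$ (equivalently $\mathrm{D}^{\,\sigma,\,g_R}_N(\phi_t\mathrm{v})\neq 0$ by Proposition \ref{PROP-RAT-RANK}(i)).

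The main obstacle is the uniformity bookkeeping rather than any single hard estimate: I must verify that the Cramer numerators, pushed through $n$ rounds of elimination, really collapse into a single universal polynomial $p_{n,N}(X_1,\dots,X_{2N})$ independent of $\mathrm{v}$ and of the chosen geodesic, and that the denominator never grows beyond $(\mathrm{D}^{\,\sigma,\,g_R}_N)^{\,n+1}$. Keeping the factorial discrepancy between $\mathrm{S}^{\,\sigma,\,g_R}_k$ and $\mathrm{C}_k=\mathrm{S}^{\,\sigma,\,g_R}_k/k!$ from spoiling this uniformity, and making explicit the determinantal identities underlying Theorem 7 of \cite{GANTMACHER2}, is where the care is required.
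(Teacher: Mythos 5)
Your proposal is correct and follows essentially the same route as the paper: both invoke Proposition \ref{PROP-RANK-GF} for $t$-independence of the rank, the Hankel characterization of Proposition \ref{PROP-RAT-RANK} with the recursion (\ref{EQ-RANK-SEQ-EQS}), Cramer's rule to write $a_i(t)\,\mathrm{D}_N(t)$ as a universal polynomial in the $\mathrm{C}_k$, and nonvanishing of $\mathrm{D}_N(t)$ to control the denominator $(\mathrm{D}_N)^{n+1}$. The only cosmetic difference is bookkeeping: the paper packages your ``$n$ rounds of upward elimination'' as the $n$-th power of the companion matrix in (\ref{MATRIX-H-0}), with the factor $(\mathrm{D}_N(t))^{-n-1}$ extracted explicitly, which settles the uniformity concern you raise.
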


\begin{proof}
Set, for all integer $n\,\geq\,1$ and all $t\in \mathbb{R}$, $$\mathrm{C}_n(t)\,=\,
\frac{{\mathrm{S}}^{\,\sigma,g_R}_{n}(\phi_t\mathrm{v})}{n!}
\,,$$
and $$\mathrm{D}_N(t)\,=\,{\mathrm{D}}^{\,\sigma, \,g_R}_{N}(\phi_t\mathrm{v})\,.$$

The fact, proven in Proposition \ref{PROP-RANK-GF}, that the rank of the infinite matrix
$\displaystyle \left[\mathrm{C}_{i+j-1}(t)
\right]_{\,i,\,j\,=\,1}^{\infty}$
is independent of $t$,
together with Proposition \ref{PROP-RAT-RANK} and the sequence of equalities (\ref{EQ-RANK-SEQ-EQS}),  means that the noted rank is  $N\,<\,\infty$ if and only if  such is the smallest natural number with the property that
there are functions $a_1(t),\,\cdots, \,a_N(t)$ defined for all $t\in \mathbb{R}$ so that
for every integer $n\,\geq\,0$,
\begin{equation}\label{R2-H}
\left[{\mathrm{C}}_{i+j+\,n-1}(t)
\right]_{\,i,\,j\,=\,1}^{N}\,
\left[\begin{array}{c}a_1(t)\\\vdots\\a_N(t)\end{array}\right]\,
\,=\,
\left[\begin{array}{c}{\mathrm{C}}_{N+1+\,n}(t)\\\vdots\\
{\mathrm{C}}_{2N+\,n}(t)\end{array}\right]\,.
\end{equation}
From the equality for  $n\,=\,0$, namely
\begin{equation}\label{R1-H-0}
\left[\mathrm{C}_{i+j-1}(t)
\right]_{\,i,\,j\,=\,1}^{N}\,\left[\begin{array}{c}a_1(t)\\\vdots\\a_N(t)\end{array}\right]\,
\,=\,
\left[\begin{array}{c}\mathrm{C}_{N+1}(t)\\\vdots\\
{\mathrm{C}}_{2N}(t)\end{array}\right]\,,
\end{equation}
and induction,
the equalities above are equivalent to
\begin{eqnarray}\label{MATRIX-H-0}
&
\left[\begin{array}{c}{\mathrm{C}}_{N+1+\,n}(t)\\\vdots\\
{\mathrm{C}}_{2\,N+\,n}(t)\end{array}\right]\,=\,
\left[{\mathrm{C}}_{i+j-1}(t)
\right]_{\,i,\,j\,=\,1}^{N}\,
\underset{(*)}{\underbrace{\left[\begin{array}{ccccc}0&0&\ldots&0&a_1(t)
\\1&0&\ldots&0&a_2(t)\\
0&1&\ddots&\vdots&\vdots\\
\vdots&\ddots&\ddots&0&\vdots
\\0&\ldots&0&1&a_N(t)\end{array}\right]^n\,
\left[\begin{array}{c}a_1(t)\\\vdots\\a_N(t)\end{array}\right]}}
\,,
&
\end{eqnarray}
for all integer $n\geq 0$,
 exhibiting the functions ${\mathrm{C}}_{2N+n}(t)$ for every integer $n\,\geq\,0$ as rational in ${\mathrm{C}}_2(t),\,\cdots,\,{\mathrm{C}}_{2N}(t)$. Now, since $\mathrm{D}_N(t)\neq 0$ for all $t\in \mathbb{R}$ by Proposition \ref{PROP-RAT-RANK} and Proposition \ref{PROP-RANK-GF}, that the particular form of such dependence is as
 claimed follows by writing (*) in (\ref{MATRIX-H-0})
 as
 \begin{eqnarray}\notag
  \left(\mathrm{D}_N(t)\right)^{-n-1}\,
 \left[\begin{array}{ccccc}0&0&\ldots&0&p_1
\\{\mathrm{D}}_N(t) &0&\ldots&0&p_2\\
0&{\mathrm{D}}_N(t) &\ddots&\vdots&\vdots\\
\vdots&\ddots&\ddots&0&\vdots
\\0&\ldots&0&{\mathrm{D}}_N(t) &p_N\end{array}\right]^n\,
\left[\begin{array}{c}p_1
\\\vdots\\p_N\end{array}\right]\,,
\end{eqnarray}
where, from Cramer's rule in (\ref{R1-H-0}), for $i\,=\,1,\,\ldots,\,N$,
\begin{equation}
{\mathrm{p}}_{i,N}({\mathrm{C}}_1(t)
\,\cdots,\,{\mathrm{C}}_{2N}(t))\,=\,a_i(t)\, {\mathrm{D}}_N(t)\,,
\end{equation}
 with ${\mathrm{p}}_{i,N}$ polynomials in the  ${\mathrm{C}}_k(t)$ for $k$ as indicated.
\end{proof}
We introduce the following generating
functions.
\begin{Def}\label{DEF-GEN-FUNC}
Let $\mathrm{G}^{\,\sigma, \, g_R}\colon TM\times\mathbb{R}\times \mathbb{R}\to \mathbb{R}$
be defined by
\begin{equation}\label{FLA-GEN-FUNC-GR-SCH}
\mathrm{G}^{\,\sigma, \, g_R}(\mathrm{v},\, x,\,s)\,=\,
\sum_{n\,=\,1}^\infty \mathrm{S}^{\,\sigma,\,g_R}_n(\phi_x\mathrm{v})\,
  \frac{s^n}{n!}\,.
  \end{equation}
  Here $g_R$ is given as before, according to Definition
  \ref{HORS}. In particular
 for $R\,=\,\infty$ the above reduces to
\begin{equation}\label{FLA-GEN-FUNC-SCH}
\,\mathrm{G}^{\,\sigma}(\mathrm{v},\, x,\,s)\,:=\,
\sum_{n\,=\,1}^\infty \mathrm{S}^{\,\sigma}_n(\phi_x\mathrm{v})\,
  \frac{s^n}{n!}\,.
\end{equation}
\end{Def} Then
Proposition \ref{PROP-RAT-RANK} and Proposition
  \ref{PROP-RANK-GF} imply the following.
\begin{Cor}\label{COR-F-R-EXP}
Let $R\in (0,\,\infty]$ and $\mathrm{v}\in UM$.  Then
 $\mathrm{R}^{\,\sigma,\,g_R}(\mathrm{v})
 \,=\,N\,<\,\infty$ if and only if for all $x\in \mathbb{R}$
 the generating function
$\mathrm{G}^{\,\sigma,\,g_R}(\mathrm{v},\,x,\,s)$ is  rational in $s$ of degree $N$\,.
\end{Cor}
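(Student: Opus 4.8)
The plan is to recognize the generating function $\mathrm{G}^{\,\sigma,\,g_R}(\mathrm{v},\,x,\,s)$ as the holomorphic germ $h_x\circ T_x\circ g_R$ already analyzed in the proof of Proposition \ref{PROP-RANK-GF}, and then to read off the claim from that proposition together with Proposition \ref{PROP-RAT-RANK}. First I would record that, because $h_x(x)=0$ and $g_R(0)=0$, the composition $h_x\circ T_x\circ g_R$ vanishes at $s=0$, while its higher Taylor coefficients are governed by $(h_x\circ T_x\circ g_R)^{(n)}(0)=\mathrm{S}^{\,\sigma,\,g_R}_n(\phi_x\mathrm{v})$, as established in the proof of Proposition \ref{PROP-RANK-GF}. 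Hence, as a germ at the origin,
\[
\mathrm{G}^{\,\sigma,\,g_R}(\mathrm{v},\,x,\,s)=\sum_{n\geq1}\frac{\mathrm{S}^{\,\sigma,\,g_R}_n(\phi_x\mathrm{v})}{n!}\,s^n=(h_x\circ T_x\circ g_R)(s),
\]
so the generating function is rational in $s$ of degree $N$ exactly when $h_x\circ T_x\circ g_R$ is.

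Next I would set $\mathrm{C}_n(x)=\mathrm{S}^{\,\sigma,\,g_R}_n(\phi_x\mathrm{v})/n!$ and apply Proposition \ref{PROP-RAT-RANK}(ii) to the Hankel matrix $[\mathrm{C}_{i+j-1}(x)]_{i,j=1}^{\infty}$: its rank is $N<\infty$ if and only if $\sum_{n\geq1}\mathrm{C}_n(x)\,u^{-n}$ is rational in $u$ of degree $N$. The degree-one substitution $u\mapsto s^{-1}$ carries this series to $\sum_{n\geq1}\mathrm{C}_n(x)\,s^n=\mathrm{G}^{\,\sigma,\,g_R}(\mathrm{v},\,x,\,s)$; since a M\"{o}bius (degree-one) change of variable preserves the degree of a rational function---the very transformation invoked in Proposition \ref{PROP-RANK-GF}---rationality of degree $N$ in $u$ is equivalent to rationality of degree $N$ in $s$. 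This yields $\mathrm{R}^{\,\sigma,\,g_R}(\phi_x\mathrm{v})=N$ if and only if $\mathrm{G}^{\,\sigma,\,g_R}(\mathrm{v},\,x,\,s)$ is rational in $s$ of degree $N$.

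To finish I would eliminate the dependence on $x$ using Proposition \ref{PROP-RANK-GF}, which gives $\mathrm{R}^{\,\sigma,\,g_R}(\phi_x\mathrm{v})=\mathrm{R}^{\,\sigma,\,g_R}(\mathrm{v})$ for every $x\in\mathbb{R}$; combined with the previous step this produces the stated equivalence, with a single degree $N$ valid for all $x$ simultaneously. The main---though mild---obstacle is the degree bookkeeping under $u\mapsto s^{-1}$: one must verify that the pole potentially created at $s=0$ or $s=\infty$ does not alter the count. This is automatic here, since $\mathrm{G}^{\,\sigma,\,g_R}$ vanishes at $s=0$ and the series $\sum\mathrm{C}_n(x)\,u^{-n}$ vanishes at $u=\infty$, so the two proper rational functions are matched pole-for-pole; and in any case this bookkeeping has already been settled inside the proof of Proposition \ref{PROP-RANK-GF}, so no additional argument is required.
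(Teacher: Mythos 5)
Your proposal is correct and follows essentially the same route as the paper, which states Corollary \ref{COR-F-R-EXP} as an immediate consequence of Proposition \ref{PROP-RAT-RANK}(ii) and Proposition \ref{PROP-RANK-GF}, using exactly the degree-one substitution $u\mapsto s^{-1}$ you invoke (and which already appears in the proof of Proposition \ref{PROP-RANK-GF}). Your identification of $\mathrm{G}^{\,\sigma,\,g_R}(\mathrm{v},x,s)$ with the germ of $h_x\circ T_x\circ g_R$ is a harmless extra (it needs real analyticity for convergence, as the paper notes only in Corollary \ref{COR-F-R-EXP-TRM}, but the Hankel-rank equivalence is purely a statement about the coefficient sequence, so nothing is lost).
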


\begin{Cor}\label{COR-F-R-EXP-TRM}
Let the Riemannian metric $g$ on $M$ be real analytic, and to be specific,
consider
$\displaystyle g_R(s)\,=\,-\frac{R}{\pi}\ln\left(1-\frac{\pi\,s}{R}\right)$.
Given $R\in (0,\,\infty]$ and $\mathrm{v}\in UM$, $\mathrm{R}^{\,\sigma,\,g_R}(\mathrm{v})
 \,=\,N\,<\,\infty$ if and only if
\begin{equation}\label{GEN-FUNC-RU}
 \mathrm{G}^{\,\sigma}(\mathrm{v},\,x,\,s)\,
 \,=\,
\mathcal{R}(\mathrm{v},\,x,\, u(s)),
\end{equation}
with $\mathcal{R}$ is rational in $u$ of degree $N$ and
\begin{equation}\label{EQ-RAT-U(S)}
u(s)\,=\,\frac{R}{\pi}\,(1-e^{ \frac{\pi}{R}\,s})\,,
\end{equation}
interpreted as $u\,=\,s$ when $ R\,=\,\infty$\,.

Moreover if  the adapted complex structure is defined on the entire $TM$, for every $x\in \mathbb{R}$ the poles and zeros of $\mathcal{R}(\mathrm{v},\,x,\, u)$, as a rational function of $u$, are all simple and real, the poles with negative residues.
  \end{Cor}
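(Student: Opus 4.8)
The plan is to reduce both generating functions to a single object, the quotient $h_x$ of fundamental solutions along the geodesic $\gamma(t)=\pi(\phi_t\mathrm{v})$, and to read each generating function off as a Taylor expansion of $h_x$. First I would record, using Proposition \ref{H-SCHWARZ} together with $h_x(x)=0$ and $g_R(0)=0$ (so the $n=0$ term drops), the two identities
\begin{equation}\notag
\mathrm{G}^{\,\sigma}(\mathrm{v},x,s)=h_x(x+s),\qquad
\mathrm{G}^{\,\sigma,g_R}(\mathrm{v},x,s)=h_x\!\left(x+g_R(s)\right),
\end{equation}
since $\mathrm{S}^{\,\sigma,g_R}_n(\phi_x\mathrm{v})=\mathcal{S}^{\vartheta,g_R}_n(x)=(h_x\circ T_x\circ g_R)^{(n)}(0)$ for $\vartheta=\sigma\circ\gamma$. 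Thus the two generating functions differ only by precomposition with $g_R$ in the last slot, i.e.\ $\mathrm{G}^{\,\sigma,g_R}(\mathrm{v},x,\cdot)=\mathrm{G}^{\,\sigma}(\mathrm{v},x,g_R(\cdot))$.

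For the main equivalence I would invoke Corollary \ref{COR-F-R-EXP}: $\mathrm{R}^{\,\sigma,g_R}(\mathrm{v})=N<\infty$ holds exactly when, for every $x$, the function $\mathrm{G}^{\,\sigma,g_R}(\mathrm{v},x,\cdot)$ is rational of degree $N$; I name this rational function $\mathcal{R}(\mathrm{v},x,\cdot)$, so that $\mathcal{R}(\mathrm{v},x,\tau)=h_x(x+g_R(\tau))$. Because $g_R$ is a biholomorphism near $0$ with $g_R(0)=0$, it has the local inverse $u=g_R^{-1}$ recorded in (\ref{EQ-RAT-U(S)}), interpreted as $u=s$ when $R=\infty$. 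Substituting $\tau=u(s)$ then yields $\mathcal{R}(\mathrm{v},x,u(s))=h_x(x+g_R(g_R^{-1}(s)))=h_x(x+s)=\mathrm{G}^{\,\sigma}(\mathrm{v},x,s)$, which is precisely (\ref{GEN-FUNC-RU}). Conversely, if (\ref{GEN-FUNC-RU}) holds with $\mathcal{R}$ rational of degree $N$, reading the composition the other way gives $\mathrm{G}^{\,\sigma,g_R}(\mathrm{v},x,\cdot)=\mathcal{R}(\mathrm{v},x,\cdot)$ rational of degree $N$, so $\mathrm{R}^{\,\sigma,g_R}(\mathrm{v})=N$ again by Corollary \ref{COR-F-R-EXP}. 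The independence of $N$ from $x$ is automatic from Proposition \ref{PROP-RANK-GF}.

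For the \emph{moreover} clause I would show that $\mathcal{R}(\mathrm{v},x,\cdot)$ is a rational Pick function and then quote the classical structure of such functions. Indeed $\mathcal{R}(\mathrm{v},x,u)=h_x(x+g_R(u))$ is a composite of Pick maps: $g_R$ sends $H_+$ into the strip $\{0<\Im z<R\}\subset H_+$ by Definition \ref{HORS}, the shift $z\mapsto x+z$ is real, and—because the adapted complex structure is assumed on all of $TM$—Theorem \ref{TH-EXTEND-TUBE-FINITE} together with Theorem \ref{TH-EXTEND-FINITE-R} make $h_x$ a Pick function meromorphic on $\mathbb{C}$ (here $W(f_{x,1},f_{x,2})=1>0$ gives $\Im h_x>0$ on $H_+$). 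Hence $u\mapsto\mathcal{R}(\mathrm{v},x,u)$ maps $H_+$ into $H_+$, i.e.\ is Pick, and by the main part it is rational of degree $N$. A rational Pick function admits the Nevanlinna partial-fraction form $\mathcal{R}=\alpha u+\beta+\sum_j \gamma_j/(\lambda_j-u)$ with $\alpha\geq 0$, $\beta\in\mathbb{R}$, distinct real $\lambda_j$, and $\gamma_j>0$; thus its poles are simple and real with residue $-\gamma_j<0$, and since on each interval between consecutive poles $\mathcal{R}$ is strictly increasing (derivative $\alpha+\sum_j\gamma_j/(\lambda_j-u)^2>0$) and runs from $-\infty$ to $+\infty$, it has exactly one simple real zero there, so all zeros are simple and real as well.

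The generating-function identities and the change of variables are routine. The step needing the most care is the \emph{moreover} clause: one must correctly translate the global existence of the adapted structure into the Pick property of $h_x$ via Theorems \ref{TH-EXTEND-TUBE-FINITE} and \ref{TH-EXTEND-FINITE-R}, verify that precomposition with the Pick map $g_R$ (whose image lands in $H_+$) preserves the Pick property of the composite, and then deploy the partial-fraction description of rational Herglotz functions to fix the sign of the residues and the interlacing of the (simple, real) zeros and poles.
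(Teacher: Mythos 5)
Your proposal is correct and follows essentially the same route as the paper: you identify $\mathrm{G}^{\,\sigma,g_R}(\mathrm{v},x,\cdot)=h_x\circ T_x\circ g_R$ as a convergent Taylor series (using real analyticity), invoke Corollary \ref{COR-F-R-EXP} for the rank--rationality equivalence, and pass between the two generating functions via the substitution $u=g_R^{-1}$, exactly as in the paper's chain of identities (\ref{EQ-HX-COMP-LN})--(\ref{EQ-HX-RAT}); note in passing that your computation of $g_R^{-1}$, namely $u(s)=\frac{R}{\pi}\left(1-e^{-\frac{\pi}{R}s}\right)$, matches the paper's proof and exposes a sign typo in the exponent of (\ref{EQ-RAT-U(S)}) as stated. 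Your treatment of the \emph{moreover} clause, via composition of Pick maps and the partial-fraction form of rational Herglotz functions, is a correct and welcome expansion of what the paper dispatches in one terse sentence.
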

  \begin{proof}
  With notation as in
  Proposition
  \ref{PROP-RANK-GF},
  taking the unit-speed geodesic $\gamma\colon \mathrm{R}\mapsto{M}$ with $\dot{\gamma}(0)\,=\,\mathrm{v}$, for each $x\in \mathbb{R}$ we have the equality of power series
  \begin{eqnarray} \mathrm{G}^{\,\sigma, \, g_R}(\mathrm{v},\, x,\,t)\,&=&\,
\sum_{n\,=\,1}^\infty \mathrm{S}^{\,\sigma,\,g_R}_n(\phi_x\mathrm{v})\,
  \frac{t^n}{n!}\\\notag
  &=&
  \,\sum_{n\,=\,1}^{\infty}\left(h_x\circ{T_x}
  \circ{g_R}\right)^{(n)}(0)\frac{t^n}{n!}\,.
  \end{eqnarray}
  By the  real analyticity of $g$ and
   given the choice of
    $g_R$ the power series converges near $t\,=\,0$, thus
    \begin{eqnarray}\label{EQ-HX-COMP-LN}
  \mathrm{G}^{\,\sigma, \, g_R}(\mathrm{v},\, x,\,t)\,=\,h_{x}\left(x-\frac{R}{\pi}
  \ln(1-\frac{\pi\,t}{R})\right)
  \,.
  \end{eqnarray}
  By Corollary \ref{COR-F-R-EXP} the left hand-side is rational in $t$ of degree $N$, say
  \begin{equation}\label{EQ-HX-RAT}
     \mathrm{G}^{\,\sigma, \, g_R}(\mathrm{v},\, x,\,t)\,
     =\,\mathcal{R}^{\,\sigma, \, g_R}(\mathrm{v}(\mathrm{v},\,x,\, t),
     \end{equation}
   if and only if
   $\mathrm{R}^{\,\sigma,\,g_R}(\phi_x\mathrm{v})
 \,=\,N\,<\,\infty $,
    this rank equal to $\mathrm{R}^{\,\sigma,\,g_R}(\mathrm{v})$ by Proposition \ref{PROP-RANK-GF}.

   So, the first part of the Corollary follows from
   (\ref{EQ-HX-COMP-LN}), (\ref{EQ-HX-RAT}) and real analyticity by observing that
  \begin{eqnarray}\label{EQ-HX-COMP-LN}
  \mathcal{R}^{\,\sigma, \, g_R}(\mathrm{v}(x,\,\mathrm{v},\, \frac{R}{\pi}(1-e^{-\frac{\pi}{R}s}))\,&=&\,h_{x}(x+s)
  \\\notag
  &=&\,\sum_{n\,=\,1}^\infty h^{(n)}_{x}(x) \frac{s^n}{n!}
  \\\notag
  &=&\sum_{n\,=\,1}^\infty \mathrm{S}^{\,\sigma}_n(\phi_x\mathrm{v})\,
  \frac{s^n}{n!}\\\notag
  &=&\,\mathrm{G}^{\,\sigma}(\mathrm{v},\, x,\,s)\,.
  \end{eqnarray}

  In addition, if the adapted structure is defined on the whole $TM$, for each $x\in\mathbb{R}$  the function $h_x(x+t)$ on the right-hand side of (\ref{EQ-HX-COMP-LN}) is a Pick function in $t$. Thus, in light of the fact that  $g_R(t)\in \mathbb{R}$ precisely when  $t\in \mathbb{R}$\, the conditions imposed
  on $\mathcal{R}^{\,\sigma, \, g_R}(\mathrm{v},\,x,\,  u)$
  as a rational function in  $u$ follow\,.
  \end{proof}

\section{Schwarzians with purely imaginary  radius and closed geodesics}\label{SECT-RANK-C}
To complete our remarks on higher order Schwarzians
 we relate formally the periodicity
of the geodesic flow along a trajectory with the
 the finiteness of the rank of an infinite Hankel matrix now constructed with complex valued higher order Schwarzians defined by
 considering complex values for the radius $R$ as indicated in Remark \ref{HORS-C} for the Schwarzians in $\mathbb{R}$.
 \begin{Def} For any $0\neq \lambda \in \mathbb{R}$  and integer $n\geq 0$ put
 $$\fbox{$\displaystyle \mathrm{S}_n^{\,\sigma, \,g_{\sqrt{-1}\lambda}}
 :=\mathrm{S}_n^{\,\sigma, \,g_{R}}\big{|}_{R=\sqrt{-1}\lambda}$}\,.
 $$
 \end{Def}
A companion to Definition \ref{DEF-RANK} is the following.
\begin{Def}\label{DEF-RANK-C}
Given $\mathrm{v}\in TM$ put
\begin{equation}\label{DEF-RANK-TM}
\mathrm{R}^{\,\sigma, \,g_{\sqrt{-1}\lambda}}(\mathrm{v})\,:=\,
\operatorname{Rank}
\left[
\frac{{\mathrm{S}}^{\,\sigma, \,g_{\sqrt{-1}\lambda}}_{i+j-1}
(\mathrm{v})}{(i+j-1)!}\right]_{\,i,\,j\,=\,1}^{\infty}\,,
\end{equation}
and consider
 the function $\mathrm{R}_{\mathbb{C}}^{\,\sigma,\,g_R}\colon \,TM \,\to\,\mathbb{N}\,\cup\,\{\infty\}$ defined by letting $\mathrm{v}\in TM$ vary in (\ref{DEF-RANK-TM})\,.
\end{Def}
\begin{Prop}\label{PROP-PJF}
(Periodicity of Jacobi fields and the adapted complex structure). Let $M$ be a two-dimensional real analytic manifold with a real analytic complete Riemannian metric $g$ with the adapted complex structure defined on the entire $TM$.
Let $\gamma\colon\mathbb{R}\to M$  be any unit speed closed geodesic of $M$.

Take $\displaystyle g_R\,=\,\frac{R}{\pi}
 \ln{(1-\frac{\pi}{R}z)}$.

Then if the
 normal Jacobi fields  \footnote{Normal means point-wise orthogonal to $\dot{\gamma}$} along
 $\gamma$ are  periodic with period $T$ then there is an integer  $N\geq 1$ so that  for all $t\in\mathbb{R}$,
\begin{equation}\label{RANK-CLOSED-TM}
\mathrm{R}_{\mathbb{C}}^{\,\sigma, \,g_{\sqrt{-1}\frac{T}{2}}}(\mathrm{\dot{\gamma}(t)})\,=\,N\,.
\end{equation}
\end{Prop}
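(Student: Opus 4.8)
The plan is to reduce the statement to the rationality criterion of Proposition \ref{PROP-RAT-RANK}(ii) and Corollary \ref{COR-F-R-EXP}, exploiting that the choice $\lambda = T/2$ is precisely the one for which the substitution defining the $g_{\sqrt{-1}\lambda}$-Schwarzians becomes periodic with the period of $\gamma$. First I would record the geometric input: since $\gamma$ is a closed geodesic and its normal Jacobi fields are $T$-periodic, in the frame $\{\dot{\gamma},\star\dot{\gamma}\}$ the two independent solutions $f_1,f_2$ of $f'' + (\sigma\circ\gamma)f = 0$ are $T$-periodic, hence so is their quotient $h = f_2/f_1$. Because the adapted complex structure is defined on all of $TM$, the recollection in the proof of Theorem \ref{TH-EXTEND-TUBE-FINITE} (equivalently Theorem \ref{TH-EXTEND-FINITE-R} with $R=\infty$) shows that $h$ extends to a meromorphic function on all of $\mathbb{C}$, holomorphic with positive imaginary part on $\{\Im\zeta>0\}$, real on $\mathbb{R}$, with poles only on $\mathbb{R}$ at the zeros of $f_1$. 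By the identity theorem the relation $h(\zeta+T)=h(\zeta)$, valid on $\mathbb{R}$, then holds throughout $\mathbb{C}$.

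Next I would pass to the cylinder. Writing $q = e^{2\pi\sqrt{-1}\zeta/T}$ uniformizes $\mathbb{C}/T\mathbb{Z}$ as $\mathbb{C}^\ast$, and the $T$-periodic $h$ descends to a meromorphic function $H$ on $\mathbb{C}^\ast$ whose poles (finitely many per period) all lie on $|q|=1$. The point of taking $R = \sqrt{-1}\lambda = \sqrt{-1}T/2$ is that then $g_R(z) = \frac{R}{\pi}\ln(1-\frac{\pi}{R}z)$ satisfies $\frac{\pi T}{R} = -2\pi\sqrt{-1}$, so that $q\big(g_R(s)\big) = (1 + \tfrac{2\pi\sqrt{-1}}{T}s)^{-1}$ is a degree-one Möbius function of $s$. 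Consequently the generating function of Corollary \ref{COR-F-R-EXP} becomes $\mathrm{G}^{\,\sigma,g_R}(\dot{\gamma}(t),t,s) = h_t\big(t + g_R(s)\big) = H_t\big((1+\tfrac{2\pi\sqrt{-1}}{T}s)^{-1}\big)$, where $h_t$ is the quotient of the fundamental solutions at $t$, a real Möbius transform of $h$, so that $H_t$ is a Möbius transform of $H$ of the same degree. Thus it suffices to prove that $H$ is a rational function of $q$.

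The main step, and the principal obstacle, is to rule out an essential singularity of $H$ at the two ends $q=0,\infty$ of the cylinder; this is exactly where the global Pick property (i.e. the adapted structure existing on the whole $TM$) is essential. On $\{0<|q|<1\}$, that is $\Im\zeta>0$, $H$ is holomorphic and takes values in the upper half-plane, so composing with a Cayley transform yields a bounded holomorphic function on the punctured disk, which extends holomorphically across $q=0$ by Riemann's removable singularity theorem; hence $H$ is meromorphic at $q=0$. By Schwarz reflection $h$ maps $\{\Im\zeta<0\}$ into the lower half-plane, so the same argument on $\{|q|>1\}$ shows $H$ is meromorphic at $q=\infty$. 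Being meromorphic on all of $\mathbb{CP}^1$, $H$ is rational, of some degree $N\geq1$ (with $N\geq1$ because $h$ is nonconstant, its defining Wronskian being nonzero). Without the positivity forcing boundedness near the ends, one could face an essential singularity and infinite rank, so this is the crux.

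Finally I would conclude. For each $t$ the function $\mathrm{G}^{\,\sigma,g_{\sqrt{-1}T/2}}(\dot{\gamma}(t),t,s)$ is then rational in $s$ of the common degree $N$, so by the complex analogue of Proposition \ref{PROP-RAT-RANK}(ii) and Corollary \ref{COR-F-R-EXP} — whose proofs are purely algebraic and carry over verbatim to the complex parameter $R$ introduced in Remark \ref{HORS-C} — the Hankel rank satisfies $\mathrm{R}_{\mathbb{C}}^{\,\sigma, \,g_{\sqrt{-1}\frac{T}{2}}}(\dot{\gamma}(t)) = N$ for every $t$. Equivalently, the constancy in $t$ is the complex version of Proposition \ref{PROP-RANK-GF}, valid because any two $h_{t_1}$ and $h_{t_2}$ differ by a degree-one Möbius map, which preserves the degree of $H$.
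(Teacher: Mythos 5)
Your proof is correct, but it takes a genuinely different route from the paper's. The paper works entirely on the line: from the Fatou--Herglotz representation of the Pick function $h_x$ plus periodicity it extracts an explicit partial-fraction expansion $h_x(z)=\frac{\pi}{T}\sum_{i=1}^{N}r_i\cot\bigl(\frac{\pi}{T}(z-a_i)\bigr)$ over the $N$ poles $a_1<\dots<a_N$ in a period (the zeros of $f_{x,1}$, with $\alpha_x=0$ forced by periodicity), and then verifies by the tangent subtraction formula and the identity $\tan\bigl(\frac{\sqrt{-1}}{2}\ln(1-\frac{2\pi}{\sqrt{-1}T}z)\bigr)=\frac{\sqrt{-1}z}{z-\sqrt{-1}T/\pi}$ that each cotangent term becomes a M\"obius map in $z$ under the substitution $g_{\sqrt{-1}T/2}$, so that $F_{x,T/2}$ is explicitly rational, of degree exactly $N$ because the constants $A_i$ are pairwise distinct. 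You instead uniformize: the $T$-periodic extension $h$ descends through $q=e^{2\pi\sqrt{-1}\zeta/T}$ to a function $H$ on $\mathbb{C}^\ast$ with finitely many simple poles on $|q|=1$, the Pick property plus Cayley transform and Riemann's removable singularity theorem (with the maximum principle ruling out unimodular boundary values) kills the two potential essential singularities at $q=0,\infty$, and rationality of $H$ on $\mathbb{CP}^1$ follows with degree equal to the number of poles per period; the observation that $q\circ g_{\sqrt{-1}T/2}$ is itself a degree-one M\"obius function of $s$ then transfers rationality to the generating function. Your computation of that composition is correct (up to the paper's own sign inconsistency in $g_R$, which is harmless), and your degree count is automatic from the simplicity of the zeros of $f_1$, where the paper must check $A_i\neq A_j$ by hand. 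Both arguments conclude identically, via Gantmacher's rank--rationality theorem (whose algebraic proof is indifferent to the complex parameter $R=\sqrt{-1}\lambda$) and M\"obius invariance for independence of the base point. What each buys: the paper's explicit expansion yields the closed formula for $F_{x,T/2}$ in terms of the residues $r_i(x)$ and the $A_i(x,T)$, which it reuses in the constant-curvature remarks; your argument is shorter and more conceptual, making transparent \emph{why} $\lambda=T/2$ (and more generally $\lambda=nT/2$) is the right choice---the logarithmic $g_R$ inverts the exponential covering of the cylinder---at the cost of not producing the explicit rational function.
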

\begin{proof}
Let $\gamma$ be a unit speed geodesic parameterized of $M$.
Let
$x\in \mathbb{R}$ be fixed, and set $h_{x}(t)\,=\,f_{x,\,2}(t)/f_{x,\,1}(t)$, with $f_{x,\,1}$ and $f_{x,\,2}$ the pair of fundamental solutions
of $f^{\prime\prime}(t)+\sigma(\gamma(t))f(t)\,=\,0$
at $x$ in the sense of (\ref{FP}), each function identified with a normal Jacobi field as in the proof of Theorem \ref{TH-EXTEND-TUBE-FINITE}.

If all the  Jacobi fields normal to $\gamma$ are periodic with period $T$ then
\begin{equation}\label{EQ-hx-PERIOD}
h_{x}(t)\,=\,h_x(t+T)
\end{equation}
 for all $t\in \mathbb{R}\setminus{f^{-1}_{x,\,1}(0)}$.

 Since the adapted complex structure is defined on the entire $TM$, $h_x(z)$ is a Pick function in $z$,
hence representable,
 according to Fatou's formula, for $z\in\mathbb{C}\setminus \mathbb{R}$, by
 \begin{equation}\label{FATOU-F}
 h_x(z)\,=\,\alpha\,z\,+ \,\beta\,+\int_{-\infty}^\infty \,\left(\, \frac{1}{t-z}-\frac{t}{1+t^2}\right) \,d\mu(t),
 \end{equation}
  with constants $\alpha=\alpha_x,\, \beta=\beta_x \in \mathbb{R}$,  $\alpha_x\geq 0$,
  and with
 $d\mu(t)=d\mu_x(t)$ a non-negative Borel measure with $$\int_{-\infty}^\infty\frac{d\mu(t)}{1+t^2}\,<\,\infty\,.$$

 The  measure $d\mu(t)$ is determined by
  a weak limit,
  $$\int_{a}^bd\mu(t)\,=\,\lim_{y\mapsto 0+} \frac{1}{\pi}\,\int_{a}^b\,\,\Im h_{x}(t\,+\,\sqrt{-1}y)\,dt\,.$$
Thus, in our particular case, $h_x(z)$ is
  meromorphic is $z$, with  poles that are all real and simple, namely the zeros of $f_{x,\,1}(t)$, with negative residues\,.

  In addition, from (\ref{EQ-hx-PERIOD}), we have
 $\alpha=0$ in (\ref{FATOU-F}), and moreover, for some  integer
 \begin{equation}\label{EQ-N}
 N\,=\,N(x, T)\,\geq 1,
 \end{equation} the polar set, $
f_{x,1}^{-1}(0)$,  is of the form
$$
\{a_1+k\,T,\,\cdots, \,a_N+k\,T\,, \,k\in \mathbb{Z}\}
$$
where all the $a_i=a_i(x)$ are in $\mathbb{R}$ and satisfy
\begin{equation}\label{x-ai}
x-\frac{T}{2}<a_1<\cdots <a_N\leq x+\frac{T}{2}\,.
\end{equation}

The support of $d\mu(t)$ consists of a mass at each point in $\{a_i+\,k\,T$, $k\in \mathbb{Z}$, $i=1,\cdots,N\}$ with weight
$$r_i\,=\,r_i(x)\,=\,\lim_{t\mapsto a_i}(t-a_i)\,h_{x}(t)\;>\,0\,.$$
 Thus, using Herglotz formula,
 \begin{eqnarray}\label{P-H-TAN}
 h_{x}(z)\,&=&\,\sum_{i\,=\,1}^N\,\frac{r_i}{z-a_i}+
 r_i\,\sum_{k\,=\,1}^{\infty}\left(\frac{1}{z-a_i-k\,T}+
 \frac{1}{z-a_i+k\;T}\right)
 \\\notag
 &=&\,\frac{\pi}{T}\,
 \sum_{i\,=\,1}^N \,\frac{r_i}{
 \tan{\frac{\pi}{T}(z-a_i)}}\,.
 \end{eqnarray}

Now, observe  that
\begin{eqnarray}\label{P-H-COMP} F_{x,\,\lambda}(z):=
 h_{x}\left(x\,-\,\frac{\sqrt{-1}\lambda}{\pi}
 \ln{\left(1-\frac{\pi}{\sqrt{-1}\lambda}z\right)}
  \right)
  \end{eqnarray}
  is  rational in $z$ if we take $$\lambda\,=\,n\frac{T}{2}\,,$$ for arbitrary integer
  $n\neq 0$.
To check this explicitly, for $n=1$, consider the contributions to
(\ref{P-H-COMP}) of each of the terms in (\ref{P-H-TAN}). Use the identity $$\tan{(a-b)}\,=\,
\frac{\tan{a}\,-\,
\tan{b}}{ 1+\tan{a}\tan{b}}$$  with $a=x-a_i$,
and the identity
\begin{eqnarray}\label{TAN-COMP}
\tan{\left(\frac{\sqrt{-1}}{2}\,
\ln{(1-\frac{2\,\pi}{\sqrt{-1}T}z)}\right)}
&=&\,\frac{\sqrt{-1}z}{z-\sqrt{-1}\,\frac{T}{\pi}}\,,
\end{eqnarray}
to conclude that, for each $1\leq i\leq N$,
 \begin{equation}\label{P-H-COMP-1}
 \frac{1}{\tan{\left(\frac{\pi}{T}
 (x-a_i)\,-\,\frac{\sqrt{-1}}{2\,}
 \ln{(1-\frac{2\,\pi}{\sqrt{-1}T}z)}\right)\,}}
 \end{equation}
 simplifies to the Mo\"{e}bius transformation in $z$
 \begin{equation}\label{EQ-RED-MOEB}
\,\frac{\sqrt{-1}\,\,z +\,\frac{\,T}{\pi}A_i\,}
 {\,z-\,\sqrt{-1}\,(1-A_i)\,\frac{\,T}{\pi}\,},
\end{equation}
with
\begin{equation}\label{AI}
 A_i\,= \,A_i(x, T)\,=\,e^{\frac{\pi}{T}(x-a_i)\sqrt{-1}}\,\cos{ \frac{\pi}{T}(x-a_i)}\,.
 \end{equation}
Hence, $F_{x, \frac{T}{2}}(z)$ is rational in $z$, of degree depending on $T$, but not on $x$, since $F_{x_1, \frac{T}{2}}(z)$ is related to  $F_{x_2, \frac{T}{2}}(z)$ by a Mo\"{e}bius trasformation.
Moreover, by inspection of the denominators of (\ref{EQ-RED-MOEB}),
since by (\ref{x-ai}) we have that for all $1\leq i,j\leq N$
 $$A_i\neq A_j, \quad i\neq j\,,$$ it follows
 that the degree of the rational function $F_{x, \frac{T}{2}}(z)$ in $z$ is the number $N$, the possible
  non-constancy of which, indicated in (\ref{EQ-N}),  can be now fully described by $N=N(T)$\,.

So, we have
 \begin{eqnarray}\notag
 \fbox{$ \displaystyle
F_{x, \frac{T}{2}}(z)=
 \displaystyle \frac{\pi}{T}
 \sum_{i\,=\,1}^{N(T)}\, r_i(x)\,
 \,\frac{\sqrt{-1}\,\,z +\,\frac{T}{\pi}A_i(x,T)\,}
 {\,z-\,\sqrt{-1}\,(1-A_i(x,T))\,\frac{\,T}{\pi}\,}$}\,.
 \end{eqnarray}

Now,
the function $F_{x,\lambda}(z)$
 given by (\ref{P-H-COMP})
 is holomorphic in a neighborhood of $z=0\in \mathbb{C}$,
 and from the definitions we have
 $$F^{(n)}_{x,\lambda}(0)\,=\,
 \mathrm{S}_n^{\,\sigma, \,g_{\sqrt{-1}\lambda}}(\dot{\gamma}(x))\,,
 $$
 and for $\lambda=\frac{T}{2}$,
 \begin{equation}\notag
\mathrm{R}^{\,\sigma,\,g_{\sqrt{-1}\frac{T}{2}}}
({\dot{\gamma}(x)})=\,
\operatorname{Rank}
\left[
\frac{{\mathrm{S}}^{\,\sigma, \,g_{\sqrt{-1}\frac{T}{2}}}_{i+j-1}
(\dot{\gamma}(x))}{(i+j-1)!}
\right]_{\,i,\,j\,=\,1}^{\infty}\,
=\;\,
\operatorname{Rank}
\left[
\frac{F^{(i+j-1)}_{x,\frac{T}{2}}(0)}{(i+j-1)!}
\right]_{\,i,\,j\,=\,1}^{\infty}\,.
\end{equation}
 Thus, by
Proposition \ref{PROP-RAT-RANK} and Proposition
  \ref{PROP-RANK-GF}
  used exactly as in the previous section,
$$\mathrm{R}^{\,\sigma,\,g_{\sqrt{-1}\frac{L}{2}}}
({\dot{\gamma}(t)})
=\mathrm{R}^{\,\sigma,\,g_{\sqrt{-1}\frac{T}{2}}}
({\dot{\gamma}(x)})= N$$
for all $t\,\in \mathbb{R}$.
\end{proof}

\begin{Cor}\label{COR-RANK-BD-CLOSED}
Let $M$ be a two-dimensional closed real analytic Riemannian manifold with all geodesics closed of length $L$ and with  adapted complex structure
on the entire $TM$.  Then for any unit speed geodesic $\gamma$ and
for all $
\mathrm{t}\in \mathbb{R}$
\begin{equation}\label{EQ-RANK-BD}
\operatorname{Rank}
\left[
\frac{{\mathrm{S}}^{\,\sigma, \,g_{\sqrt{-1}\frac{L}{2}}}_{i+j-1}
(\dot{\gamma}(t))}{(i+j-1)!}
\right]_{\,i,\,j\,=\,1}^{\infty}\,\leq\,\frac{\,L}{\pi}\max_{x\in M}\sqrt{\sigma}\,.
\end{equation}
\end{Cor}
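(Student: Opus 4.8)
The plan is to deduce the bound directly from Proposition \ref{PROP-PJF} by controlling the single integer $N$ to which the rank is equal. First I would verify that the hypotheses of Proposition \ref{PROP-PJF} apply with $T=L$. Since every geodesic of $M$ is closed of length $L$, the unit-speed geodesic flow $\phi_t$ on $UM$ satisfies $\phi_L=\mathrm{id}$; differentiating, its linearization at time $L$ is the identity on $T(UM)$, so every normal Jacobi field along $\gamma$ is periodic with period $L$. Proposition \ref{PROP-PJF} then furnishes an integer $N\geq 1$, independent of $t$, with $\mathrm{R}^{\,\sigma,\,g_{\sqrt{-1}\frac{L}{2}}}(\dot{\gamma}(t))=N$. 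Hence the rank on the left of (\ref{EQ-RANK-BD}) equals $N$, and it remains only to prove $N\leq \frac{L}{\pi}\max_{x\in M}\sqrt{\sigma}$.

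Next I would identify $N$ as the number of zeros of $f_{x,1}$ per period. Recall from the proof of Proposition \ref{PROP-PJF} that, for fixed $x$, the zero set of the fundamental solution $f_{x,1}$ of $f''+\sigma(\gamma(t))\,f=0$ is $L$-periodic and consists of exactly $N$ points in each half-open interval of length $L$. Listing all zeros of $f_{x,1}$ on $\mathbb{R}$ as a strictly increasing bi-infinite sequence $\cdots<b_{-1}<b_0<b_1<\cdots$, the periodicity yields $b_{j+N}=b_j+L$ for every $j$.

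The core step is a Sturm oscillation estimate for the gaps. Put $K=\max_{x\in M}\sigma$; since $N\geq 1$ the solution $f_{x,1}$ genuinely oscillates, which forces $K>0$, and then $\max_{x\in M}\sqrt{\sigma}=\sqrt{K}$. Because $\sigma(\gamma(t))\leq K$ for all $t$, the standard Sturm comparison of $f''+\sigma(\gamma(t))\,f=0$ with the constant-coefficient equation $g''+K\,g=0$ shows that consecutive zeros of $f_{x,1}$ are at least $\pi/\sqrt{K}$ apart. Summing the $N$ gaps across one full period gives
\begin{equation}\notag
L=b_{j+N}-b_j=\sum_{i=0}^{N-1}\left(b_{j+i+1}-b_{j+i}\right)\geq N\,\frac{\pi}{\sqrt{K}},
\end{equation}
so that $N\leq \frac{L}{\pi}\sqrt{K}=\frac{L}{\pi}\max_{x\in M}\sqrt{\sigma}$, as claimed.

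The point I expect to require the most care is the removal of the additive constant that a naive interval count would produce: bounding the spread of $N$ zeros inside a single interval of length $L$ only yields $N<1+\frac{L}{\pi}\sqrt{K}$. It is precisely the $L$-periodicity of the zero set — equivalently, averaging the gap bound over a complete period via $b_{j+N}=b_j+L$ — that upgrades this to the sharp estimate. I would also make the direction of the Sturm inequality explicit, namely that the larger potential $K$ forces denser zeros, so that $\sigma(\gamma(t))\leq K$ gives gaps no smaller than the spacing $\pi/\sqrt{K}$ of the comparison equation, ensuring the comparison is applied in the correct sense.
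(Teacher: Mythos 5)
Your proof is correct and follows essentially the same route as the paper's: apply Proposition \ref{PROP-PJF} with $T=L$ to identify the rank with the number $N$ of zeros of $f_{x,1}$ per period, bound consecutive zero gaps from below by $\pi/\sqrt{\max\sigma}$ via Sturm comparison, and conclude $N\leq L\sqrt{\max\sigma}/\pi$. Your gap-summing over a full period using $b_{j+N}=b_j+L$ is simply a careful justification of the paper's terse step ``$N\leq L/d$,'' sharpening away the additive constant a single-interval count would give.
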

\begin{proof}
By the standard Sturm comparison Theorem, the distance $d$ between zeros of a normal Jacobi field along any unit speed geodesic $\gamma$
  satisfies $$d\geq\frac{\pi}{\sup_{t\in \mathbb{R}}\sqrt{\sigma(\gamma(t))}}\,.$$

In the present situation, the normal  Jacobi fields along any unit speed geodesic are all periodic, and so will be their quotients,  with period $L$. Thus, with the notation as in the proof of Proposition \ref{PROP-PJF} we must have $N\,\leq L/d$\,, and (\ref{EQ-RANK-BD}) follows.
\end{proof}

\begin{Rem}
Thus in a situation as in the Corollary \ref{COR-RANK-BD-CLOSED} we have the following canonical maps. Let $$N=\lfloor \frac{\,L}{\pi}\max_{x\in M}\sqrt{\sigma}\rfloor\,.$$
For any $k\geq 1$ the map
$ \displaystyle TM\to \mathbb{C}^{2(N+k)-1}$
$$\mathrm{v}\in TM\to
\left(Z_1(\mathrm{v}),\,\cdots,\,Z_{2(N+k)-1}(\mathrm{v})\right) \in
\mathbb{C}^{2(N+k)-1}$$
with
\begin{equation}\label{FLA-ZI}
Z_i(\mathrm{v}):=\frac{1}{k!}\mathrm{S}^{\,\sigma,\,g_{\sqrt{-1}
 \frac{L}{2}}}_{k}(\mathrm{v})\,,\quad i\,=\,1,\,\cdots,\,2(N+k)-1\,,
 \end{equation}
sends $TM$ to $$\{(Z_1,\, \cdots,\, Z_{2(N+k)-1})\in \mathbb{C}^{2(N+k)-1}|
\det \big{[}Z_{i+j-1}\big{]}_{\,i,\,j\,=\,1}^{N+k}=0\}\,.$$
\end{Rem}
The map is determined by its restriction to the unit tangent bundle $UM$ since by the homogeneity property of the Schwarzians as defined,
\begin{equation}\label{RESC}
\displaystyle Z_i(a\mathrm{v})=a^{i-1}Z_i(\mathrm{v})\,.
\end{equation}
\begin{Rem}
In case of $M$ is the  $2$-sphere with constant curvature $1$
the period $T$ in (\ref{EQ-hx-PERIOD}) can be taken to be
$\pi$ rather than $2\pi$, obviously for all unit-speed geodesics.

 It follows that (\ref{EQ-RANK-BD}) is also valid
  if we use instead of the value $L=2\pi$ we use  $L=\pi$.
  This of course is consistent with that (\ref{EQ-RANK-BD}) applies to projective space as well. With this value of $L$  the rank of the infinite matrix of the indicated Schwarzians is 1, corresponding to the fact that the entries, for $\|\mathrm{v}\|=1$, are the McLaurin coefficients of the right-hand side of (\ref{TAN-COMP}).

 In this simple case the functions
 $(\ref{FLA-ZI})$ are constant along the unit tangent bundle and satisfy (\ref{RESC}).
 Thus, since $Z_1\equiv 1$,  the image of $TM$ by the map
 determined by the $Z_i$ above, for any integer $k>1$, is just one point in a weighted projective space.
\end{Rem}

\section{Some computations}\label{SEC-Computations}
\subsection{A few Schwarzians ${\mathcal{S}}^{\,\vartheta}_n$ in $\mathbb{R}$ in terms of curvature $\vartheta$}\
\vskip.2cm

  We get the first few Schwarzians in $\mathbb{R}$,
  (besides ${\mathcal{S}}^{\,\vartheta}_1\,=\,1$, and $
{\mathcal{S}}^{\,\vartheta}_2\,=\,0$)
  using the method in Proposition \ref{Schwarzians-in-derivatives},
that we will need for some of the examples that follow.

We consider $x\,=\,0$ and  $f_1=f_1(t)$ and $f_2=f_2(t)$ as in (\ref{IC}).
Differentiate twice the identity  $f_1\,h\,=\,f_2$
to get, using (\ref{EQ}),
\begin{equation}\label{D}
0\,=\,2\,f_1^\prime \,h^\prime+f_1\, h^{\prime\prime}.
\end{equation}

Differentiate (\ref{D}), use (\ref{DEQA}) and  get
\begin{eqnarray}\label{D1}
0
&=&\left(-2\,\vartheta  h^\prime+ h^{\prime\prime\prime}\right)\,f_1+
3 \,f_1^\prime \,h^{\prime\prime}\,;
\end{eqnarray}
evaluation at $x=0$ and  (\ref{IC}) gives
$
h^{\prime\prime\prime}(0)\,=\,2\,\vartheta(0)
\Rightarrow
\fbox{$\displaystyle
     {\mathcal{S}}^{\,\vartheta}_3\,=\,2\,\vartheta
     $}\,.
$

Differentiate (\ref{D1}); by (\ref{EQ}),
\begin{eqnarray}\label{D2}
0
&=&\left(-2\,\vartheta^\prime \,h^\prime -5\,\vartheta \,h^{\prime\prime} + h^{(4)}
\right) \,f_1 +
\left(-2\,\vartheta \, h^\prime+4
 h^{\prime\prime\prime}\right)\, f_1^\prime;
\end{eqnarray}
set  $x\,=\,0$; by (\ref{IC})
$
h^{(4)}(0)\,=\,2 \,\vartheta^\prime(0)
\Rightarrow
\fbox{$\displaystyle
{\mathcal{S}}^{\,\vartheta}_4\,=\,2 \, \vartheta^\prime
$}\;.
$

Differentiate (\ref{D2}), use (\ref{EQ}) and set $x\,=\,0$; by  (\ref{IC}) and  $h^{\prime\prime\prime}(0)\,=\,2\,\vartheta(0)$ obtained earlier, get
$
h^{(5)}(0)\,=\,2\, \vartheta^{\prime\prime}(0)+16\, \vartheta^2(0)
\Rightarrow
\fbox{$\displaystyle
{\mathcal{S}}^{\,\vartheta}_{5}\,=\,2\,
\vartheta^{\prime\prime}+16 \, \vartheta^2$}\;.
$
\vskip.3cm
In a similar fashion
we get\,

\centerline{$\fbox{$\displaystyle
  {\mathcal{S}}^{\,\vartheta}_{6}\,=\,2\,\vartheta^{\prime\prime\prime}
+52\,\vartheta\, \vartheta^\prime
   $}$ }

   and

   \centerline{\,$\fbox{$\displaystyle
    {\mathcal{S}}^{\,\vartheta}_{7}\,= \,2\,\vartheta^{(4)}
         +76\, \vartheta^{\prime\prime} \,\vartheta
         +52\, \left(\vartheta^\prime\right)^2
       +272\, \vartheta^3
        $}$\,}.
\subsection{The Schwarzians
 $\mathcal{S}^{\,\vartheta,\,g_R}_{n}$
 for $g_R(z)\,=\,-\frac{R}{\pi}\ln{(1-\frac{\pi\,z}{R})}$}\
 \vskip.2cm
We know make a choice of $g_R$ and compute
$\mathcal{S}^{\,\vartheta,\,g_R}_{n}$
in terms of the $\mathcal{S}^{\,\vartheta}_{i}$\,.
\begin{Prop}\label{PROP-GR-LN-in-R-}
For and given $R\in (0,\,\infty]$ consider
\begin{equation}\label{GR-PICK-LN}
        \fbox{$\displaystyle
        g_R(z)\,=\,-\frac{R}{\pi}\ln{(1-\frac{\pi\,z}{R})}
          $},
\end{equation}
defined
in $\mathbb{C}\,\setminus{[\,\frac{R}{\pi}, \,\infty\,)}$
with $-\pi<\mathrm{arg} (1-\frac{\pi\,z}{R})<\pi$.
 For all
integer $n\,\geq 1$ the $g_R$-Schwarzians in $\mathbb{R}$ are
\begin{equation}\label{GNKNEWTON}\fbox{$\displaystyle
{\mathcal{S}}^{\,\vartheta, \,g_R}_{n}
\,=\,
\sum_{k=1}^n
 \; \left[\left[\begin{array}{c}n\\k\end{array}\right]\right]\,\,
 \left({\frac{\,\pi}{R}}\right)^{n-k} \;{\mathcal{S}}^{\,\vartheta}_k$}\,
\end{equation}
where\footnote{
$\left[\left[\begin{array}{c}n\\k\end{array}\right]\right]$
is
the absolute value of what is commonly known as the
\emph{$(n,k)$-Stirling number of the second kind}
\cite{ABRAMOWITZ}}
 $
\left[\left[\begin{array}{c}n\\k\end{array}\right]\right]\,=\,
\textrm{ coefficient of  } u^k \textrm{ in }
\prod_{s\,=\,0}^{n-1}(u+s)$. Moreover,

\centerline{\fbox{The expressions reduce to
${\mathcal{S}}^{\,\vartheta}_n$ for $R\,=\,\infty$}}\,.
\end{Prop}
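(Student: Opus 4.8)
The plan is to reduce the whole statement to one generating-function computation at a fixed base point $x$, using the normal-coordinate description of the Schwarzians from Proposition \ref{H-SCHWARZ}. Write $a:=\pi/R$ and set $\psi(s):=(h_x\circ T_x)(s)=h_x(x+s)$. Proposition \ref{H-SCHWARZ} gives $\psi^{(k)}(0)=h_x^{(k)}(x)=\mathcal{S}^{\,\vartheta}_k(x)$ and $\mathcal{S}^{\,\vartheta,\,g_R}_n(x)=(\psi\circ g_R)^{(n)}(0)$, and since $h_x(x)=0$ the germ of $\psi$ at the origin has no constant term, so
\[
\psi(s)=\sum_{k\ge 1}\frac{\mathcal{S}^{\,\vartheta}_k(x)}{k!}\,s^k .
\]
Thus, exactly in the spirit of the linear-combination formula (\ref{Lin-Comb}) of Remark \ref{R-Schwarz}, it suffices to expand the composite $\psi\circ g_R$ as a power series in $t$ and read off the coefficient of $t^n/n!$; the whole task is to identify, for this particular $g_R$, the coefficient multiplying each $\mathcal{S}^{\,\vartheta}_k(x)$.

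First I would record the expansion of $g_R$. From $g_R(t)=-\tfrac1a\ln(1-at)$ one has $g_R(0)=0$ and $a\,g_R(t)=-\ln(1-at)$, hence $\bigl(g_R(t)\bigr)^k=a^{-k}\bigl(-\ln(1-at)\bigr)^k$. The crucial input is the classical generating identity for the numbers $\left[\left[\begin{array}{c}n\\k\end{array}\right]\right]$, defined as the coefficient of $u^k$ in $\prod_{s=0}^{n-1}(u+s)$. I would obtain it by expanding Newton's binomial series two ways:
\[
(1-w)^{-u}=\sum_{n\ge 0}\frac{\prod_{s=0}^{n-1}(u+s)}{n!}\,w^n
\qquad\text{and}\qquad
(1-w)^{-u}=e^{-u\ln(1-w)}=\sum_{k\ge 0}\frac{\bigl(-\ln(1-w)\bigr)^k}{k!}\,u^k ,
\]
and comparing coefficients of $u^k$, which yields
\[
\frac{\bigl(-\ln(1-w)\bigr)^k}{k!}=\sum_{n\ge k}\left[\left[\begin{array}{c}n\\k\end{array}\right]\right]\frac{w^n}{n!}.
\]
Setting $w=at$ gives $\tfrac1{k!}\bigl(g_R(t)\bigr)^k=\sum_{n\ge k}\left[\left[\begin{array}{c}n\\k\end{array}\right]\right]a^{\,n-k}\,t^n/n!$.

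Assembling these, $\psi(g_R(t))=\sum_{k\ge 1}\mathcal{S}^{\,\vartheta}_k(x)\,\tfrac1{k!}\bigl(g_R(t)\bigr)^k=\sum_{n\ge 1}\Bigl(\sum_{k=1}^n\left[\left[\begin{array}{c}n\\k\end{array}\right]\right]a^{\,n-k}\mathcal{S}^{\,\vartheta}_k(x)\Bigr)t^n/n!$, and extracting $\mathcal{S}^{\,\vartheta,\,g_R}_n(x)=(\psi\circ g_R)^{(n)}(0)$ produces precisely (\ref{GNKNEWTON}) with $a=\pi/R$; in particular this identifies the coefficients $g_{R,\,n,\,k}$ of Remark \ref{R-Schwarz}. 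As a sanity check the rows $\prod_{s=0}^{n-1}(u+s)$ for $n=1,2,3$ give the values $1$; $1,1$; and $2,3,1$, which reproduce $\mathcal{S}^{\,\vartheta,\,g_R}_1=1$, $\mathcal{S}^{\,\vartheta,\,g_R}_2=\pi/R$, and $\mathcal{S}^{\,\vartheta,\,g_R}_3=\mathcal{S}^{\,\vartheta}_3+2\pi^2/R^2$ of (\ref{INTRO-First-R-Schwarzians}). For the final assertion I would then let $R\to\infty$, i.e. set $a=\pi/R=0$ in (\ref{GNKNEWTON}): since $\left[\left[\begin{array}{c}n\\n\end{array}\right]\right]=1$ (the leading coefficient of the monic polynomial $\prod_{s=0}^{n-1}(u+s)$) and $a^{\,n-k}$ vanishes for every $k<n$ while equaling $1$ for $k=n$, only the top term survives and $\mathcal{S}^{\,\vartheta,\,g_R}_n\to\mathcal{S}^{\,\vartheta}_n$, consistent with (\ref{LIM-S}) of Remark \ref{LAMBDA-FOR-R-INFTY}.

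The only genuine obstacle is the middle step: establishing the generating identity that reconciles the rising-factorial definition of $\left[\left[\begin{array}{c}n\\k\end{array}\right]\right]$ with the $k$-th power of $-\ln(1-w)$. Once that bridge between the two standard descriptions of these coefficients is in place, everything else is routine bookkeeping of absolutely convergent power series near $t=0$.
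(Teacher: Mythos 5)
Your proof is correct and is essentially the paper's own argument: the paper likewise reduces, via Proposition \ref{H-SCHWARZ}, to identifying the coefficients $g_{R,\,n,\,k}$ and extracts them by computing the $n$-th $t$-derivative at $0$ of $r_x(u,\,x+g_R(t))\,=\,\left(1-\tfrac{\pi t}{R}\right)^{-u}$ in two ways, which is precisely your double expansion of $(1-w)^{-u}$ (Newton's binomial series versus $e^{-u\ln(1-w)}$) compared in powers of $u$, merely organized derivative-by-derivative rather than as one generating identity, and with the same treatment of the $R=\infty$ case through $\left[\left[\begin{array}{c}n\\n\end{array}\right]\right]=1$. One small caution: since $\vartheta$ is only assumed smooth, the series $\sum_{k\geq 1}\mathcal{S}^{\,\vartheta}_k(x)\,s^k/k!$ need not converge, so your ``absolutely convergent'' bookkeeping should be read as an identity of formal jets --- legitimate here because both sides of (\ref{GNKNEWTON}) depend only on the $n$-jet of $h_x$ at $x$ and $g_R$ is analytic with $g_R(0)=0$.
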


\begin{proof}
That the expressions give ${\mathcal{S}}^{\,\sigma}_n$ for $R=\infty$
follows by construction, but can be verified directly since
$\left[\left[\begin{array}{c}n\\n\end{array}\right]\right]\,=\,1$.
Now, let $x \in \mathbb{R}$ arbitrary and fixed, and consider
\begin{equation}\notag
(T_x\circ{{g}_{R}})(t)\,=\,x-\frac{R}{\pi}\,\ln {(1-\frac{\pi\,t}{R})},
\end{equation}
which is interpreted $x+t$ for $R\,=\,\infty$\,.

Let $h_{x}$
 be the quotient of the pair of fundamental solutions associated to
 $x$ as in Proposition
 \ref{H-SCHWARZ}.
Then
\begin{eqnarray}\label{COMP-DER}
\;\;\;\;\;\;
{\mathcal{S}}^{\,\vartheta ,\,g_R}_n(x) \;\,
 &\overset{(i)}{=}&
\frac{\partial^{\,n}   }{\partial \,t^n}\,|_{t=0}\,
V \left( x,\, x+g_R(t)\right)\\\notag\\\notag
 &\overset{(ii)}{=}&
\left(h_{x}{\circ}\,T_{x}\,{\circ}\,{g}_{R}\right)^{(n)}(0)
\\\notag\\\notag
&\,=\,&
\sum_{k\,=\,1}^n
h_{x}^{(k)}(x) \; {g_R}_{n,k} \\\notag
&\overset{(iii)}{=}&
\sum_{k\,=\,1}^{n}
{\mathcal{S}}^{\,\vartheta}_k(x) \; {g_R}_{n,k}\,,
\end{eqnarray}
where (i) holds by
 (\ref{VH-SR}) and  (ii) by (\ref{VH});
the coefficients ${g_R}_{n,k}$
depend  on the derivatives  of order $1$ or higher
of ${g_R}$ at $0$ and are computed below; in (iii)
we use  Proposition \ref{H-SCHWARZ} by which
$h_{x}^{(n)}(x)\,
=\,\mathcal{S}^{\,\sigma}_{n}(x)$, for all integer $n\,\geq \,1$.

To find ${g_R}_{n,k}$, avoiding the Fa\`{a} di Bruno formula,
consider the function of $u$ and $v$,
$$r_{x}(u, v)\,=\,e^{\,u\frac{\pi}{R}(v-x)}\,,$$
and compute  the partial derivative with respect to $t$
 of order $n\,\geq\,1$ at $t=0$
 of
 $$r_{x}\left(u, x+{\,{g}_R} (t) \right)\,$$
 in two different ways.
 On the one hand that partial derivative
  is given in terms of the ${g_R}_{n,k}$ by
\begin{eqnarray}\label{D-C-1}
\sum_{k\,=\,1}^{n}
u^k\,\left(\frac{\pi}{R}\right)^k
\, {g_R}_{n,k}\,,
\end{eqnarray}
and
on the other hand, since
 $$r_{x}\left(u, \,x+{\,{g}_R} (t) \right)\,=\,
 \left(1-\frac{\pi\,t}{R}\right)^{-u},$$
that derivative computed directly gives
\begin{equation}\label{D-C-2}
\left(-\frac{\pi}{R}\right)^n \,\prod_{s\,=\,0}^{n-1}
(-u-s)
\,=\,\left(\frac{\pi}{R}\right)^n\,
\prod_{s\,=\,0}^{n-1}
(u+s)\,.
\end{equation}
Now the coefficient of $u^k$ in (\ref{D-C-1}) equals
the coefficient of $u^k$ in (\ref{D-C-2})
which
equals $\left(\frac{\pi}{R}\right)^{n}$ times the coefficient of $u^k$ in
$\prod_{s\,=\,0}^{n-1}
\left(u+s\right)$. Thus
$${g_R}_{n,k}\,=
\,\left[\left[\begin{array}{c}n\\k\end{array}\right]\right]\,
\left(\frac{\pi}{R}\right)^{n-k}\,.
$$
\end{proof}
\begin{Cor}\label{COR-GR-LN-COMP}
For the choice of $g_R$ given by (\ref{GR-PICK-LN}) the corresponding $g_R$-Schwarzian functions in $TM$ are given for $\mathrm{v}\in TM$
\begin{equation}\label{GNKNEWTON}\fbox{$\displaystyle
{\mathrm{S}}^{\,\sigma, \,g_R}_{n}(\mathrm{v})
\;=\;
\sum_{k=1}^n
 \; \left[\left[\begin{array}{c}n\\k\end{array}\right]\right]\,\,
 \left({\frac{\,\pi\,\|\mathrm{v}\|}{R}}\right)^{n-k}
 \;{\mathrm{S}}^{\,\sigma}_k(\mathrm{v})$}\,
\end{equation}
where ${\mathrm{S}}^{\,\sigma}_n(\mathrm{v})$ is given for $v\in TM$ by
(\ref{DEF-HOS-TM-PIN}).
\end{Cor}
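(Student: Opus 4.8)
The plan is to reduce the statement to a single substitution, since all the computational content already resides in Proposition \ref{PROP-GR-LN-in-R-}. First I would recall that Definition \ref{DEF-S-R-in-TM}, through formula (\ref{Lin-Comb-TM}), expresses the tangent-bundle Schwarzian as the linear combination
\[
\mathrm{S}^{\,\sigma,\,g_R}_{n}(\mathrm{v})\,=\,\sum_{k=1}^n g_{R,\,n,\,k}\,\|\mathrm{v}\|^{n-k}\,\mathrm{S}^{\,\sigma}_k(\mathrm{v}),
\]
where the coefficients $g_{R,\,n,\,k}$ are the \emph{universal} Fa\`a di Bruno coefficients (\ref{GNK}), depending only on the derivatives $(g_R)^{(i)}(0)$ and not on $\mathrm{v}$ nor on $\sigma$.

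The second step is to invoke Proposition \ref{PROP-GR-LN-in-R-}, in whose proof these same coefficients were evaluated for the specific choice (\ref{GR-PICK-LN}); there it is shown that
\[
g_{R,\,n,\,k}\,=\,\left[\left[\begin{array}{c}n\\k\end{array}\right]\right]\left(\frac{\pi}{R}\right)^{n-k}.
\]
Substituting this value into the displayed linear combination and absorbing the factor $(\pi/R)^{n-k}$ together with $\|\mathrm{v}\|^{n-k}$ into the single power $(\pi\|\mathrm{v}\|/R)^{n-k}$ yields exactly the claimed formula (\ref{GNKNEWTON}).

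The only point requiring a word of justification --- and the closest thing to an obstacle --- is the identification of the coefficients $g_{R,\,n,\,k}$ occurring in (\ref{Lin-Comb-TM}) with those computed on $\mathbb{R}$ in Proposition \ref{PROP-GR-LN-in-R-}. This holds by construction: both sets of coefficients are defined by the same expression (\ref{GNK}), which is intrinsic to the map $g_R$ and indifferent to whether the underlying Schwarzians live on $\mathbb{R}$ or in $TM$. The extra powers $\|\mathrm{v}\|^{n-k}$ in the tangent-bundle version are precisely the homogeneity weights forced by the substitution (\ref{SUBSTITUTE}) and recorded in Proposition \ref{HAT-SIGMA-HOMOGENEOUS}, so that no computation beyond Proposition \ref{PROP-GR-LN-in-R-} is needed to conclude.
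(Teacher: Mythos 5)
Your proposal is correct and follows exactly the route the paper takes: its proof of Corollary \ref{COR-GR-LN-COMP} is simply ``Apply Definition \ref{DEF-S-R-in-TM},'' i.e.\ substitute the coefficients $g_{R,\,n,\,k}=\left[\left[\begin{array}{c}n\\k\end{array}\right]\right]\left(\frac{\pi}{R}\right)^{n-k}$ computed in Proposition \ref{PROP-GR-LN-in-R-} into the linear combination (\ref{Lin-Comb-TM}) and merge the powers of $\pi/R$ and $\|\mathrm{v}\|$. Your extra remark that the coefficients are intrinsic to $g_R$ via (\ref{GNK}) and independent of whether the Schwarzians live on $\mathbb{R}$ or $TM$ is a fair (and correct) elaboration of what the paper leaves implicit.
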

\begin{proof}
Apply Definition \ref{DEF-S-R-in-TM}.
\end{proof}

\subsubsection{A few $g_R$-Schwarzians ${\mathcal{S}}^{\,\vartheta, \,g_R}_n$
for $g_R=-\frac{R}{\pi}\,\ln {(1-\frac{\pi\,t}{R})}
$ in terms of $\vartheta$ and $R$}

\begin{equation}\label{First-R-Schwarzians}{\mathcal{S}}^{\,\vartheta, \,g_R}_1
\,=\,1\,,\;\;\;
{{\mathcal{S}}^{\,\vartheta, \,g_R}_2}\,=\,
\frac{\pi}{R}\,,
\;\;\;{\mathcal{S}}^{\,\vartheta, \,g_R}_3\,=
\,{\mathcal{S}}^{\,\vartheta}_3\,+\, \frac{2\,\pi^2}{R^{2}}
\,=\,
2\,\vartheta\,+\, \frac{2\,\pi^2}{R^{2}}\,.
\end{equation}
Note that one recovers (\ref{LSZIR}) from
\begin{equation}\label{REM-D2-R}
          {\mathcal{D}}^{\,\vartheta, \,g_R}_2
\,=\,\det \left[  \begin{array}{cr}
          1&\frac{\pi}{2R}\\\\
           \frac{\pi}{2R}&\;\;
            \frac{\pi^2}{3\,R^2}
           +\frac{\,\vartheta }{3}
            \end{array}
\right]\,\geq\,0\,.
\end{equation}

In higher order,
\begin{eqnarray}\notag
{\mathcal{S}}^{\,\vartheta, \,g_R}_4\,&=&\,
{\mathcal{S}}^{\,\vartheta}_4+\,
{{6 \,{\mathcal{S}}^{\,\vartheta}_3\,}\frac{\pi}{R}}+\,6\,
\frac{\pi^{3}}{{R}^{3}}\\\notag
\,&=&\,2\,\vartheta^\prime+
12\,\vartheta\,\frac{\pi }{R} +\,6\,\frac{\pi ^{3}}{{R}^{3}},
\end{eqnarray}
and
\begin{eqnarray}{\mathcal{S}}^{\,\vartheta, \,g_R}_5\,&=&\,
{\mathcal{S}}^{\,\vartheta}_5+{10\,\mathcal{S}}^{\,\vartheta}_4\,\frac{\pi }{R}
+35\,{\mathcal{S}}^{\,\vartheta}_3\,\frac{\pi^2}{R^{2}}
+24\,\frac{\pi^4}{R^{4}}\\\notag
\,&=&\,
2\, \vartheta^{\prime\prime}+16 \, \vartheta^2
+20
\,\vartheta^\prime
\,\frac{\pi }{R}+70\,\vartheta\,\frac{\pi^2}{R^{2}}
+4\,\frac{\pi^4}{R^{4}}\,.
\end{eqnarray}
The corresponding determinantal condition in ${\mathcal{D}}^{\,\vartheta, \,g_R}_3\,\geq\,0$ on  $\mathbb{R}$, is explicitly computed,
\begin{equation}\label{REM-D3-R}
             \frac{\vartheta^3}{135}\,
             -\frac{\left(\vartheta^{\prime}\right)^2}{144}\,
             +\frac{\,\vartheta\,\vartheta^{\prime\prime}}{180}
             +\frac{(\vartheta^{\prime\prime}+
             8\,\vartheta^2)
              \,\pi^2}{720\,R^2}
             +\,\frac{\vartheta
             \,\pi^4}{240\,R^4}+\,
              \frac{\pi^6}{2160\,R^6}\,\geq\,0.
\end{equation}
We use this in the next result.

\begin{Prop}\label{PROP-SEC-OR-M}
 A  condition of second order in the Gauss curvature $\sigma$ necessary for the existence of the adapted complex structure up to
 radius $R$ is\footnote{ The weaker
diagonal condition
${\mathrm{S}}^{\,\sigma, \,g_R}_3\,\geq \,0$ is also valid and yields
$$
 \Delta\sigma +16 \, \sigma^2\,
+\,70\,\sigma\,\frac{\pi^2}{R^{2}}
+\,24\,\frac{\pi^4}{R^{4}}\,\geq\,0\,,$$ which in the limit $R\,=\,\infty$
reduces to  Sz\H{o}ke's inequality (\ref{SZI}).}
\begin{eqnarray}\label{INE-3-PROP}
  32\,\sigma^3
             +6\,\Delta{(\sigma^2)}
             +\left(3\,\Delta{\sigma} +
             48\,\sigma^2\right)
             \frac{\pi^2}{R^2}
             +18\,\sigma\,
             \frac{\pi^4}{R^4}+\,2\,
             \frac{\pi^6}{R^6}\,\geq\,27\,\|
             \operatorname{Grad}{\sigma}\|^2\,,&
             \end{eqnarray}
 \end{Prop}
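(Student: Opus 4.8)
The plan is to derive (\ref{INE-3-PROP}) from the pointwise determinantal inequality (\ref{REM-D3-R}), which Theorem \ref{TH-EXTEND-TUBE-FINITE} guarantees holds at every $\mathrm{v}\in UM$ once the adapted structure exists up to radius $R$. The bridge between the two is the correspondence $\vartheta\leftrightarrow\sigma\circ\gamma$ together with the substitution $\vartheta^{(k)}\leftrightarrow\nabla^k_{\dot\gamma}\sigma$ from the proof of Theorem \ref{TH-EXTEND-TUBE-FINITE}, so that each $\vartheta$-monomial in (\ref{REM-D3-R}) becomes a function on $UM$ built from $\sigma$ and its covariant derivatives along the geodesic spray $X_1$. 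Thus first I would rewrite (\ref{REM-D3-R}) as a pointwise inequality on the unit tangent bundle, reading $\vartheta\mapsto\sigma$, $\vartheta'\mapsto X_1\sigma=\nabla_{\dot\gamma}\sigma$ and $\vartheta''\mapsto X_1^2\sigma=\nabla^2_{\dot\gamma}\sigma$, obtaining the fiberwise statement
\begin{equation}\notag
\frac{\sigma^3}{135}-\frac{(\nabla_{\dot\gamma}\sigma)^2}{144}
+\frac{\sigma\,\nabla^2_{\dot\gamma}\sigma}{180}
+\frac{(\nabla^2_{\dot\gamma}\sigma+8\sigma^2)\pi^2}{720\,R^2}
+\frac{\sigma\,\pi^4}{240\,R^4}+\frac{\pi^6}{2160\,R^6}\,\geq\,0,
\end{equation}
valid for every unit vector $\mathrm{v}$ in every fiber $U_pM\cong S^1$.

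The second and central step is the promised \emph{integration along each fiber} of $UM$. Fix $p\in M$ and integrate the displayed inequality over the unit circle $U_pM$ with respect to the angle $\psi$. The terms involving $\sigma$ and $\sigma^2$ pull out of the angular integral (they are fiber-constant, being pullbacks from $M$), contributing the $32\sigma^3$, the $48\sigma^2$ and the curvature-only pieces after clearing the common denominator. The work is in the two terms $-(\nabla_{\dot\gamma}\sigma)^2$ and $\sigma\,\nabla^2_{\dot\gamma}\sigma$: writing $\nabla_{\dot\gamma}\sigma=\cos\psi\,e_1\sigma+\sin\psi\,e_2\sigma$ in an orthonormal frame $\{e_1,e_2\}$ at $p$, the average of $(\nabla_{\dot\gamma}\sigma)^2$ over $\psi$ is $\tfrac12\|\operatorname{Grad}\sigma\|^2$, while the average of $\nabla^2_{\dot\gamma}\sigma$ is $\tfrac12\Delta\sigma$ (the Hessian trace), and hence the average of $\sigma\,\nabla^2_{\dot\gamma}\sigma$ is $\tfrac12\sigma\Delta\sigma$. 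Using $\Delta(\sigma^2)=2\sigma\Delta\sigma+2\|\operatorname{Grad}\sigma\|^2$ lets me trade $\sigma\Delta\sigma$ for $\Delta(\sigma^2)$, which is exactly the form appearing in (\ref{INE-3-PROP}). Collecting the angular averages and multiplying through by the least common multiple of the denominators should reproduce (\ref{INE-3-PROP}) coefficient by coefficient.

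The main obstacle I anticipate is purely bookkeeping: matching the rational coefficients $\{1/135,1/144,1/180,\dots\}$ in (\ref{REM-D3-R}) against the integer coefficients $\{32,6,3,48,18,2,27\}$ in (\ref{INE-3-PROP}) after the angular averaging introduces the factors of $\tfrac12$ on the gradient and Hessian terms and after the normalization $\Delta(\sigma^2)=2\sigma\Delta\sigma+2\|\operatorname{Grad}\sigma\|^2$. I would verify that one global constant (the common denominator times the normalization of $\int_{S^1}d\psi$) simultaneously clears all seven coefficients; if the two averaging identities $\langle\cos^2\psi\rangle=\langle\sin^2\psi\rangle=\tfrac12$ and $\langle\cos\psi\sin\psi\rangle=0$ are applied consistently, no cross term $e_1\sigma\,e_2\sigma$ survives and the gradient term assembles frame-independently into $\|\operatorname{Grad}\sigma\|^2$. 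The only conceptual point worth stating explicitly is that $\nabla^2_{\dot\gamma}\sigma$ is the Hessian evaluated on $(\mathrm{v},\mathrm{v})$, so its fiber average is genuinely $\tfrac12\Delta\sigma$ and not something depending on the frame; once that is recorded, the rest is the routine arithmetic I would relegate to Section \ref{SEC-Computations}.
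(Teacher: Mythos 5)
Your proposal is correct and follows essentially the same route as the paper: the paper transplants (\ref{REM-D3-R}) to $UM$ via Theorem \ref{TH-EXTEND-TUBE-FINITE} (after first applying $\vartheta\vartheta''=\tfrac12(\vartheta^2)''-(\vartheta')^2$ on $\mathbb{R}$, the one-variable form of your identity for $\Delta(\sigma^2)$) and then, instead of integrating over the fiber circle, simply sums the resulting inequality (\ref{EQ-INEQ-3-TM}) over an orthonormal basis $\{\mathrm{v}_1,\mathrm{v}_2\}$ of $T_xM$ --- equivalent to your angular average since every $\mathrm{v}$-dependent term is a quadratic form in $\mathrm{v}$. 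Your bookkeeping does check out: the circle average puts factors of $\tfrac12$ on the Hessian and gradient terms, and clearing denominators by $4320=2\cdot 2160$ reproduces all seven coefficients of (\ref{INE-3-PROP}).
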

 \begin{proof}
 In the expression on $\mathbb{R}$ of
 ${\mathcal{D}}^{\,\vartheta, \,g_R}_3$  given in (\ref{REM-D3-R}) use that
 $$\vartheta\,\vartheta^{\prime\prime}\,
   =\,\frac{1}{2}\,(\vartheta^2)^{\prime\prime}-\,{\vartheta^\prime}^2$$ to
  get
  \begin{eqnarray}\notag
             &&2160\,{\mathcal{D}}^{\,\vartheta, \,R}_3\,=\,
             16\,\vartheta^3
             -27\,{\vartheta^{\prime}}^2
             +6\,(\vartheta^2)^{\prime\prime}
             +\left( 3\,\vartheta^{\prime\prime}\,+
             24\,\vartheta^2\,\right)
             \frac{\pi^2}{R^2}
             +9\,\vartheta\,
             \frac{\pi^4}{R^4}+\,
             \frac{\pi^6}{R^6}\,.
             \end{eqnarray}

            The corresponding function in $TM$ restricted to $UM$
is obtained by replacing derivatives of $\vartheta$ by covariant derivatives
of $\sigma$. So, we have, in light of Theorem
\ref{TH-EXTEND-TUBE-FINITE}, for all $\mathrm{v}\in UM$, the unit tangent bundle,
 \begin{eqnarray}\label{EQ-INEQ-3-TM}
  16\,\sigma^3
             +6\,\nabla_\mathrm{v}^2(\sigma^2)
             +\left(3\,\nabla_\mathrm{v}^2\sigma +
             24\,\sigma^2\right)\,
              \frac{\pi^2}{R^2}
             +9\,\sigma\,
             \frac{\pi^4}{R^4}+\,
              \frac{\pi^6}{R^6} \geq\,27\,(\nabla_\mathrm{v}\sigma)^2&\,.
             \end{eqnarray}

Now, for each $x\in M$, average this inequality over any
 orthonormal basis $\{\mathrm{v_1}, \, \mathrm{v_2}\} $
of $T_xM$;
since  $\nabla_\mathrm{v_1}^2(\sigma^2)+
\nabla_\mathrm{v_2}^2(\sigma^2)\,=\,\Delta{(\sigma^2)}$
and $(\nabla_\mathrm{v_1}\sigma)^2+(\nabla_\mathrm{v_2}\sigma)^2
\,=\,
\|\operatorname{ Grad } \sigma\|^2$ then  (\ref{INE-3-PROP}) follows.
\end{proof}
\begin{Cor}
If  the adapted structure exists up of radius $R$  for  $M$ orientable and closed,
with $\mathrm{dA}$ the Riemannian area element, then
inequality (\ref{INTRO-INTEG}) holds.
\end{Cor}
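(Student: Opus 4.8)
The plan is to obtain (\ref{INTRO-INTEG}) by integrating the pointwise inequality (\ref{INE-3-PROP}) of Proposition \ref{PROP-SEC-OR-M} over the closed surface $M$ against its Riemannian area element $\mathrm{dA}$. Since the adapted structure is assumed to exist up to radius $R$, Proposition \ref{PROP-SEC-OR-M} guarantees that (\ref{INE-3-PROP}) holds at every point of $M$; integrating a pointwise inequality over $M$ with respect to the (positive) area measure preserves its direction, so the strategy is sound at the outset.

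First I would integrate termwise. The purely algebraic terms on the left produce $32\int_M\sigma^3\,\mathrm{dA}$, $48\,(\pi/R)^2\int_M\sigma^2\,\mathrm{dA}$, and $18\,(\pi/R)^4\int_M\sigma\,\mathrm{dA}$, while the constant term $2\,(\pi/R)^6$ integrates to $2\,(\pi/R)^6\,\mathrm{Area}(M)$ upon recording $\int_M\mathrm{dA}=\mathrm{Area}(M)$; the right-hand side becomes $27\int_M\|\operatorname{Grad}\sigma\|^2\,\mathrm{dA}$. The one genuine step is to dispose of the two divergence-type terms $6\,\Delta(\sigma^2)$ and $3\,(\pi^2/R^2)\,\Delta\sigma$. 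On an orientable closed Riemannian manifold the Laplacian of any smooth function is a divergence, $\Delta f=\operatorname{div}(\operatorname{Grad} f)$, so by the divergence theorem, with no boundary contribution since $M$ is compact without boundary, one has $\int_M\Delta f\,\mathrm{dA}=0$. Applying this with $f=\sigma^2$ and with $f=\sigma$ (both smooth, indeed real analytic, because $g$ is) annihilates precisely these two terms.

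After this cancellation, what remains is exactly the claimed inequality (\ref{INTRO-INTEG}), so the corollary follows. I do not anticipate a real obstacle here: the only properties actually invoked are compactness, so that $M$ has empty boundary and all integrals are finite, and orientability, so that the area form and the divergence theorem are available. These are precisely the standing hypotheses of the statement, and I would conclude by remarking that no additional regularity beyond the real analyticity of $g$ (already assumed throughout Section \ref{SECTION-GT}) is needed to justify the termwise integration and the vanishing of the Laplacian integrals.
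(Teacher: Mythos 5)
Your proposal is correct and coincides with the paper's own argument: the paper likewise integrates (\ref{INE-3-PROP}) over the closed orientable $M$ and invokes Stokes's Theorem to annihilate the integrals of $\Delta\sigma$ and $\Delta(\sigma^2)$, leaving exactly (\ref{INTRO-INTEG}). Your write-up merely spells out the termwise bookkeeping that the paper leaves implicit.
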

             \begin{proof}
Since if $M$ is closed, by Stokes's Theorem the integrals of $\Delta\sigma$
and $\Delta{(\sigma^2)}$ vanish.
\end{proof}
\begin{Rem}
It is clear that in (\ref{INE-3-PROP} ) the equality  holds on $\mathbb{R}$ if $\sigma$ is constant
equal to $-\frac{\pi^2}{4\,R^2}$.
\end{Rem}

We display two more Schwarzians,
\begin{eqnarray}\notag
{\mathcal{S}}^{\,\vartheta,\,g_R}_{6}\,&=&\,
{\mathcal{S}}^{\,\vartheta}_{6}
+15\,{\mathcal{S}}^{\,\vartheta}_{5}\,\frac{\pi }{R}
+85\,{\mathcal{S}}^{\,\vartheta}_{4}\,\frac{\pi^2}{R^{2}}
+225\,{\mathcal{S}}^{\,\vartheta}_{3}\,\frac{\pi^3}{R^{3}}
 +120\,\frac{\pi^5}{R^{5}}\,\\\notag
\,&=&\,
      2\,\vartheta^{\prime\prime\prime}
      +52\,\vartheta\, \vartheta^\prime
    +(30\,\vartheta^{\prime\prime}+240\,\vartheta^2)\,\frac{\pi }{R}
     +{170\,\vartheta^\prime}\,\frac{\pi^2}{R^{2}}
   +450\,\vartheta\,\frac{\pi^3}{R^{3}}+120\,\frac{\pi^5}{R^{5}}\,,\notag
\end{eqnarray}
and
\begin{eqnarray}\notag
{\mathcal{S}}^{\,\vartheta, \,g_R}_7\,&=&
{\mathcal{S}}^{\,\vartheta}_{7}\,
+\,21\, {\mathcal{S}}^{\,\vartheta}_{6}\,\frac{\pi }{R}\,+\,175\, {\mathcal{S}}^{\,\vartheta}_{5}
\frac{\pi^2 }{R^{2}}
\,+\,735\,{\mathcal{S}}^{\,\vartheta}_{4}\,\frac{\pi^3 }{R^{3}}
+\,1624\,{\mathcal{S}}^{\,\vartheta}_{3}\,\frac{\pi^4 }{R^{4}}\,+\,720
\,\frac{\pi^6 }{R^{6}}\,,
\\\notag
&=&
2\,\vartheta^{(4)}  +76\,\vartheta\,\vartheta^{(2)}
 +52\, ( \vartheta^{\prime} )^{2}
 +272\,{\vartheta}^{3}+( 42\,\vartheta^{(3)} \,+1092\,\,\vartheta\,
\vartheta^{\prime} )\,\frac{\pi  }{R}\\\notag
&&\qquad\quad +(350\,\vartheta^{(2)} +2800\,{\vartheta}^{2})
\,\frac{\pi^2}{R^{2}} +1470\,\vartheta^{\prime}\,
\,\frac{\pi^3}{R^{3}}
+3248\,\vartheta\,\frac{\pi^4}{R^{4}}+
720\,\frac{\pi^6 }{R^{6}}\,.
\end{eqnarray}

\subsection{A formula for
$\,\mathcal{D}^{\,\vartheta,\,g_R}_{n}$ circumventing the Schwarzians}\
\vskip.2cm
 There is a classical determinantal formula for quadratic forms, that appears in \cite{AKHIEZER-KREIN}, which we will use  to find the functions
$\,\mathcal{D}^{\,\vartheta,\,g_R}_{n}$ involving directly the solutions of the diffential equation rather than the individual Schwarzians.
In fact
for real $x_1$ and $x_2$ with $|x_1|$ and $|x_2|$ small enough from a computation with power series, that uses
$(x_1^n-x_2^n)=(x_1-x_2)\sum_{k=0}^nx_1^{n-k}x_2^k$, and Proposition  \ref{H-SCHWARZ},
  \begin{eqnarray}\label{FLA-H-R-BEZOUTIAN}
   \frac{\left({h_{x}{\circ}T_{x}{\circ }\, g_R}\right)(x_1)\,-\,
  \left({h_{x}{\circ}T_{x}{\circ }\, g_R}\right)(x_2)}{x_1-x_2}
  \,=\,
   \sum_{p,\, q\,=\,0}^\infty\,
   \mathcal{S}^{\,\vartheta,\,g_R}_{i+j+1}(x)\,x_1^p\,x_2^q
   \,,\end{eqnarray}
   and hence we have the following.
\begin{Def}\label{BlockT}
For any smooth $f\,=\,f(t)$ and integers $n\,\geq\,1$ and $m\,\geq \,1$
let
$$\mathbf{T}(f,\,  n)=\mathbf{T}(f,\,  n)\,(t)$$ be the  $n\times 2\,n$ matrix
whose entries are the functions in the variable $t$ given by
\begin{equation}\label{DEFTIJ}
\mathbf{T}(f,\,   n)_{\,i,\,j}\,(t)\,=\,\begin{cases}\;\;
{\displaystyle 0 \; \textrm{ for } i\,<\,j;}\\\\
{\displaystyle \frac{f^{(j-i)}(t)}{(j-i)!}\; \textrm{ for } j\,\geq\,i\,.}
\end{cases}
\end{equation}
\end{Def}
\begin{Prop}\label{DET-HANKEL-MOEBIUS}
 Let $y_1$ and $y_2$ be any pair of independent
solutions of (\ref{DEQA})
(so the constant $W(y_1, y_2)=y_1\,y_2^\prime-y_1^\prime\,y_2\,\neq \,0$). Then
for all integer $n\,\geq\,1$ and all $x\in \mathbb{R}$
 \begin{equation}\label{Det-fla}\fbox{$\displaystyle
 \mathcal{D}^{\,{\vartheta,\,g_R}}_{n}(x)
\,=\,\frac{(-1)^{\frac{n(n+1)}{2}}}{W^n(y_1,\,y_2)}\,
\det{\left[
\begin{array}{cc}
\mathbf{T}(y_{1}{\circ}T_{x}{\circ }\, g_R, \, n)\,(0) \\\\
\mathbf{T}(y_{2}{\circ}T_{x}{\circ }\, g_R, \,  n)\,(0)
\end{array}
\right]}$}\,,
\end{equation}where
the $2n\times2n$ matrix whose
determinant is indicated above is  given as a pair of $n\times 2n$ blocks
each one as in Definition \ref{BlockT}.
\end{Prop}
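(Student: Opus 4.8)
The plan is to read (\ref{Det-fla}) as an instance of the classical identity relating a Hankel (Bezout) determinant to a resultant-type (Sylvester) determinant, with the divided difference of $h_{x}\circ T_{x}\circ g_R$ playing the role of the generating kernel. First I would use Proposition \ref{H-SCHWARZ} together with the expansion (\ref{FLA-H-R-BEZOUTIAN}) to record that the symmetric array $[\,\mathcal{S}^{\,\vartheta,g_R}_{i+j-1}(x)/(i+j-1)!\,]$ is exactly the matrix of two-variable Taylor coefficients of the divided difference $\delta F(x_1,x_2)=\big(F(x_1)-F(x_2)\big)/(x_1-x_2)$ of $F=h_{x}\circ T_{x}\circ g_R$. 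Consequently $\mathcal{D}^{\,\vartheta,g_R}_{n}(x)$ is the $n\times n$ leading minor of that coefficient array, and the whole problem is to express this minor through the solution data.

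I would prove the formula first for the fundamental pair $f_{x,1},f_{x,2}$ (where $W=1$) and then recover the general case. Writing $\phi=T_{x}\circ g_R$ and $u_i=f_{x,i}\circ\phi$, we have $F=u_2/u_1$, so $\delta F = N/\big(u_1(x_1)u_1(x_2)\big)$ where $N(x_1,x_2)=\big(u_2(x_1)u_1(x_2)-u_1(x_1)u_2(x_2)\big)/(x_1-x_2)$ is the Bezoutian of the pair $(u_1,u_2)$. In coefficient-matrix form this reads $[\,N\,]=L\,H\,L^{\mathsf T}$, with $H$ the Hankel array of $\delta F$ and $L$ the lower-triangular Toeplitz matrix of the Taylor coefficients of $u_1$. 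Since $L$ is triangular, leading minors multiply; and $\det L_n=u_1(0)^{n}=f_{x,1}(x)^{n}=1$ because $\phi(0)=x$ and $f_{x,1}(x)=1$. Hence the $n\times n$ Bezoutian minor equals $\mathcal{D}^{\,\vartheta,g_R}_{n}(x)$ exactly, and I have reduced the statement to a determinantal identity for the Bezoutian of a pair.

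The heart of the argument is then the classical determinantal formula of Akhiezer and Krein: the $n\times n$ Bezoutian minor of $(u_1,u_2)$ equals, up to a universal sign, the $2n\times 2n$ determinant of the stacked staircase matrix whose two $n\times 2n$ blocks are $\mathbf{T}(u_1,n)(0)$ and $\mathbf{T}(u_2,n)(0)$. I would establish this either by the Gram/confluent-Vandermonde factorization of the Bezout form or by explicit row and column reduction of the staircase matrix; the global sign $(-1)^{n(n+1)/2}$ is produced by the permutation that disentangles the two interleaved blocks. Finally, to pass from $f_{x,i}$ to an arbitrary independent pair $y_1,y_2$, I note that $(f_{x,1},f_{x,2})^{\mathsf T}=A\,(y_1,y_2)^{\mathsf T}$ with $\det A = W(y_1,y_2)^{-1}$; by linearity of $\mathbf{T}$ in its function argument the stacked matrix for $(f_{x,i}\circ\phi)$ is obtained from that for $(y_i\circ\phi)$ by acting with $A$ on the two $n$-row blocks, so its determinant acquires the factor $(\det A)^{n}=W(y_1,y_2)^{-n}$. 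Distributing this factor yields the $W^{-n}$ normalization in (\ref{Det-fla}).

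The main obstacle is the exact bookkeeping in this classical step. The Bezoutian-to-Sylvester identity is usually stated for polynomials normalized by their \emph{leading} coefficient, whereas here the natural normalization is by the value at the base point, i.e.\ by the low-order Taylor data $u_i(0),u_i'(0),\dots$; so the precise power of the Wronskian and the global sign must be re-derived in this ``Taylor at a point'' setting rather than quoted verbatim. Pinning down the sign and the placement of $W^{n}$ simultaneously — for instance by checking the cases $n=1,2$ against the normalization $\mathcal{S}^{\,\vartheta,g_R}_1\equiv 1$ and the explicit second determinant in (\ref{REM-D2-R}) — is the delicate part; once the Bezoutian reduction and the scaling under $A$ are in place, everything else is linear algebra organized by the triangular and Hankel structure.
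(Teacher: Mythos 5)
Your proposal is correct and shares the paper's outer skeleton, but it routes the central identity differently. Both arguments reduce to the fundamental pair $f_{x,1},f_{x,2}$ and recover the general case from the scaling of the stacked determinant under $(y_1,y_2)\mapsto(af_1+bf_2,\,cf_1+df_2)$: the paper proves this scaling (its identity (\ref{Det-abcd})) by a Laplace expansion of the $2n\times 2n$ determinant into products of $2\times 2$ minors $W_{i,j}(f,g)$, each of which picks up the factor $(ad-bc)$, whereas your block-scalar observation (left multiplication by $\left[\begin{smallmatrix}aI_n&bI_n\\cI_n&dI_n\end{smallmatrix}\right]$, whose determinant is $(ad-bc)^n$ since the blocks commute) is an equally valid and arguably cleaner justification. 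The genuine divergence is in the middle step. The paper never forms the Bezoutian: from $fw=g$ and the Leibniz rule it writes the single factorization (\ref{MW}), $\bigl[\mathbf{T}(f,n);\mathbf{T}(g,n)\bigr]=\bigl[\mathbf{T}({\equiv}1,n);\mathbf{T}(w,n)\bigr]\cdot(\text{upper-triangular Toeplitz matrix of }f)$, takes determinants (the Toeplitz factor contributes $f(0)^{2n}=1$ for the fundamental pair, absorbing your $\det L_n=u_1(0)^n=1$ step), and extracts the Hankel matrix of $w=h_x{\circ}T_x{\circ}g_R$ as the row-reversed right half of $\mathbf{T}(w,n)$. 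You instead factor the Bezoutian coefficient array as $LHL^{\mathsf T}$ and then appeal to the classical Akhiezer--Krein Bezout--Sylvester determinant identity, which you defer to a Gram/confluent-Vandermonde factorization or a row reduction. That deferred identity is precisely the content of (\ref{MW}); your plan is complete only once that reduction is actually executed, but the reduction you sketch is standard and your truncation bookkeeping ($L$ lower-triangular Toeplitz, so leading principal blocks of $LHL^{\mathsf T}$ factor) is sound. What your framing buys is an explicit bridge to the moment-theoretic literature the paper only cites; what the paper's buys is a fully self-contained two-line determinant computation.

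Your instinct that the sign must be re-derived rather than quoted is vindicated, and your proposed $n=1,2$ check would in fact catch a slip in the paper: the permutation extracting the Hankel block is the reversal of $n$ rows, of parity $(-1)^{n(n-1)/2}$, not $(-1)^{n(n+1)/2}$ as printed (the two agree for even $n$ and differ for odd $n$). For instance at $n=1$ the stacked matrix for the fundamental pair is the identity, so the determinant is $+1=\mathcal{D}^{\,\vartheta,g_R}_1(x)$, while the printed sign would force $-1$; a direct computation at constant curvature for $n=3$ confirms $(-1)^{n(n-1)/2}$ as well.
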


\begin{proof}
On the one hand, if $f=f(t)$ and $g=g(t)$ are smooth, and
 $a,b,c,d$ any constants,
\begin{equation}\label{Det-abcd}
\det
\left[
\begin{array}{cc}
\mathbf{T}(a\,f+b\,g, \, n)(t) \\\\
\mathbf{T}(c\,f+d\,g, \,  n,)(t)
\end{array}
\right]=(a\,d-b\,c)^n\;\det
\left[
\begin{array}{cc}
\mathbf{T}(f,\, n)(t) \\\\
\mathbf{T}(g, \,  n)(t)
\end{array}
\right]
\,;\end{equation}
for,
by Laplace formula, we can expand $\det
\left[
\begin{array}{cc}
\mathbf{T}(f,\, n)(t) \\\\
\mathbf{T}(g, \,  n)(t)
\end{array}
\right]$
as a sum of  terms each of which  consists of a product of $n$
minors of order $2$:  one minor formed   with elements from rows $1$ and $n$,
another one with elements from  rows $2$ and $n+1$, and so on, up  to one minor formed
from rows $n$ and $2\,n$; but all these minors are of the form
$$  W_{i,\,j}(f,\,g)\,
  =\,\frac{1}{i!\,j!}\;\det{\left[
       \begin{array}{cc}
                f^{(i)}&f^{(j)}
               \\g^{(i)}&g^{(j)}
       \end{array}\right]}\,,$$
and for them it holds that \,$
W_{i,\,j}(a\,f + b \, g , \,
c\,f + d \,g)
\,=\,(a\,d-b\,c)\, W_{i,\,j}(f,\,g)
$\,.
On the other hand, if $y$ is any solution of (\ref{DEQA})
and $x$ any point in $\mathbb{R}$,
$$y(t)=y(x)\,f_{x,1}(t)+y^{\prime}(x)\,f_{x,2}(t)\,.$$
Thus,
in light of (\ref{Det-abcd}), to prove (\ref{Det-fla}) it
is enough to show that given any $x\in \mathbb{R}$,
for all integer $n\,\geq\,1$, with notation as in Definition \ref{Def-FPS},
\begin{equation}\label{Det-KAPPA-x0}
\det
\left[
\begin{array}{cc}
\mathbf{T}(\,f_{x,1}{\circ}T_{x}{\circ}\,g_R, \, n)(0) \\\\
\mathbf{T}(\,f_{x,2}{\circ}T_{x}{\circ}\,g_R,\,  n)(0)
\end{array}
\right]
\,=\,
(-1)^{\frac{n(n+1)}{2}}\,\mathcal{D}^{\,{\vartheta},\,g_R}_{n}(x)
.\end{equation}
But (\ref{Det-KAPPA-x0}) is shown by
a classical computation with power series and (\ref{FLA-H-R-SCHWARZ}) as follows.

If  $f=f(t)$ and $g=g(t)$ are smooth functions, $w=w(t)$ is given by
$f\,w = g$ and $\equiv1$ represents  the constant function with value $1$,
then using $g^{(n)}(t)\,=\,\sum_{k\,=\,0}^n
\binom{n}{k} f^{(n-k)}(t)\,w^{(k)}(t)$ one verifies that
\begin{equation}\label{MW}
\left[
\begin{array}{cc}
\mathbf{T}(f, \, n)(0) \\\\
\mathbf{T}(g, \,  n)(0)
\end{array}
\right]
=\underset{(*)}{\underbrace{
\left[
\begin{array}{cc}
\mathbf{T}(\equiv1, \, n)(0) \\\\
\mathbf{T}(w,\,  n)(0)
\end{array}
\right]}}
\, \left[
\begin{array}{cc}
\mathbf{T}(f, \, n)(0) \\\\
\mathbf{T}(x^n\,f, \,  n)(0)
\end{array}
\right]\,.
\end{equation}
For example, for $n\,=\,2$,
$$\left[
\begin{array}{cccc}
f(0)&f^{\prime}(0)
&\frac{f^{\prime\prime}(0)}{2!}
&\frac{f^{\prime\prime\prime}(0)}{3!}
\\\\
0&f(0)
&f^{\prime}(0)
&\frac{f^{\prime\prime}(0)}{2!}
\\\\
g(0)&g^{\prime}(0)
&\frac{g^{\prime\prime}(0)}{2!}
&\frac{g^{\prime\prime\prime}(0)}{3!}
\\\\
0&g(0)
&g^{\prime}(0)
&\frac{g^{\prime\prime}(0)}{2!}
\end{array}
\right]=
\left[
\begin{array}{cccc}
1&0
&0
&0
\\\\
0&1
&0
&0
\\\\
w(0)&w^{\prime}(0)
&\frac{w^{\prime\prime}(0)}{2!}
&\frac{w^{\prime\prime\prime}(0)}{3!}
\\\\
0&w(0)
&w^{\prime}(0)
&\frac{w^{\prime\prime}(0)}{2!}
\end{array}
\right]\,.\left[
\begin{array}{cccc}
f(0)&f^{\prime}(0)
&\frac{f^{\prime\prime}(0)}{2!}
&\frac{f^{\prime\prime\prime}(0)}{3!}
\\\\
0&f(0)
&f^{\prime}(0)
&\frac{f^{\prime\prime}(0)}{2!}
\\\\
0&0
&f(0)
&f^\prime(0)
\\\\
0&0
&0
&f(0)
\end{array}
\right]\,.
$$
Now, note that
the lower right $n\times n$ block of
(*) in (\ref{MW}), that is
the right  $n\times n$ block in
$$\\
\mathbf{T}(w,\,  n)(0)\,=\,
\left[\begin{array} {ccc}
w(0)
&\,&
\frac{w^{(n-1)}(0)}{(n-1)!}
\\
\, &\ddots &\,\\
 0 &\,&w(0)
\end{array} \,
      \big{|}
      \begin{array} {ccc}
   \frac{w^{(n)}(0)}{n!}
   &\,&
  \frac{w^{(2\,n-1)}(0)}{(2\,n-1)!}
   \\
   \, &\ddots &\,\\
   w^{\prime}(0)&\,&\frac{w^{(n)}(0)}{n!}
 \end{array} \right] \,,$$
is up to row permutation the $n\times\,n$ the Hankel matrix of
$w=w(t)$ at $t\,=\,0$, the
permutation needed being the one that interchanges
rows $n-k$ and $k$  of that block for every $k\,=\,1,\ldots, n-1$
which has a parity with sign  $(-1)^{\frac{n(n+1)}{2}}$\,.
Thus, taking determinants
in (\ref{MW})  with $$f\,=\,f_{x,1}{\circ}T_{x}{\circ}\,g_R\,,\;\;\;\;\;
g\,=\,f_{x,2}{\circ}T_{x}{\circ}\,g_R\,,\;\;\;\;\;
w\,=\,h_{x}{\circ}T_{x}{\circ}\,g_R$$
we see, by \ref{FLA-H-R-SCHWARZ}, that the resulting right-hand
side agrees with right-handide of equation (\ref{Det-KAPPA-x0}),
and by construction, so do
the left-hand sides.
\end{proof}


\subsection{Schwarzians in  $\mathbb{R}$ and in $TM$
for constant curvature.}\label{REM-K}\
\vskip.2cm
When the Gauss curvature $\sigma$ is constant equalt to $K$ on $M$, for each integer $n\geq 1$  the
function
$
\mathrm{v}\in TM \rightarrow\,
\displaystyle \frac{\mathrm{S}^{\,\sigma,\,g_R}_n(\mathrm{v})}{n!}
$
 is constant along level sets of $g(\mathrm{v}, \mathrm{v})$. Across level sets, due to Proposition \ref{HAT-SIGMA-HOMOGENEOUS}, it varies according to the rule
$\displaystyle
\lambda \mathrm{v}\in TM\,\longrightarrow  \,\lambda^{n-1} \frac{\mathrm{S}^{\,\sigma,\,g_R}_n(\mathrm{v})}{n!}
\;.$

To compute these Schwarzians for $R\,=\,\infty$
consider
 $f^{\prime\prime}+K F=0$ and
 let the function $s,t\mapsto V(x,\,x+t)$ be as in  Definition \ref{HOS}\,.
We have the independent solutions $f_1=\cos (\sqrt{K}t)$ and $f_2=\frac{1}{\sqrt{K}}\sin(\sqrt{K}t)$, hence
$h(t)=\frac{1}{\sqrt{K}}\tan (\sqrt{K}t)$,  interpreted henceforth as
$h(t)=t$ for $K=0$, and
 $h(t)=\frac{1}{\sqrt{|K|}}\tanh(\sqrt{|K|}t)$ for $K<0$.
Since for
any $a$ and $b$ in $\mathbb{C}$,
$\displaystyle \tan{(a+b)}\,=\,
\frac{\tan{a}\,+\,\tan{b}}{ 1-\tan{a}\tan{b}}$,
and by (\ref{Schwarzian-shift}), we have
\begin{equation}\label{VCC}
\sum_{n\,=\,0}^\infty\mathcal{S}^{\,K}_{n}(x)\frac{t^n}{n!}\,
=\,V(x,\,x+t)\,
=\,
\frac{1}{\sqrt{K}}\tan(\sqrt{K}t)\,,
\end{equation}
independent of $x$ as expected.
 Thus, the Schwarzians in $\mathbb{R}$ are the constants,
 for integer $n\,\geq\,1$,
\begin{eqnarray}\label{SN-CONSTANT-K}
\mathcal{S}^{\,K}_{2n-1}(x)\,&=&\,\mathcal{S}^{\,K}_{2n-1}=
\frac{1}{n}\,\binom{4^{n}}{2}\; |{B_{2\,n}}| \,K^{n-1}\;,\;\;\;\;
\\\notag {\mathcal{S}}_{2n}(x)\,&=&\,0\,,
\end{eqnarray} with  $B_n$ \cite{ABRAMOWITZ}
 the Bernoulli numbers \footnote
{For instance, $B_0\,=\,1$, \;$B_1\,=\,-1/2$,\;
  $B_2\,=\,1/6$,\;
 $B_4\,=\,-1/30$, \;$B_6\,=\,1/42$, \; $B_8\,=\,-1/30$,\;
  $B_{10}\,=\,5/66$ \,and \,$B_{12}\,=\,-691/2730$\,. So,
  the first few non-vanishing Schwarzians are
 \begin{eqnarray}\notag
 \mathcal{S}^K_1\,=1,\;\;\;
\mathcal{S}^K_3\;=\;2\,K, \;\; \;\mathcal{S}^K_5\,=\,16 \,K^2, \;\;\;
\mathcal{S}^K_7 \,=\,272 \,K^3,
 \;\;\;\mathcal{S}^K_9\,=\,7936\, K^4, \; \;\; \mathcal{S}^K_{11}\,=\,353792\,{K}^{5}\,.
 \end{eqnarray}
} defined  by $\frac {x}{e^x-1}\,=\,\sum_{n=0}^\infty \,B_n\,\frac {x^n}{n!}$.

  According to Definition \ref{DEF-HOS-TM-PIN},  the  Schwarzians in  $TM$
  are obtained via the substitutions
in (\ref{SUBSTITUTE}) which when $M$ has constant curvature $\sigma\,=\,K$
reduce to only  $K\,\mapsto \, \|\mathrm{v}\|^2 \,K$; thus for all $
\mathrm{v}\in TM$ and integer $n\,\geq \,1$,
\begin{eqnarray}\label{SN-CONSTANT-K-TM}
\mathrm{S}^{\,{K}}_{2n-1}(\mathrm{v})\,&=&\,
\frac{1}{n}\,\binom{4^{n}}{2} \,K^{n-1}\,|B_{2\,n}|
\,\|\mathrm{v}\|^{2n-2}\;,\\\notag
 \mathrm{S}^{\,{K}}_{2n}(\mathrm{v})\,&=&\,0\,.
\end{eqnarray}
 \begin{Rem}Formulas (\ref{SN-CONSTANT-K-TM}) in one direction and $\mathrm{S}^{\,\sigma}_{4}(\mathrm{v})\,=\,2\,\nabla_\mathrm{v}\sigma$ for all $v\in UM$ in the other, show that
 $2n$-th Schwarzians $\mathrm{S}^{\,\sigma}_{2n}(\mathrm{v})\,=\,0$ for all $\mathrm{v}\in UM$ and all integer $n\,\geq\,1$
if and only if $\sigma$ is constant.
\end{Rem}

Continuing with $\sigma\,=\,K$ a constant, and $R=\infty$ , so $g_R$
is the identity map, we
compute the function
\begin{equation}\label{DKN}
\mathrm{D}^{\,\sigma,\,g_R}_{n}(\mathrm{v}) \,=\,\mathrm{D}^{\,K}_n(\mathrm{v})\,.
\end{equation}

For $K=0$ the value of (\ref{DKN}) is $1$ for $n=1$ and $0$ for $n\geq 2$.

On the other hand, for $\|\mathrm{v}\|\,=\,1$, the function $\mathrm{D}^{\,K}_n(\mathrm{v})$ is the determinant of the Hankel matrix
associated
to the MacLaurin coefficients of  $\frac{1}{\sqrt{K}}\tan{\sqrt{K}\,x}$ for $K>0$ and to those of $\frac{1}{\sqrt{|K|}}\tanh(\sqrt{|K|}x)$ for $K<0$. The values of these determinants are surely known, but unfortunately  we were not able to locate this information
in the literature.
However the following is available to us (Formula (2.1) \cite{LAVOIE}),
\begin{equation}\label{KNOWN-DET}
\det{\left[ \frac{1}{(i+j-1)!}\right]_{i,\,j\,=\,1}^n}\;
=\;(-1)^{n(n-1)/2}\prod_{k=1}^{n}\frac{(n-k)!}{(2\,n-k)!}\,.
\end{equation}
We use this formula and Proposition \ref{DET-HANKEL-MOEBIUS} to compute our determinants. We get the following.

\begin{Prop}
\begin{eqnarray}\label{DET-HANKEL-TANGENT}\fbox{$\displaystyle
\mathrm{D}^{\,K}_{n}(\mathrm{v})\,=\,
\left(2\,\|\mathrm{v}\|\sqrt{K}\right)^{n(n-1)}\;
\prod_{k\,=\,1}^{n}\frac{(n-k)!}{(2\,n-k)!}$}\,.
\end{eqnarray}
\end{Prop}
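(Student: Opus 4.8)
The plan is to strip off the fibre scaling with homogeneity, reduce the computation to a Hankel determinant on $\mathbb{R}$, and then evaluate that determinant by choosing the solution pair in Proposition \ref{DET-HANKEL-MOEBIUS} cleverly. First I would invoke Proposition \ref{HAT-SIGMA-HOMOGENEOUS}: since $\mathrm{D}^{\,K}_n$ is homogeneous of degree $n(n-1)$ along the fibres, $\mathrm{D}^{\,K}_n(\mathrm{v})=\|\mathrm{v}\|^{n(n-1)}\,\mathrm{D}^{\,K}_n(\mathrm{v}/\|\mathrm{v}\|)$, and on a unit vector the substitution $\sigma^{(k)}\mapsto\|\mathrm{v}\|^2\nabla^k_{\mathrm{v}}\sigma$ collapses (for $\sigma\equiv K$) to $K\mapsto K$, so $\mathrm{D}^{\,K}_n(\mathrm{v}/\|\mathrm{v}\|)=\mathcal{D}^{\,K}_n$, the real-line determinant for the constant function $\vartheta\equiv K$. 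Thus it suffices to establish $\mathcal{D}^{\,K}_n=(2\sqrt{K})^{n(n-1)}\prod_{k=1}^{n}\frac{(n-k)!}{(2n-k)!}$, after which the factor $\|\mathrm{v}\|^{n(n-1)}$ restores the full formula.

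The crucial observation is that the natural quotient $\tfrac{1}{\sqrt{K}}\tan(\sqrt{K}\,t)$ only leads back to $\mathcal{D}^{\,K}_n$ itself, whereas Proposition \ref{DET-HANKEL-MOEBIUS} holds for \emph{any} independent pair, and the quotient of two exponential solutions is again an exponential. So I would apply the proposition at $x=0$ with $g_R=\mathrm{id}$ (as $R=\infty$) to the complex pair $y_1=e^{\sqrt{-1}\sqrt{K}\,t}$, $y_2=e^{-\sqrt{-1}\sqrt{K}\,t}$ of $f''+Kf=0$, with constant Wronskian $b:=W(y_1,y_2)=-2\sqrt{-1}\sqrt{K}$. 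Here the Möbius quotient is $w=y_2/y_1=e^{b t}$. Tracing the factorization (\ref{MW}) from the proof of Proposition \ref{DET-HANKEL-MOEBIUS} (with $y_1(0)=1$, so the triangular cofactor contributes $1$), the $2n\times 2n$ determinant $\det\mathbf{T}$ of Definition \ref{BlockT} collapses, up to the row-reversal sign, to the ordinary Hankel determinant of $w$, namely $\det\!\big[\tfrac{b^{\,i+j-1}}{(i+j-1)!}\big]_{i,j=1}^{n}$.

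It remains to evaluate this exponential Hankel determinant via the classical formula (\ref{KNOWN-DET}). Factoring $b^{\,i}$ out of row $i$ and $b^{\,j-1}$ out of column $j$ gives $\det\!\big[\tfrac{b^{\,i+j-1}}{(i+j-1)!}\big]=b^{\,n^2}\det\!\big[\tfrac{1}{(i+j-1)!}\big]=b^{\,n^2}(-1)^{n(n-1)/2}\prod_{k=1}^{n}\frac{(n-k)!}{(2n-k)!}$. Assembling the pieces and dividing by $W^{\,n}=b^{\,n}$ as the proposition prescribes, everything outside the product reduces to $b^{\,n(n-1)}$ up to sign; since $n(n-1)$ is even, $b^{\,n(n-1)}=(-2\sqrt{-1}\sqrt{K})^{n(n-1)}=(2\sqrt{K})^{n(n-1)}(\sqrt{-1}\,)^{n(n-1)}$ with $(\sqrt{-1}\,)^{n(n-1)}=(-1)^{n(n-1)/2}$. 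The cases $K<0$ and $K=0$ then follow by continuation in $K$ (or directly, the latter giving $1$ for $n=1$ and $0$ otherwise), and the homogeneity step yields the identity for every $\mathrm{v}$.

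The only real delicacy, and the step I expect to absorb most of the care, is the bookkeeping of signs and powers of $\sqrt{-1}$: three separate factors enter — the overall sign in Proposition \ref{DET-HANKEL-MOEBIUS}, the row-reversal sign in its collapse together with the $(-1)^{n(n-1)/2}$ of (\ref{KNOWN-DET}), and the factor $(-1)^{n(n-1)/2}$ coming from $b^{\,n(n-1)}$ — and the content of the calculation is that these collapse to $+1$, leaving the manifestly positive quantity $(2\sqrt{K})^{n(n-1)}$. To guard against an off-by-a-sign slip I would verify the cases $n=1$ (value $1$) and $n=2$ (value $K/3$, which one checks independently from $\mathcal{S}^{\,K}_1=1$, $\mathcal{S}^{\,K}_2=0$, $\mathcal{S}^{\,K}_3=2K$).
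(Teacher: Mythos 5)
Your proof is correct, and it reaches the result through the same two pillars as the paper --- Proposition \ref{DET-HANKEL-MOEBIUS} and the classical evaluation (\ref{KNOWN-DET}) --- but with a genuinely different implementation of the middle step. The paper keeps the real quotient $\frac{1}{\sqrt{K}}\tan(\sqrt{K}\,t)$, converts it by the complex rescaling $t\mapsto t/(2\sqrt{-1}\,\|\mathrm{v}\|\sqrt{K})$ into $2\tanh(t/2)$, writes $\tanh(t/2)=M(e^t-1)$ with $M(z)=z/(z+2)$, and then invokes the M\"{o}bius covariance (\ref{Det-abcd}) with $\det M=2$, so that the Hankel matrix that survives is literally $\left[1/(i+j-1)!\right]$. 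You instead feed the complex solution pair $e^{\pm\sqrt{-1}\sqrt{K}\,t}$ directly into Proposition \ref{DET-HANKEL-MOEBIUS}, so that the quotient is the single exponential $e^{bt}$ with $b=W(y_1,y_2)=-2\sqrt{-1}\sqrt{K}$, the Hankel matrix is $\left[b^{\,i+j-1}/(i+j-1)!\right]$, and all M\"{o}bius bookkeeping is absorbed into the Wronskian normalization $W^n=b^n$; one row/column extraction of powers of $b$ then lands on (\ref{KNOWN-DET}). The computations are equivalent, and yours is arguably the more economical application of the proposition (no explicit $M$, no half-angle identity). The one point you should make explicit is that Proposition \ref{DET-HANKEL-MOEBIUS} is stated for pairs of (real) solutions, while your pair is complex: this is harmless because (\ref{Det-abcd}) and (\ref{MW}) are polynomial identities in the constants and Taylor coefficients, hence valid over $\mathbb{C}$, but it deserves a sentence. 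Your homogeneity reduction via Proposition \ref{HAT-SIGMA-HOMOGENEOUS}, the continuation in $K$ for $K\le 0$ (both sides are polynomial in $K$ since $n(n-1)$ is even), and the closing numerics ($n=1$ gives $1$, $n=2$ gives $K/3$) are all sound.

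Your insistence on re-deriving the row-reversal sign and testing $n=1$ is not idle caution: the prefactor printed in (\ref{Det-fla}) is $(-1)^{n(n+1)/2}$, whereas the reversal of $n$ rows has sign $(-1)^{n(n-1)/2}$; testing $n=1$ with $y_i=f_{x,i}$ and $g_R=\mathrm{id}$ gives $\mathcal{D}_1=1$ while the printed formula yields $-W=-1$, so the printed constant is off by $(-1)^n$. In your assembly this cannot matter, exactly as you asserted: the proposition's constant and the reversal sign are the \emph{same} $\epsilon_n$ and enter as $\epsilon_n^2=1$, after which the cancellation $(-1)^{n(n-1)/2}\cdot(-1)^{n(n-1)/2}=1$ between (\ref{KNOWN-DET}) and $b^{\,n(n-1)}$ delivers the manifestly positive $(2\|\mathrm{v}\|\sqrt{K})^{n(n-1)}$, matching the paper's step $\left(\sqrt{-1}\,\right)^{n(n-1)}=(-1)^{n(n-1)/2}$.
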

\begin{proof}
Due to (\ref{VCC}) and our definitions,
\begin{eqnarray}
{\mathrm{S}}^{\,K}_{k}(\mathrm{v})\,&=&\,
\frac{1}{\|\mathrm{v}\|\,\sqrt{K}}\,
\tan^{(k)}{\left(\|\mathrm{v}\|\sqrt{K}\,t\right)}|_{t=0}
\\\notag\\\notag
&\overset{(*)}{=}&2\,\left(2\,\sqrt{-1}\,\|\mathrm{v}\|\sqrt{K}\right)^{k-1}\;
\tanh^{(k)}{\left(\frac{t}{2}\right)}|_{t=0}
\\\notag \\\notag
&=&2\,\left(2\,\sqrt{-1}\,\|\mathrm{v}\|\sqrt{K}\right)^{k-1}\;
\left(M\left(e^t-1\right)\right)^{(k)}|_{t=0}\notag
\end{eqnarray}
where in (*) we perform  a linear change of variable involving $\sqrt{-1}$
(hence the chance to hyperbolic
tangent) and  $M$ is the Mo\"{e}bius transformation defined by
$$M(z)\,=\,\frac{z}{z+2}\,.$$
Then, in light of  Proposition \ref{DET-HANKEL-MOEBIUS}, we compute
\begin{eqnarray}
{\mathrm{D}}^{\,K}_{n}(\mathrm{v})\,&=&\,
\det\left[\frac{{\mathrm{S}}^{\,K}_{i+i-1}(\mathrm{v})}{(i+j-1)!}
\right]_{i,\,j\,=\,1}^n
\\\notag \\\notag
\,&=&\,
\frac{2^n\,\left(2\,\sqrt{-1}\,
\|\mathrm{v}\|\sqrt{K}\right)^{n\,(n-1)}
}{\left(\det{M}\right)^n}\,
\det{\left[ \frac{1}{(i+j-1)!}\right]_{i,\,j\,=\,1}^n}\,.
\end{eqnarray}
Now (\ref{KNOWN-DET}) together with
$\det {M}=2$ and $\left(\sqrt{-1}\,\right)^{n(n-1)}\,=\,(-1)^{n(n-1)/2}$
yields (\ref{DET-HANKEL-TANGENT}).
\end{proof}
\begin{Rem}\label{REM-RANK-CC}
 Thus for  $K\,\neq\,0$, on $\mathrm{v}\in TM\setminus{M}$, we have
 $\mathrm{D}^{\,K}_{n}(\mathrm{v})\neq 0$ for all integer $n\geq 1$,
and hence
$$
 \mathrm{Rank}
 \left[\frac{{\mathcal{S}}^{K}_{i+j-1}(\mathrm{v})}{(i+j-1)!}
 \right]_{i,\,j\,=1}^{\infty}\,=\,\begin{cases}
 \infty \textrm{ for } \mathrm{v}\in TM\setminus{M}\\\\
 1 \textrm{ for } \mathrm{v}\in M \end{cases}\,.
$$ Moreover, for $\mathrm{v}\in {TM}{\setminus} M $,
\begin{eqnarray}
\operatorname{sign}
\left(\mathrm{D}^{\,K}_{n}(\mathrm{v})\right)\,=\,
\left(\operatorname{sign}\,(K)\right)^{\frac{n(n-1)}{2}}\,,
\end{eqnarray}
and so for $K\,<\,0$
the infinite quadratic  defined by $\displaystyle \left[\frac{{\mathcal{S}}^{K}_{i+j-1}(\mathrm{v})}{(i+j-1)!}
 \right]_{i,\,j\,=1}^{\infty}\,
$
is non-degenerate but not positive-definite, preventing the adapted structure to be defined in the entire $TM$.
\end{Rem}
Now, let us consider briefly the case $R$ finite.
Due to (\ref{VCC}), taking  $g_R$ as in (\ref{GR-PICK-LN}) the Schwarzians
${\mathcal{S}}^{\,K, \,g_R}_{n}(x)$ at any $x\in \mathbb{R}$
are equal to ${\mathcal{S}}^{\,K, \,g_R}_{n}(0)$, the MacLaurin coefficients in $t$ of
\begin{equation}\notag
\,-\frac{1}{\sqrt{K}}
\tan\left(\,\frac{\sqrt{K}\,R}{\pi}\,\ln {\left(1-\frac{\pi\,t}{R}\right)}\right)\,,
\end{equation}
which can  now be written down explicitly from the formulas (\ref{SN-CONSTANT-K-TM}) and those in Corollary \ref{COR-GR-LN-COMP}.

This way we get that the $g_R$-Schwarzians in $TM$ for constant curvature $K$ on $M$ are given, for $\mathrm{v}\in TM$,
\begin{equation}
\fbox{$\displaystyle
{\mathrm{S}}^{\,K, \,g_R}_{n}(\mathrm{v})
\;=\;\|\mathrm{v}\|^{n-1}\;
\sum_{l\,=\,1}^{\lfloor(n+1)/2\rfloor}
 \; \binom{4^{l}}{2}\;
 \left[\left[\begin{array}{c}n\\2\,l-1\end{array}\right]\right]\,\,
 \left({\frac{\pi}{R}}\right)^{n-2l+1}
 \;
\frac{|B_{2\,l}|}{l}\, \,K^{l-1}$}\,,
\end{equation}
where $\lfloor x\rfloor$ indicates the largest integer smaller or equal to $x$\,.

For example, for $K=0$ we have, on $TM$, for all $n\geq 1$
$${\mathrm{S}}^{\,K\,\equiv\,0, \,g_R}_{n}(\mathrm{v})\,=\,(n-1)!\,
\left(\frac{\pi\,\|\mathrm{v}\|}{R}\right)^{n-1}\,,$$
and hence we compute,
\begin{eqnarray}
{\mathrm{D}}^{\,K\,\equiv\,0,\,g_R}_{n}(\mathrm{v})
&=&\det\left[\frac{ {\mathrm{S}}^{\,K,\,g_R}_{(i+j-1)}(\mathrm{v})}{(i+j-1)!}
\right]_{i,\,j\,=\,1}^n\\\notag \\\notag
&=&
\,\det\left[\frac{1}{i+j-1}
\left(\frac{\pi\,\|\mathrm{v}\|}{R}\right)^{i+j-2}\right]_{i,\,j\,=\,1}^n
\\\notag\\\notag
&=&\,\left(\frac{\pi\,\|\mathrm{v}\|}{R}\right)^{n(n-1)}
\,\det\left[\frac{1}{i+j-1}
\right]_{i,\,j\,=\,1}^n
\\\notag\\\notag
&=&\,\left(\frac{\pi\,\|\mathrm{v}\|}{R}\right)^{n(n-1)}
\,\frac{\left[1!\,2!\cdots(n-1)!\right]^3}{n!\,(n+1)!\cdots (2n-1)!}\,.
\end{eqnarray}
Here the value of the last determinant is classical (Part VII, Problem $4$ \cite{POLYA-SZEGO}), and the indicated matrix is known as the Hilbert matrix .

\section{Schwarzians in $TM$ as moments\,}\label{SEC-SCHW-AS-MOMENTS}
There are some straight-forward restatements  of Theorem \ref{TH-EXTEND-TUBE-FINITE} in terms of classical \emph{moment sequences} all based on viewing the $g_R$-Schwarzians on $TM$ as a
family of sequences parametrized by $\mathrm{v}\in TM$   which gives
the assignment
\begin{equation}\label{MAP-H-SEQ}
\mathrm{v}\in TM
\rightarrow\,
\left\{\frac{\mathrm{S}^{\,\sigma,\,g_R}_n(\mathrm{v})}{n!}
\right\}_{n=1}^\infty\,,
\end{equation}
together with the fact that the determinantal conditions
(\ref{INTRO-DET-R-INEQ}) characterize \emph{non-negative sequences in $(-\infty, \,\infty)$}, which are precisely those
 sequences  $\{c_1,\,c_2,\, \cdots \}$ expressible as  power moments
of a non-negative  Borel  measure \cite{AKHIEZER-CTM}, \cite{SHOHAT-TAMARKIN}, \cite{WIDDER-LT}.
$$c_n\,=\,\int_{-\infty}^{\infty} t^{n-1} d\mu(t)\,.$$

  Thus, for each $\mathrm{v}\in TM$ we have a classical moment problem concerning the sequence in (\ref{MAP-H-SEQ}) which is  solvable for all $\mathrm{v}$ in $TM$ precisely when the adapted complex structure is defined in $T^RM$, and this happens whenever
the
 Schwarzian  functions are expressible as
 the integrals
\begin{equation}\label{INTRO-MOMENT-1}
\fbox{$\displaystyle
 \,{\mathrm{S}}^{\,\sigma, \,g_R}_{n}(\mathrm{v})\,=\,n!\,
 \int_{-1/\rho_{\mathrm{v}} }
 ^{1/\rho_{\mathrm{v}}}\,t^{n-1}\,
 d\mu^{\sigma,\,g_R}_{\mathrm{v}}(t)$}\,
 \end{equation}
  with a non-negative Borel measure $d\mu^{\sigma,\,g_R}_{\mathrm{v}}(t)$
and
$\rho_{\mathrm{v}}\,=\,\rho^{\sigma,\,g_R}_{\mathrm{v}}\,\in (0,\,\infty]$ both depending on $\mathrm{v}$, and of course the choice of $g_R$\,.

 Note that the Gauss curvature  $\sigma$  is determined by the integral
above
with $n=3$ computed point-wise by choosing  for each $x\in M$ any non-zero $\mathrm{v}\in T_xM$\,, and that the  measure, at any $\mathrm{v}\in TM$  integrates to the value $1$ since
${\mathrm{S}}^{\,\sigma, \,g_R}_{1}(\mathrm{v})\equiv 1$.

 Similarly, the existence of the adapted structure on $T^RM$ is equivalent to
the positivity of a classical functional defined by sequences \cite{AKHIEZER-CTM}.

In fact,  introduce
the sets of functions
\begin{eqnarray}\notag
\mathcal{P}\,&=&\,\{\;\mathrm{p}\colon TM\times
\mathbb{R} \,\to \,\mathbb{R}\,, \textrm{ where } \,\,\mathrm{p}(\mathrm{v},t)\textrm{ is polynomial in } t\}\,;
\\\notag
\mathcal{P}_+\,&=&\,\{\;\mathrm{p}\,\in\,\mathcal{P} \,,\,\mathrm{p}(\mathrm{v},t)\,\geq \,0 \,,\,\,\textrm{ for all } (\mathrm{v},t)\in TM\times\mathbb{R}\}\,.
\end{eqnarray}
Thus $\mathrm{p}$ is in $\mathcal{P}$ if and only if
$$\mathrm{p}(\mathrm{v},t)\,=\,\sum_{k\,=\,0}^n\,a_k(\mathrm{v})\,t^k$$ for some non-negative integer $n$
where $a_k\colon TM\to \mathrm{R}$ have no particular regularity assumption imposed on them.
Also let
\begin{eqnarray}\notag
\mathcal{F}\,&=&\,
\{\mathrm{f}\colon TM\,\to\, \mathbb{R} \}\,, \\\notag
\mathcal{F}_+\,&=&\,
\{\mathrm{f}\,\in\,\mathcal{F}\,,\,
\mathrm{f}(\mathrm{v})\,\geq\,0\,,\,\,\mathrm{v}\in TM  \}\,,
\end{eqnarray}
where no regularity is required here either.

Consider the map
\begin{equation}\label{FUNCT-S}{\mathfrak{S}}^{\,\sigma,\, g_R}\colon \mathcal{P}\to\mathcal{F}
\end{equation}
obtained
by extending  $\mathcal{F}$-linearly the  assignments
\begin{equation}\label{FUNCT-S-0}{\mathfrak{S}}^{\,\sigma,\, g_R}[t^{n-1}](\mathrm{v})\,
:=\,\frac{{\mathrm{S}}^{\,\sigma,\,g_R}_n(\mathrm{v})}{n!}\,, \,\,\, n\,=\,1,\,2,\,\cdots\,\,.
\end{equation}
Since for all $\mathrm{v}\in TM$ by definition ${\mathfrak{S}}^{\,\sigma,\, g_R}[t^0]\,=\, {\mathrm{S}}^{\,\sigma,\, g_R}_1(\mathrm{v})\,\equiv\,1$, we have always
$$ {\mathfrak{S}}^{\,\sigma,\,g_R}
\left[\mathcal{P}\right]\,=\,\mathcal{F}\,.$$
On the other hand, the following holds, by Theorem \ref{TH-EXTEND-TUBE-FINITE}.
\begin{Cor}\label{COR-LIN-POS}
 The adapted complex structure is defined on $T^RM$
if and only if
$${\mathfrak{S}}^{\,\sigma,\,g_R}\left[\mathcal{P}_+\right]
\,=\,\mathcal{F}_+\,.$$
\end{Cor}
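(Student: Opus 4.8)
The plan is to derive the corollary from Theorem \ref{TH-EXTEND-TUBE-FINITE} by passing through the classical dictionary relating Hankel positivity, moment representations, and positivity of a linear functional on non-negative polynomials. First I would dispose of one inclusion that holds unconditionally: given $\mathrm{f}\in\mathcal{F}_+$, the polynomial $\mathrm{p}(\mathrm{v},t):=\mathrm{f}(\mathrm{v})$, constant in $t$, lies in $\mathcal{P}_+$ and satisfies ${\mathfrak{S}}^{\,\sigma,\, g_R}[\mathrm{p}]=\mathrm{f}$ because ${\mathrm{S}}^{\,\sigma,\, g_R}_1\equiv 1$; hence $\mathcal{F}_+\subseteq{\mathfrak{S}}^{\,\sigma,\, g_R}[\mathcal{P}_+]$ always. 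Thus the claimed equality is equivalent to the single inclusion ${\mathfrak{S}}^{\,\sigma,\, g_R}[\mathcal{P}_+]\subseteq\mathcal{F}_+$, and it is this that I would prove equivalent to the existence of the adapted structure.

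For the forward implication I would assume the adapted complex structure is defined on $T^RM$. By Theorem \ref{TH-EXTEND-TUBE-FINITE} the determinants satisfy $\mathrm{D}^{\,\sigma,\,g_R}_n(\mathrm{v})\geq 0$ for every $n\geq 1$ and every $\mathrm{v}$, so for each fixed $\mathrm{v}$ the solution of the Hamburger moment problem already exploited in Theorem \ref{TH-EXTEND-FINITE-R} produces, as in (\ref{INTRO-MOMENT-1}), a non-negative Borel measure $d\mu^{\sigma,\,g_R}_{\mathrm{v}}$ of bounded support with ${\mathrm{S}}^{\,\sigma,\,g_R}_n(\mathrm{v})/n!=\int t^{\,n-1}\,d\mu^{\sigma,\,g_R}_{\mathrm{v}}(t)$. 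Since any $\mathrm{p}\in\mathcal{P}_+$ is in particular non-negative along each fiber $\{\mathrm{v}\}\times\mathbb{R}$, $\mathcal{F}$-linearity of ${\mathfrak{S}}^{\,\sigma,\, g_R}$ would give
\[
{\mathfrak{S}}^{\,\sigma,\, g_R}[\mathrm{p}](\mathrm{v})=\int_{-\infty}^{\infty}\mathrm{p}(\mathrm{v},t)\,d\mu^{\sigma,\,g_R}_{\mathrm{v}}(t)\geq 0,
\]
whence ${\mathfrak{S}}^{\,\sigma,\, g_R}[\mathcal{P}_+]\subseteq\mathcal{F}_+$.

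For the converse I would assume ${\mathfrak{S}}^{\,\sigma,\, g_R}[\mathcal{P}_+]\subseteq\mathcal{F}_+$, fix $\mathrm{v}_0$, an integer $n\geq 1$, and real constants $\xi_0,\dots,\xi_{n-1}$, and test against the \emph{globally} non-negative polynomial $\mathrm{p}(\mathrm{v},t):=\bigl(\textstyle\sum_{k=0}^{n-1}\xi_k\,t^k\bigr)^2$, which belongs to $\mathcal{P}_+$ precisely because its coefficients are constant in $\mathrm{v}$. Evaluating the resulting non-negative function at $\mathrm{v}_0$ gives
\[
\sum_{i,\,j=0}^{n-1}\xi_i\,\xi_j\,\frac{{\mathrm{S}}^{\,\sigma,\,g_R}_{i+j+1}(\mathrm{v}_0)}{(i+j+1)!}={\mathfrak{S}}^{\,\sigma,\, g_R}[\mathrm{p}](\mathrm{v}_0)\geq 0,
\]
so the $n\times n$ Hankel matrix $\bigl[{\mathrm{S}}^{\,\sigma,\,g_R}_{i+j-1}(\mathrm{v}_0)/(i+j-1)!\bigr]_{i,j=1}^{n}$ is positive semidefinite and in particular $\mathrm{D}^{\,\sigma,\,g_R}_n(\mathrm{v}_0)\geq 0$. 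Letting $n$ and $\mathrm{v}_0$ vary and invoking Theorem \ref{TH-EXTEND-TUBE-FINITE} would then place the adapted complex structure on $T^RM$.

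The only genuine content is the equivalence between the determinantal conditions $\mathrm{D}^{\,\sigma,\,g_R}_n\geq 0$ and the measure representation (\ref{INTRO-MOMENT-1}) — the classical Hamburger moment problem already used in Theorem \ref{TH-EXTEND-FINITE-R} — and everything else is bookkeeping. The subtle point I would take care with is the tension between the \emph{global} non-negativity built into $\mathcal{P}_+$ and the \emph{fiberwise} nature of the moment conditions: in the forward direction global non-negativity of $\mathrm{p}$ restricts to fiberwise non-negativity, which is all the integral representation needs, while in the converse direction it is enough to test against squares with constant coefficients, which are automatically globally non-negative yet already recover the full Hankel positivity at each $\mathrm{v}_0$. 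This reconciliation is the main thing to get right.
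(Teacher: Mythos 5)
Your proposal is correct and follows essentially the route the paper intends: the corollary is stated there as an immediate consequence of Theorem \ref{TH-EXTEND-TUBE-FINITE} together with the classical dictionary between Hankel positivity, Hamburger moment representations as in (\ref{INTRO-MOMENT-1}), and positivity of the functional ${\mathfrak{S}}^{\,\sigma,\,g_R}$ on non-negative polynomials, which is exactly what you spell out. Your explicit handling of the two inclusions — constants giving $\mathcal{F}_+\subseteq{\mathfrak{S}}^{\,\sigma,\,g_R}[\mathcal{P}_+]$ unconditionally, and squares with constant coefficients recovering the full Hankel quadratic form at each $\mathrm{v}_0$ — is just the standard argument made precise, matching the paper's use of ${\mathrm{S}}^{\,\sigma,\,g_R}_1\equiv 1$ and its citations to the classical moment literature.
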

In such case, for any $p\in \mathcal{P}$ and for all $\mathrm{v} \in TM$,
$$\mathfrak{S}^{\,\sigma, \,g_R}[p](\mathrm{v})\,=\,
\int_{-1/\rho^{\sigma,\,g_R}_{\mathrm{v}} }
 ^{1/\rho^{\sigma,\,g_R}_{\mathrm{v}}}\;p(\mathrm{v},\,t)\;
 d\mu^{\sigma,\,g_R}_{\mathrm{v}}(t)
 $$

\subsection{Estimate for the zeros of ${\mathrm{P}}^{\,\sigma, \,g_R}_n(\mathrm{v}, t)$.}\
\vskip.2cm
One final comment on this transplantation of the moment problem to $TM$ concerns the  expressions
\begin{equation}\label{ORTH-POLY-R}
{\mathrm{P}}^{\,\sigma, \,g_R}_n(\mathrm{v}, t)\,:=\,\det\left[\frac{{\mathrm{S}}^{\,\sigma, \,g_R}_{i+j}
(\mathrm{v})}{(i+j)!}\,-\,\frac{{\mathrm{S}}^{\,\sigma, \,g_R}_{i+j-1}
(\mathrm{v})}{(i+j-1)!}\,t\right]_{i,\,j \,=\,1}^{n} \, \in \; \mathcal{P},
\end{equation}for integer $n\,\geq \,1$.
When the support of measure at $\mathrm{v}\in TM$ does not consist of a finite number of points, that is when $\mathrm{R}^{\sigma,\,g_R}(\mathrm{v})\,=\,\infty$ (see \ref{DEF-RANK-TM}),  this infinite set of polynomials
constitute along the set  $\{\phi_t\mathrm{v},\;\; t\in \mathbb{R}\}\subset TM$ a point-wise orthogonal system with respect to ${\mathfrak{S}}^{\,\sigma,\, g_R}$,
that is,
\begin{equation}\label{PRODUCT}
{\mathfrak{S}}^{\,\sigma,\, g_R}\left[\,
{\mathrm{P}}^{\,\sigma, \,g_R}_n{\mathrm{P}}^{\,\sigma, \,g_R}_m\,\right](\phi_t\mathrm{v})\,=\,
\begin{cases}
 \,\mathrm{D}^{\,\sigma, \,g_R}_{n-1}(\phi_t\mathrm{v})\,
 \mathrm{D}^{\,\sigma, \,g_R}_n(\phi_t\mathrm{v}) \;\;\textrm{ if } n\,=\,m\,,\;\\\\\;0 \;\;\textrm{ if } n\,\neq \,m\;,
 \end{cases}
 \end{equation}
 for all integers $n, m\geq 1$ and all $t\in \mathbb{R}$, or with respect to the measure above
$$
(\ref{PRODUCT})\,=\,
\int_{-1/\rho^{\sigma,\,g_R}_{\phi_t\mathrm{v}} }
 ^{1/\rho^{\sigma,\,g_R}_{\phi_t\mathrm{v}}}\,
  \mathrm{P}^{\,\sigma,\,g_R}_n(\phi_t\mathrm{v},\,t)\; \mathrm{P}_m(\phi_t\mathrm{v},\,t)
 \;
 d\mu^{\sigma,\,g_R}_{\phi_t\mathrm{v}}(t)\,.
 $$

Concerning the relation of the support of this  measures and  the Gauss curvature $\sigma$ on $M$ note that, by Remark \ref{onM}, the zero section $M\subset TM$  is mapped to the sequence $\{1,\,0,\cdots\}$,
 and correspondingly, for all $\mathrm{v}\in M\,\subset \,TM$,
 \begin{equation}\label{Z-S-DELTA}
 d\mu^{\sigma,\,g_R}_{\mathrm{v}}(t)\,=\,\delta(t),\;\;\;
 \rho^{\sigma,\,g_R}_{\mathrm{v}}\,=\,\infty\,,
  \end{equation}
Off $M$, note that
by Proposition \ref{HAT-SIGMA-HOMOGENEOUS},
 for any $\lambda$, $$\mathrm{\lambda\,v}\in TM \rightarrow\,\{\lambda^{n-1}\,
\mathrm{S}^{\,\sigma,\,g_R}_n(\mathrm{v})/n!\}_{n=1}^\infty\,,$$
 so, $(\ref{MAP-H-SEQ})$ and the measure is determined by its restriction to the unit tangent bundle $UM$. Along this parameter set the sequence and the measure vary, unless the Gauss curvature $\sigma$ is constant on $M$.

For $\mathrm{v}\in UM$,
and for
$\displaystyle g_R(z)\,=\,-\frac{R}{\pi}\,\ln \left(1-\frac{\pi\,z}{R}\right)$, from the computations on the real line $\mathbb{R}$ given in Proposition \ref{PROP-SUPP-EST}, together with the method in the proof of Theorem \ref{TH-EXTEND-TUBE-FINITE} by which we relate those
computations to a unit speed geodesic, we readily obtain estimates in terms of the curvature  $\sigma$ of  $M$ for the support of $d\mu^{\,\sigma,\,g_R}_{\mathrm{v}}(t)$,
 when $\sigma$ is bounded as in Proposition \ref{PROP-SUPP-EST}.

Now, since for each $\mathrm{v}$ the measure $d\mu^{\,\sigma,\,g_R}_{\mathrm{v}}(t)$
maybe constructed \cite{AKHIEZER-CTM} \cite{FREUND}, \cite{CHIHARA}  by quadratures as a limit of measures defined from the zeros of the sequence $\{\mathrm{P}^{\,\sigma,\,g_R}_n(\mathrm{v},\,t)\}_{n=\,1,\,\cdots}$.

\begin{Prop}[The zeros of ${\mathrm{P}}^{\,\sigma, \,g_R}_n(\mathrm{v}, t)$]\label{PROP-SUPP-M}
If the adapted structure is defined on $T^RM$,
for each $\mathrm{v}\in UM$, the unit tangent bundle, the $n$ zeros of the polynomial in $t$
$$
{\mathrm{P}}^{\,\sigma, \,g_R}_n(\mathrm{v}, t)$$
are simple and real, and  all lie in the interval
$\displaystyle
  \left[\,-\frac{\pi}{R}\;,\;\;\frac{\pi}{R}\,\right]$ as long as $\sup_{x\in M} \sigma\,\leq 0$, or,
  provided that
$\sup_{x\in M} \sigma\,>\,0$\,,
  in the interval
$\displaystyle
  \left[{ -\frac{\pi}{R-\,R\,e^{-\frac{\pi^2}{2\,R\sqrt{\sup \sigma}}}}\;,\;
  \;\frac{\pi}{\displaystyle R-\,R\,e^{-\frac{\pi^2}{2\,R\sqrt{\sup \sigma}}}}\,}\right]$
which if $R\,=\,\infty$ reduces to
the interval
 $\displaystyle \left[{\displaystyle  -\frac{2\,\sqrt{\sup{\sigma}}}{\pi}\;,\;
  \;\frac{2\,{\sqrt{\sup{\sigma}}}}{\pi}} \right]$,
 .
\end{Prop}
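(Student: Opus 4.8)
The plan is to identify ${\mathrm{P}}^{\,\sigma,\,g_R}_n(\mathrm{v},\cdot)$ with the genuine orthogonal polynomials of the measure $d\mu^{\,\sigma,\,g_R}_{\mathrm{v}}$, to invoke the classical localization of the zeros of orthogonal polynomials inside the convex hull of the support, and then to insert the curvature estimate of Proposition \ref{PROP-SUPP-EST}.

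First I would fix $\mathrm{v}\in UM$ and take the unit-speed geodesic $\gamma$ with $\dot{\gamma}(0)=\mathrm{v}$. Since the adapted complex structure is defined on $T^RM$, Theorem \ref{TH-EXTEND-TUBE-FINITE} shows that $\vartheta(t):=\sigma(\gamma(t))$ belongs to $\mathfrak{L}_R$ and thus meets the hypotheses of part (1) of Theorem \ref{TH-EXTEND-FINITE-R}. Because $\|\mathrm{v}\|=1$, the correspondence (\ref{INTRO-S-R-TM-vs}) gives $\mathrm{S}^{\,\sigma,\,g_R}_n(\mathrm{v})=\mathcal{S}^{\,\vartheta,\,g_R}_n(0)$ for every $n$; hence the functional $\mathfrak{S}^{\,\sigma,\,g_R}$ at $\mathrm{v}$ is represented by the measure $d\mu^{\,\vartheta,\,g_R}_{0}$ supplied by the moment representation (\ref{SCHW-MOM}), and $d\mu^{\,\sigma,\,g_R}_{\mathrm{v}}=d\mu^{\,\vartheta,\,g_R}_{0}$. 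By the orthogonality relation (\ref{PRODUCT}), the ${\mathrm{P}}^{\,\sigma,\,g_R}_n(\mathrm{v},\cdot)$ are exactly the degree-$n$ orthogonal polynomials of this positive measure.

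Next I would invoke the classical fact that, for a positive Borel measure with at least $n$ points in its support, the degree-$n$ orthogonal polynomial has $n$ simple real zeros lying in the interior of the convex hull of the support. The standard argument suffices: were ${\mathrm{P}}^{\,\sigma,\,g_R}_n(\mathrm{v},\cdot)$ to change sign at fewer than $n$ points of the support, the product of the corresponding linear factors would be a polynomial of degree below $n$ whose integral against ${\mathrm{P}}^{\,\sigma,\,g_R}_n(\mathrm{v},\cdot)$ with respect to $d\mu^{\,\sigma,\,g_R}_{\mathrm{v}}$ would be nonzero, contradicting the orthogonality to all lower-degree polynomials recorded in (\ref{PRODUCT}). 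This places all $n$ zeros inside the smallest closed interval containing $\mathrm{Support}(d\mu^{\,\sigma,\,g_R}_{\mathrm{v}})$.

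Finally I would insert the estimate. Proposition \ref{PROP-SUPP-EST} bounds $\mathrm{Support}(d\mu^{\,\vartheta,\,g_R}_{0})$ in terms of $\sup_{t\in\mathbb{R}}\vartheta=\sup_{t\in\mathbb{R}}\sigma(\gamma(t))$; since the geodesic lies in $M$ we have $\sup_{t}\sigma(\gamma(t))\le\sup_{x\in M}\sigma$, and the half-width of the bounding interval increases with $\sup\vartheta$ (as $s$ grows the quantity $e^{-\pi^2/(2R\sqrt{s})}$ increases, so $R-Re^{-\pi^2/(2R\sqrt{s})}$ decreases and its reciprocal grows). Replacing $\sup\vartheta$ by the larger $\sup_{x\in M}\sigma$ therefore only widens the interval and yields precisely the bounds in the statement, with the limit $R=\infty$ recovering the stated endpoints. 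The principal obstacle is not depth but bookkeeping: correctly transporting the line measure $d\mu^{\,\vartheta,\,g_R}_{0}$ to the fiber measure $d\mu^{\,\sigma,\,g_R}_{\mathrm{v}}$ at $\mathrm{v}\in UM$ and matching their supports, after which the zero-localization and the monotonicity in $\sup\sigma$ are routine.
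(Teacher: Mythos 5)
Your proposal is correct and takes essentially the same route as the paper: Proposition \ref{PROP-SUPP-M} is stated there without a formal proof, resting on the immediately preceding remarks that identify ${\mathrm{P}}^{\,\sigma,\,g_R}_n(\mathrm{v},\cdot)$ via (\ref{PRODUCT}) as the orthogonal polynomials of $d\mu^{\,\sigma,\,g_R}_{\mathrm{v}}$, invoke the classical localization of their zeros in the convex hull of the support, and import the support estimate of Proposition \ref{PROP-SUPP-EST} along unit-speed geodesics exactly as in the proof of Theorem \ref{TH-EXTEND-TUBE-FINITE}. Your explicit sign-change argument and the monotonicity check that replacing $\sup_{t}\sigma(\gamma(t))$ by the larger $\sup_{x\in M}\sigma$ only widens the interval are accurate fillings of the steps the paper leaves implicit.
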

\begin{Rem}
As an illustration, for Gauss curvature $\sigma\equiv0$
and $g_R\,=\,-\frac{R}{\pi}\,\ln \left(1-\frac{\pi\,z}{R}\right)$ the assignment (\ref{FUNCT-S-0}) becomes
$$
{\mathfrak{S}}^{\,\sigma,\, g_R}[t^{n-1}](\mathrm{v})\,=\,
\frac{1}{n}\,
\left(\frac{\pi\,\|\mathrm{v}\|}{R}\right)^{n-1}\,,
$$ and thus (\ref{INTRO-MOMENT-1}) is satisfied with the measure given, for $\mathrm{v}\in TM\setminus M$, by
$$d\mu^{\,K\,\equiv\, 0, \,g_R}_{\mathrm{v}}(t)\,=\,
\begin{cases}
{\displaystyle \frac{R}{\pi\,\|\mathrm{v}\|} \,dt \;\;\textrm { if  }\;\;  0\,\leq \,t\,\leq\,
\frac{\pi\,\|\mathrm{v}\|}{R}}\,;\\\\
{\displaystyle\;\;
0 \;\;\textrm{ otherwise } }\,,
\end{cases}
$$
and by (\ref{Z-S-DELTA}) along $M\subset TM$.

Thus, along $\|\mathrm{v}\|=\frac{R}{\pi}$, the polynomials
 $\mathrm{P}^{\,K\,\equiv\, 0, \,g_R}_n(\mathrm{v}, t)$, orthogonal with respect to the measures above, agree up to constant factors with the shifted Legendre polynomials,
 $$P_n(x)=\frac{1}{2^nn!}((u^2-1)^n)^{(n)}|_{u=2x-1}\,.$$

For $\mathrm{v}\neq 0$, the  $\mathrm{P}^{\,K\,\equiv\, 0, \,g_R}_n(\mathrm{v}, t)$, have the variables rescaled  so that for $\mathrm{v}$ along $M\subset TM$ they degenerate to $\mathrm{P}^{\,K\,\equiv\, 0, \,g_R}_1(\mathrm{v}, t)\,=\,t$ and all the other polynomials equal to zero.
\end{Rem}

\section{Acknowledgements}

I introduced the main results in this paper at the KIAS/SNU International Conference in Symplectic and Complex Geometry in honor of Professor Dan Burns Jr. in his 65th Birthday,  that took place from February 28 to March 3,  2011, at the Gwanak Campus of the Seoul National University, Korea.
I am grateful to Professor Burns for introducing me to the topic  of geometric complexifications and for his continuous dedication to the subject.

It is a great pleasure to thank the organizers of that conference in Seoul, Professor Jeongseog Ryu, Professor
Chong-Kyu Han, and Professor
Jong Hae Keum, for the invitation and for their kind hospitality. I also take the opportunity to thank Dean Captain Brad Lima for facilitating my attendance.


\begin{thebibliography}{999}

\bibitem{ABRAMOWITZ} Abramowitz M. \& Segun I.  {Handbook of mathematical functions} (1968) Fifth printing. Dover, N.Y.
\bibitem{RMA-QJM-09} Aguilar, R. :{\em The adapted complexification of a two-sphere with a Liouville metric}. Quart. J. Math. (2009)
\bibitem{RMAEllipsoid} \bysame : {\em The adapted
complexification of an ellipsoid}.
Int. J. Math. (1) 18(2007) 43-68.
\bibitem{AKHIEZER-CTM}
Akhieser, N. :{\em The classical theory of moments and some applications to analysis.} Translation 1965,   Oliver and Boyd, Edinburgh and London.
\bibitem{AKHIEZER-KREIN}
Akhieser, N. \& Krein, N.
{\em Some questions in the theory of moments.} (1962)
Translations of mathematical momographs. Volume 2.
A.M.S. Providence, R.I.
\bibitem{Bendat-Sherman}
Bendat, J. \& Sherman, S: {\em Monotone and convex operator functions.}
Trans. Amer. Math. Soc. 79 (1955), 58-71.
\bibitem{BurnsMA}
Burns, D. : {\em Curvatures of Monge-Amp\`{e}re foliations and parabolic manifolds.}  Ann. Math. (2) 115(1982) 
\bibitem{CHIHARA} Chihara T. : {\em An introdution to orthogonal Polynomials.} (1978) Gordon \& Breach  N.Y. Dover Reprint, 2011.
\bibitem{DONOGHUE} Donoghue W.: {Monotone Matrix Functions and Analytic Continuation}. Grundlehren Der Mathematischen Wissenschaften Series, Vol 207.
(1974) Springer-Verlag, Berlin.
\bibitem{GANTMACHER2} Gantmacher F., {\em Applications of the theory of matrices.} (1959) Interscience Publishers. Dover Reprint 2005.
\bibitem{GLLST}
Guillemin, V. \& Stenzel, M. :
 {\em Grauert tubes and the homogeneous Monge--Amp\`{e}re equation
 I.} J. Diff. Geometry, 34(1991), 561-570.
\bibitem{KS} Kim S. \& Sugawa T. :
{\em Invariant Schwarzian derivatives of higher order.} Complex Anal. Oper. Theory. Published on line; 27 May 2010.
\bibitem{Klingenberg}Klingenberg, W. :
 {\em Riemannian Geometry.} (1982) de
Gruyter, Berlin.

\bibitem{HALL-KIRWIN} Hall, B. \& Kirwin, W.
{\em Adapted complex structures and the geodesic flow.}
arXiv:0811.3083v2 [math.SG] (2010).
\bibitem{LAVOIE} Lavoie, J. :
{\em On the Evaluation of Certain Determinants.}
Math. Comp. (18) 1964, 653-659.
\bibitem{LEMPREG} Lempert, L. :  {\em Canonical complex structures
on the tangent bundle of Riemannian manifolds.} Complex analysis and
geometry, Ancona V. \& Silva A.,  Editors. Plenum Press, (1993) NY.
\bibitem{LESZ} Lempert, L. \& Sz\H{o}ke, R. : {\em Global solutions of
the homogeneous complex Monge--Amp\`{e}re equation and complex
structures on the tangent bundle of Riemannian Manifolds.} Math.
Ann. 290 (1991), 689-712.
\bibitem{LESZ-2010}\bysame :
{\em A new look at adapted complex structures.}	 arXiv:1004.4069v1 [math.CV] (2010).
\bibitem{PW} Patrizio, G. \& Wong, P.M. : {\em Stein manifolds
with compact symmetric center.} Math. Ann. 289 (1991) 355-382.
\bibitem{POLYA-SZEGO}
P\'{o}lya G.  \& Szeg\H{o} G. : {\em Problems and Theorems in Analysis II} (1976) Springer-Verlag, Berlin, Heidelberg.
\bibitem{SHOHAT-TAMARKIN}
Shohat, J. \& Tamarkin, J. :{\em The Problem of Moments} (1943) American Mathematical Society, New York.
\bibitem{STOLL} Stoll, W.
{\em The characterization of strictly parabolic manifolds.} Annali della Scuola Normale Superiore di Pisa - Classe di Scienze, Sér. 4, 7 no. 1 (1980), 87-154
\bibitem{SU-JEN-KAN} Kan, S-J. :
{\em Complete Ricci-flat metrics through a rescaled exhaustion.} Preprint. arXiv:1009.3705v1 [math.DG] (2010).
\bibitem{SZOKE-SR}
Sz\H{o}ke, R. : {\em Complex structures on tangent bundles of
Riemannian manifolds.} Math. Ann. 291 (1991), 409-428.
\bysame
Sz\H{o}ke, R. :
Ph.D. Thesis Univesity of Notre Dame  (1991).
\bibitem{Tamanoi}Tamanoi H. :
\emph{Higher Schwarzian operators and combinatorics of Schwarzian
derivative.} Math. Ann. 305 (1996),  127-151.
\bibitem{TOMASSINI-VENTURINI}
Giuseppe Tomassini,G. \& Venturini, S.
{\em Adapted complex tubes on the symplectization of pseudo-Hermitian manifolds.} arXiv:1002.4558v1 [math.CV](2010)
\bibitem{WIDDER-LT} Widder, D. :{\em The Laplace Transform} (1946) Princeton . Dover Reprint 2011.
\end{thebibliography}
\end{document}